\newtheorem{ass}{Assumption}
\def\cal{\mathcal}
\newcommand\indiq{{\bf 1}}
\renewcommand\r{\mathbb{R}}
\newcommand\R{\r}
\newcommand\E{\mathbb{E}}
\newcommand\W{\mathcal{W}}
\def\P{{\mathbb P}}
\newcommand\D{D}
\newcommand\eg{g^\varepsilon}
\begin{document}
\title[Fluctuations for mean field limits of interacting neurons]{Fluctuations for mean field limits of interacting systems of spiking neurons}
\author{Eva L\"ocherbach}
\address{E. L\"ocherbach: SAMM, Statistique, Analyse et Mod\'elisation
Multidisciplinaire, Universit\'e Paris 1 Panth\'eon-Sorbonne, EA 4543 et FR
FP2M 2036 CNRS, France.\\
E-mail: eva.locherbach@univ-paris1.fr}
\maketitle
\begin{abstract}
We consider a system of $N$ neurons, each spiking randomly with rate depending on its membrane potential. When a neuron spikes, its potential is reset to $0$ and all 
other neurons
receive an additional amount $h/N$ of potential, where $ h > 0$ is some fixed parameter. In between successive spikes, each neuron's potential undergoes some leakage at constant rate $ \alpha. $ While the propagation of chaos of the system, as $N \to \infty$, to a limit nonlinear
jumping stochastic differential equation has already been established in a series of papers, see \cite{aaee}, \cite{evafournier}, \cite{evapierre}, the present paper is devoted to the associated central limit theorem. More precisely we study the measure valued process of fluctuations at scale $ N^{-1/2}$ of the empirical measures of the membrane potentials, centered around the associated limit. We show that this fluctuation process, interpreted as c\`adl\`ag process taking values in a suitable weighted Sobolev space, converges in law to a limit process characterized by a system of stochastic differential equations driven by Gaussian white noise. We complete this picture by studying the fluctuations, at scale $ N^{-1/2}, $ of a fixed number of membrane potential processes around their associated limit quantities, giving rise to a mesoscopic approximation of the membrane potentials that take into account the correlations within the finite system.

\textbf{Keywords}: Convergence of fluctuations, weighted Sobolev spaces, systems of interacting neurons, Piecewise deterministic Markov processes, Mean field interactions.

\textbf{AMS Classification 2010:} 60G55; 60F05: 60G57; 92B20
\end{abstract}

\section{Introduction}
In the present paper we study the fluctuations for the mean field limits of systems of interacting and spiking neurons as the number of neurons tends to infinity. For any fixed size $N,$ the system is characterized by the vector of potential values of the $N$ neurons, $X^N = (X^N_t)_{t \geq 0 }. $ Here, for any time $t \geq 0, $ $X^N_t = (X^{N, 1 }_t, \ldots , X^{N, N}_t )$ and $ X^{N, i }_t \geq 0 $ denotes the membrane potential of neuron $i$ at time $t .$ The process $ X^N $ is a Markov process having generator $L^N$ given by  
\begin{equation}\label{eq:dynintro}
L^N  \varphi ( x) = - \alpha \sum_{i=1}^N \partial_{x^i} \varphi (x) x^i + \sum_{i=1}^N f (x^i)  \left( \varphi ( x + \sum_{j\neq i } \frac{h}{{N}} e_j - x^i e_i  ) - \varphi ( x) \right) ,
\end{equation}
for any smooth test function $ \varphi.$  In the above equation, $ x = ( x^1, \ldots, x^N) \in \r_+^N , $ and $ e_i, 1 \le i \le N, $ denotes the $i-$th unit vector in $ \r^N.$ $h > 0 $ is a positive constant, the synaptic weight, and $ \alpha > 0 $ the leakage time constant. The function $ f : \R_+ \to \R_+$ is the jump rate function. Since $h > 0, $ we are working in the purely excitatory case, such that all membrane potentials take values in $ \r_+.$

The above system of interacting neurons (or slight variations of it) and its mean field limits have been studied in a series of papers, starting with \cite{aaee}, \cite{evafournier} and \cite{robert-touboul}, followed by \cite{tanre1}--\cite{tanre2} which are devoted to the longtime behavior of the associated nonlinear limit process. Spatially structured versions of these convergence results have moreover been obtained in \cite{duarte} and \cite{duartechevallier}. All these papers establish the propagation of chaos property implying that,  in the limit model, different neurons are independent. 
The present paper completes this study by presenting the associated central limit theorem. In particular we will be able to present a mesoscopic approximation for each neuron's potential that takes care of the correlations between different neurons within finite, but large, systems, giving a precise form of the factor of common noise.

\subsection{The model}
To introduce the precise model, consider a family of i.i.d. Poisson measures $(\pi^i(ds, dz ))_{i \geq 1 }$ on $\r_+  \times \r_+$ having intensity measure $ds dz$ each, as well as an i.i.d. family $(X^{i}_0)_{i \geq 1 }$ of $\r_+ $-valued random variables, independent of the Poisson measures, distributed according to some probability measure $ g_0 $ on $ \r.$ 
Then we may represent each neuron's potential as
\begin{multline}\label{eq:dyn}
X^{N, i}_t =  X^{i}_0 - \alpha  \int_0^t  X^{N, i}_s    ds
+ \frac{h}{{N}}\sum_{ j= 1, j \neq i }^N \int_{[0,t]\times\r_+} \indiq_{ \{ z \le  f ( X^{N, j}_{s-}) \}} \pi^j (ds,dz )  \\
- \int_{[0, t ] \times \r_+} X^{N, i }_{s- } \indiq_{\{ z \le f ( X^{N, i }_{s- } ) \}} \pi^i (ds, dz) , 1 \le i \le N.
\end{multline}

It has been shown in \cite{aaee}, \cite{evafournier}  and and \cite{robert-touboul} that under appropriate assumptions on $f$ and $g_0,$ the asymptotic evolution, as $ N \to \infty , $ of the membrane potential processes can be described as solution of the following infinite i.i.d. system of non-linear stochastic differential equations 
\begin{equation}\label{eq:dynlimit}
\bar X^{i}_t =  X^{i}_0 - \alpha  \int_0^t  \bar X^{ i}_s    ds
+ h \int_0^t \E ( f ( \bar X^i_s)) ds  \\
- \int_{[0, t ] \times \r_+} \bar X^{ i }_{s- } \indiq_{\{ z \le f (\bar  X^{ i }_{s- } ) \}} \pi^i (ds, dz) , \, i \geq 1.
\end{equation}

Throughout this paper we strengthen the conditions of \cite{evafournier} and impose the following conditions.

\begin{ass}\label{ass:1}
$ f \in C^6 ( \r_+, \r_+) $ is convex and increasing  such that $ f( x) > 0 $ for all $ x > 0.$ Moreover there exists some $ \alpha \geq 1 $ such that for all $ 0 \le k \le  6 ,$
$$ \sup_{x \geq 1 } [ f' / f + f''/f'] ( x) < \infty \mbox{ and } \sup_{x \in \R_+} \frac{ |f^{(k)}(x)|}{ ( 1 + |x|^\alpha ) } < \infty .$$
\end{ass}
 
Concerning the distribution of the initial potential values we impose
\begin{ass}\label{ass:2}
We suppose that $g_0 $ is compactly supported and possesses a probability density $ g_0 ( x)$ which belongs to $  C^1 ( \r_+, \r_+).$ 
\end{ass}
Then by \cite{aaee} and \cite{evafournier}, there exists a unique strong solution both for \eqref{eq:dyn} and for \eqref{eq:dynlimit}. Moreover,  constructing $ X^{N, i } $ and $ \bar X^i $ using the same underlying Poisson random measure $ \pi^i ,$ we have for any $ T> 0$ and for any $ 1 \le i \le N, $  
\begin{equation}\label{eq:prop1}
\sup_{ t \le T} \E ( | X_t^{N, i } - \bar X_t^i | ) \le \frac{C_T}{\sqrt{N}} ,
\end{equation}
where the constant $C_T $ does not depend on $N.$ Introducing 
\begin{equation}\label{eq:gt}
g_t = {\mathcal L} ( \bar X^1_t) , 
\end{equation}
and  the empirical measure of the finite system together with the associated projection onto time $t, $
\begin{equation}\label{eq:muN}
 \mu^N = \frac1N \sum_{ i=1}^N \delta_{X^{N, i } }, \;  \mu^N_t = \frac1N \sum_{ i=1}^N \delta_{X^{N, i }_t }, 
\end{equation}  
we also have 
\begin{equation}\label{eq:prop2}
\sup_{ t \le T} \E ( \W_1 ( \mu^N_t, g_t ) ) \le \frac{C_T}{\sqrt{N}}.
\end{equation}
Here, the Monge-Kantorovich-Wasserstein distance $ \W_1 ( \mu , \nu) $ between two probability measures $\mu$ and 
$\nu$ on $\R_+$ with finite expectations is defined by $\W_1(\mu,\nu)=\inf\{\E[|U-V|]$, ${\mathcal L}(U)=\mu$ and 
${\mathcal L}(V)=\nu\}$.

Finally, let us mention that $ g_t = {\mathcal L} ( \bar X^i_t) $ is solution of a nonlinear PDE which in its strong form reads as 
$$ \partial_t g_t ( x) = [ \alpha x - h p_t ] \partial_x g_t(x) + ( \alpha - f(x) ) g_t ( x), t \geq 0, x > 0 , \; p_t = \int_0^\infty f ( x) g_t (x) dx,  $$
starting from the initial value $g_0, $ together with the boundary condition 
$ g_t (0)  = 1/h $ for all $t > 0.$  

As a consequence of \eqref{eq:prop2}, interpreting $ \mu^N_t$ as random variable in the space $ \cal P ( \r_+) $ of all probability measures on $ \r_+,$  we have convergence in probability 
$  \mu^N_t \to g_t ,$
as $ N \to \infty, $ and the rate of convergence is at least $N^{-1/2}. $ It is therefore natural to study the associated process of fluctuations, given by 
\begin{equation}\label{eq:etan}
\eta_t^N = \sqrt{N} ( \mu_t^N - g_t ) ,
\end{equation}
together with the fluctuations of the processes of membrane potentials 
\begin{equation}\label{eq:utnk}
U_t^{N, i } = \sqrt{N} ( X_t^{N, i } - \bar X^i_t) , 1 \le i \le n ,
\end{equation}
for a fixed number of $n$ neurons, where for each $ 1 \le i \le n, $ $ X^{N, i } $ and $ \bar X^i $ are constructed according to the so-called Sznitman coupling (see \cite{sznitman}): They are defined on the same probability space, starting from the same initial condition $X^i_0$ and using the same underlying Poisson random measure $ \pi^i ,$ for each $ 1 \le i \le n.$  

In the present paper we prove convergence in law of the sequence of processes $ (( U^{N, 1 }, \ldots , U^{N, n } ) , \eta^N) $ to a limit process $ (( \bar U^{1 }, \ldots , \bar U^{n } ) , \bar \eta) ,$ as $ N \to \infty ,$ for any fixed $n.$  The limit process $\bar \eta ,$ interpreted as distribution acting on appropriate test functions,  follows an infinite dimensional differential equation stated precisely in \eqref{eq:bareta} below. Moreover, for each $ 1 \le i \le n, $ the limit process $ \bar U^i $ follows an Ornstein-Uhlenbeck dynamic with variable length memory, that is, for any $ t \geq 0, $ 
\begin{equation}\label{eq:barui}
  \bar U^i_t = - \int_0^t \alpha  \bar U^i_s ds +h \int_0^t \bar \eta_s (f) ds   -\int_{[0, t] \times \r_+} \bar U^i_{s-} \indiq_{\{ z \le f ( \bar X^{i }_{s-} ) \}} \pi^i  ( ds, dz) + h M_t  .
\end{equation}
Here, $ (M_t )_t $ is a Gaussian martingale having quadratic variation 
$$ <M>_t = \int_0^t g_s ( f) ds  = \E \int_0^t f ( \bar X^i_s) ds .$$ 
In \eqref{eq:barui}, the presence both of this Gaussian martingale and of the integral of the fluctuations of the spiking rate, $ \int_0^\cdot \bar \eta_s ( f) ds, $ induces a factor of common noise explaining the correlations between different neurons in the finite system.  

As a consequence, we obtain the following second order error correction to the mean field approximation 
\begin{equation}\label{eq:meso}
 X_t^{N, i } = \bar X_t^i + \frac{1}{\sqrt{N}} \bar U_t^i , \mbox{ where  }  \; \bar U^i_t = h \int_{\bar L_t^i }^t e^{ - \alpha ( t-s) }\bar \eta_s (f) ds  + h \int_{\bar L_t^i }^t e^{ - \alpha ( t-s) } d M_s ,  
\end{equation} 
with $ L_t^i = \sup \{ s \le t: \Delta \bar X^i_s \neq 0 \} $ the last spiking time of neuron $i$ in the limit process, before time $t,$ with $ \sup \emptyset := 0.$  
 
While in \eqref{eq:barui} above, the convergence of $ (U^{N, 1 }, \ldots , U^{N, n } )$ has to be understood as convergence of stochastic processes with c\`adl\`ag trajectories, that is, of random variables taking values in $D ( \r_+, \R^n ), $ we did not specify so far in which space the convergence of the rescaled empirical measures $\eta^N$ takes place. Following the Hilbertian approach introduced in  \cite{ferland} and \cite{fernandez} and then applied to the framework of point processes in \cite{chevallier}, throughout this paper  we interpret $ \eta^N$ as stochastic process taking values in a suitable distributional space which is the dual of some weighted Sobolev space of test functions. 
The regularity of test functions we need to impose is related to the order up to which we have to develop the error terms that appear when replacing the contribution of small jumps (i.e., the last term appearing in \eqref{eq:dynintro}) by the associated limit drift. 
Moreover, since the finite size process does not take values in a compact set, we need to work with a Sobolev space supported by $ \R_+.$ Finally, it turns out that we have to include constant functions into our class of admissible test functions, as well as  the firing rate function $f$ which is of polynomial growth. Therefore we are led to work with weighted Sobolev spaces, where the weights are chosen to be polynomial, of power $ p > \alpha + \frac12  ,$ where $\alpha $ is the growth rate of $f$ and its derivatives (see Assumption \ref{ass:1}).

The approach used in this article follows closely the study of fluctuations for McKean-Vlasov diffusions in \cite{fernandez} and the adaptation of this work to the framework of age-dependent Hawkes process proposed in \cite{chevallier}. The main difference with respect to \cite{fernandez} is that, as in \cite{chevallier}, the limit processes $ \bar X^i $ and $ \bar U^i $ remain jump processes; the big jumps induced by spikes survive also in the limit process. 
The main difference with respect to \cite{chevallier} is the following. Being interested in age-dependent Hawkes processes, in \cite{chevallier}, the limit process undergoes a deterministic drift given by $ b (x ) = 1 . $ This trivially implies good coupling properties. 
In our model however, the time dependent drift of the limit process is given by $ - \alpha x + h g_t ( f)  $ at time $t$ and depends both on the position $x,$ but also on the average spiking rate of the system. This makes the study of coupling more complicated, which is one of the main reasons why it is more difficult to prove the uniqueness of the limit equation in the present frame. In particular, to prove the uniqueness, we do also have to establish regularity properties of the time inhomogeneous semigroup  associated to the limit process \eqref{eq:dynlimit} which is non-diffusive and associated to a transport equation. We rely on Girsanov's theorem for jump processes to tackle this problem, see Proposition \ref{prop:Pst} below.

\subsection{General notation.} 
The space of bounded functions of class $ C^k, $ defined on $ \r_+,$ with bounded derivatives of each order up to order $k,$ is denoted by $ C_b^k.$ 
$C^\infty_c $ denotes the space of infinitely differentiable functions defined on $ \R_+,$ having compact support. 
The space of c\`adl\`ag functions defined on $ \r_+$ and taking values in some Polish space $E$ is denoted by $ D ( \r_+, E).$ 
If $ \mu $ is a measure on $E$ and $ \varphi : E \to \r $ measurable and integrable, we write $ < \mu, \varphi > := \int_E \varphi d \mu.$

Throughout this paper we work with the canonical filtration $( {\mathcal F}_t)_{t \geq 0} $ where $ {\mathcal F}_0= \sigma \{ X^i_0, i \geq 1 \} $ and $ {\mathcal F}_t =  \sigma \{ X^i_0, i \geq 1 , \pi^j ( A) : A \subset [0, t ] \times \r_+, j \geq 1\}.$ 

Finally, $C$ denotes a constant that may change from one occurence to another, even within one line, and for the jump rate function $f$ introduced above and any $x = ( x^1, \ldots , x^N)  \in \R^N, N \geq 1, $ we shall write 
\begin{equation}\label{eq:barf}
\bar f ( x) = \sum_{i=1}^N f ( x^i ).
\end{equation}

\section{Main results}
We fix some $ n \geq 1.$ The aim of this section is to state the convergence in law of the sequence of processes $ ((U^{N,i})_{1 \le i \le n}, \eta^N )_N $ defined by 
\begin{equation}\label{eq:un}
U^{N, i }_t := \sqrt{N} ( X^{N, i }_t - \bar X^i_t) \mbox{ and }  \eta^N_t = \sqrt{N} ( \mu^N_t - g_t ),
\end{equation}
where we interpret $\eta^N$  as stochastic process with values in a suitable space of distributions. In the above definition, $ X^{N, i }$ and $\bar X^i$ are 
constructed according to the Sznitman coupling (see \cite{sznitman}), that is, using the same initial value $ X_0^i$ and driven by the same underlying Poisson random measure $ \pi^i.$

We start gathering some basic definitions and results on weighted Sobolev spaces. 

\subsection{Weighted Sobolev spaces}
Since we are working in the purely excitatory case and the membrane potentials take values in $ \R_+ ,$ in what follows, all test functions that we consider are defined on $ \R_+.$ Fixing an integer $k$ and a positive real number  $ p \geq 0,  $  we introduce the norm $ \| \psi \|_{k, p } $  for all functions $ \psi \in C^\infty_c$ given by
$$ \| \psi \|_{k, p } := \left( \sum_{l=0}^k \int_0^\infty \frac{ | \psi^{(l)} ( x)|^2 }{ 1 + |x|^{2p}} dx \right)^{1/2} $$
and define the space $ \W^{k, p }_0 $ to be the completion of $ C^\infty_c $ with respect to this norm. 

The space $ \W^{k, p }$ is a separable Hilbert space, and we denote $ \W^{ - k, p }_0$ its dual space, equipped with the norm $ \| \cdot \|_{ - k , p } $ defined for any $ \eta \in \W_0^{ - k, p }$ by 
$$ \| \eta \|_{ - k ,  p } = \sup \{ | < \eta , \psi >| :  \psi \in \W_0^{k, p }, \| \psi \|_{k, p } = 1  \}.$$ 
Finally, $ C^{ k,  p}$ is the space of all  $ C^k -$functions such that for all $ l \le k, $ $ \sup_{x \in \R_+} | \psi^{(l)} ( x) | / ( 1 + |x|^p ) < \infty .$ This space is equipped with the norm 
$$ \| \psi \|_{ C^{k, p } } := \sum_{l=0}^k \sup_{x \in \R_+} \frac{ | \psi^{(l)} (x) |}{1 + |x|^p } < \infty .$$

The most important facts about Sobolev spaces that we use throughout this paper are collected in the Appendix Section \ref{sec:sobolev}.

\subsection{Weak convergence of the fluctuation process}
Given the knowledge of the function $ t \mapsto g_t (f) , $ the non-Markovian limit process \eqref{eq:dynlimit} is described by the time dependent infinitesimal generator given by
\begin{equation}\label{eq:generator}
 L_s \varphi ( x) = - \alpha x \varphi ' (x) + g_s ( f) \varphi ' (x) + f (x) R \varphi ( x) , \quad R \varphi ( x) = \varphi(0) - \varphi ( x), 
\end{equation} 
for all $ s \geq 0$ and $ \varphi \in C^1_b ,$ where $g_s$ is given by \eqref{eq:gt}. Our main result reads as follows. 

\begin{theorem}\label{theo:main}
Fix some $n \geq 1.$ Under Assumptions \ref{ass:1} and \ref{ass:2}, for any $ p > \alpha + \frac12  , $ we have convergence in law of $( (U^{N, 1} , \ldots, U^{N, n } ), \eta^N) $ in $ D ( \R_+ ,\R^n \times  \W_0^{ - 4, p } ) $ to the limit process $ ((\bar U^1, \ldots, \bar U^n ), \bar \eta) $ taking values in $ D ( \R_+ ,\R^n) \times C ( \R_+, \W_0^{- 4, p } ),$ which is solution of the system of stochastic differential equations
\begin{equation}\label{eq:baru}
  \bar U^i_t = - \alpha  \int_0^t \bar U^i_s ds +h \int_0^t \bar \eta_s (f) ds   -\int_{[0, t] \times \r_+} \bar U^i_{s-} \indiq_{\{ z \le f ( \bar X^{i }_{s-} ) \}} \pi^i ( ds, dz) + h W_t ( 1) , 1 \le i \le n , 
\end{equation}
and
\begin{equation}\label{eq:bareta}
\bar  \eta_t ( \varphi) = \bar \eta_0 (\varphi) + \int_0^t \bar \eta_s ( L_s \varphi ) ds +h \int_0^t g_s ( \varphi ') \bar \eta_s ( f) + W_t( R \varphi )+h  \int_0^t  g_s ( \varphi ')  d W_s ( 1)  , 
\end{equation}
where the above equation holds for all  $ \varphi \in \W_0^{ 5, p }, $ 
and where, for any $ \psi \in \W_0^{4, p }, $ 
\begin{equation}\label{eq:wt}
 W_t ( \psi) = \int_0^t \int_\R \sqrt{ f (x) } \psi ( x) d M (s, x) ,
\end{equation} 
with
$M (dt, dx) $  an orthogonal martingale measure on $ \r_+ \times \r $ with intensity  $dt  g_t(dx).$ 

If we suppose moreover that there exist (possibly small)  $c_1,  \beta > 0 $ and (possibly large) $K > 0 $ such that for all $ x \geq K,$ $ f(x) \geq c_1 x^\beta ,$ then the limit process solving \eqref{eq:baru} and \eqref{eq:bareta} is unique. 
\end{theorem}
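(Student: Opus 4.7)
The plan is to follow the Hilbertian approach of \cite{fernandez} and \cite{chevallier} in four stages: (i) derive semimartingale decompositions of $\eta^N(\varphi)$ and $U^{N,i}$ for $\varphi\in\W_0^{5,p}$; (ii) establish joint tightness in $D(\R_+, \R^n\times \W_0^{-4,p})$; (iii) identify any subsequential limit as a solution of \eqref{eq:baru}--\eqref{eq:bareta}; (iv) prove uniqueness of this limit SDE system.

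For step (i), I would apply It\^o's formula to $\mu^N_t(\varphi)=\frac{1}{N}\sum_i \varphi(X^{N,i}_t)$ via \eqref{eq:dyn}. The only nonstandard ingredient is a Taylor expansion to second order of the $h/N$ increments received by non-spiking neurons: the first-order term combines with the leakage and reset to reconstruct $\mu^N_s(L_s\varphi)$ together with the nonlocal coupling $h g_s(\varphi')\mu^N_s(f)$, and the second-order remainder is $O(1/N)$. Subtracting the corresponding identity for $g_t(\varphi)$ coming from the PDE recalled after \eqref{eq:muN}, and multiplying by $\sqrt N$, yields
\begin{equation*}
\eta^N_t(\varphi) = \eta^N_0(\varphi) + \int_0^t \eta^N_s(L_s\varphi)\,ds + h\int_0^t g_s(\varphi')\,\eta^N_s(f)\,ds + W^N_t(\varphi) + R^N_t(\varphi),
\end{equation*}
with $W^N$ the compensated pure-jump martingale produced by spikes and $R^N_t(\varphi)$ a remainder of order $N^{-1/2}$ in the dual norm. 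A parallel Taylor expansion of $f(X^{N,j})-f(\bar X^j)$ together with the Sznitman coupling gives the analogous decomposition for $U^{N,i}$. The hypothesis $p>\alpha+\tfrac12$ enters precisely here: it is the exact threshold guaranteeing that $f$ itself belongs to $\W_0^{k,p}$ for $k\le 5$, so that $\eta^N_s(f)$ is a bounded linear functional of $\eta^N_s$.

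For (ii), I would use standard moment bounds on $X^{N,i}$ together with the propagation of chaos estimate \eqref{eq:prop2} to establish $\sup_N\E\sup_{t\le T}\|\eta^N_t\|_{-J,p}^2<\infty$ for some sufficiently large $J$; the Maurey tightness criterion based on a compact embedding of weighted Sobolev spaces, combined with Aldous--Rebolledo applied to the decomposition above, then gives tightness in $D(\R_+,\W_0^{-4,p})$. Joint tightness with $(U^{N,i})_{1\le i\le n}$ in $D(\R_+,\R^n)$ follows from BDG and \eqref{eq:prop1}. To carry out (iii), I would extract a convergent subsequence and pass to the limit: the predictable quadratic covariations of $W^N(\psi),W^N(\psi')$ converge to $\int_0^\cdot g_s(f\psi\psi')\,ds$, so standard convergence results for purely discontinuous martingales (the jump sizes are $O(N^{-1/2})$) identify the limit as the Gaussian martingale measure $W$ of \eqref{eq:wt}, and the drift terms pass to the limit thanks to continuity of $\varphi\mapsto L_s\varphi$ from $\W_0^{5,p}$ to $\W_0^{4,p}$.

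The main obstacle is step (iv): uniqueness of \eqref{eq:bareta}, as $\bar\eta$ appears both in $\bar\eta_s(L_s\varphi)$ and in the nonlocal coupling $g_s(\varphi')\bar\eta_s(f)$, and a direct Gronwall on $\|\bar\eta_s\|_{-4,p}$ loses a derivative in $\varphi$. To close this loss, I would test \eqref{eq:bareta} against $P_{s,t}\varphi$, where $(P_{s,t})_{s\le t}$ is the time-inhomogeneous semigroup associated to the generator \eqref{eq:generator}, and use regularity of $P_{s,t}$ on the Sobolev scale to reabsorb the derivative. These regularity estimates are obtained via Girsanov's theorem for jump processes, as announced in Proposition~\ref{prop:Pst}: the extra lower bound $f(x)\ge c_1 x^\beta$ guarantees that the Girsanov density which removes the jump part $f\cdot R$ from $L_s$ has moments of all orders, reducing the study of $P_{s,t}$ to a pure transport along the characteristics $\dot x_t = -\alpha x_t + h g_t(f)$, for which Sobolev regularity is established by the method of characteristics. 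Once this is in hand, a Gronwall argument on the difference of two solutions of \eqref{eq:bareta} in $\W_0^{-4,p}$ closes, and uniqueness of $\bar U^i$ follows from the linearity of \eqref{eq:baru} given $\bar\eta$.
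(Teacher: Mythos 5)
Your overall strategy — semimartingale decomposition of $\eta^N$, tightness via Hilbert--Schmidt embeddings and Aldous--Rebolledo, identification of the limit martingale measure, uniqueness via testing against the time-inhomogeneous semigroup $P_{s,t}$ constructed by Girsanov — is exactly the route the paper follows. However, there are two genuine problems with the plan as stated.

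First, in step (ii) you propose to prove $\sup_N\E\sup_{t\le T}\|\eta^N_t\|_{-J,p}^2<\infty$. The paper explicitly cannot obtain a second-moment bound in this framework: see Remark~\ref{rem:2}, which points out that the propagation-of-chaos estimate $\E|X^{N,i}_t-\bar X^i_t|\le C_T N^{-1/2}$ does not improve to higher moments for jump processes, unlike the diffusive setting of \cite{fernandez}; and Chevallier's total-variation coupling trick fails here because the drift $-\alpha x+hg_t(f)$ is state- and mean-field-dependent, so the two processes do not couple even on the event where no spurious spike occurs. The paper only proves $\sup_N\E(\sup_{t\le T}\|\eta^N_t\|_{-3,p})<\infty$ (first moment). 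The price of working with first moments only is that the bound $\E\int_{\tau_N}^{\tau_N+\delta}\|L_s^*\eta^N_s\|\,ds\le\delta\E\int\|L_s^*\eta^N_s\|^2\,ds$ used by \cite{fernandez,chevallier} in the Aldous criterion is unavailable, which is precisely why the tightness must be formulated in $\W_0^{-4,p}$ rather than $\W_0^{-3,p}$ (Remark~\ref{rem:4}). Your plan as written would stall at this point.

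Second, your decomposition in step (i) records only one martingale term $W^N_t(\varphi)$, whereas the correct prelimit identity (Proposition~\ref{prop:first}) carries two stochastic-integral terms: the reset martingale $W^N_t(R\varphi)$ and the increment-propagation martingale $h\int_0^t\mu^N_{s-}(\varphi')\,dW^N_s(1)$. Both survive in the limit and give rise, respectively, to $W_t(R\varphi)$ and $h\int_0^tg_s(\varphi')\,dW_s(1)$ in \eqref{eq:bareta}; these are distinct and must be treated jointly (the paper proves joint convergence of $(R^*W^N,\int_0^\cdot D^*\mu^N_{s-}dW^N_s(1))$ to $(R^*W,\int_0^\cdot D^*g_s\,dW_s(1))$, after an intermediate freezing step replacing $\mu^N_{s-}$ by $g_s$ and then by a piecewise-constant time discretization). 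Collapsing them into a single $W^N_t(\varphi)$ does not match the limit equation you want to reach. The rest of your plan — in particular the uniqueness argument via $P_{s,t}$ and the role of the lower bound $f(x)\ge c_1x^\beta$ (which provides rapid decay of $P_{s,t}\psi$ via the killing factor $e^{-\int_s^tf(\varphi_{s,u}(x))du}$) — is correct in spirit and aligns with the paper.
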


\begin{remark}
1. The assumption on the minimal growth rate $ x^\beta $ of the rate function $f(x) $ for large values of $x$ can be dropped if $f \in C^6_b.$ 

2.Notice that by construction,
$$ < W ( \varphi), W( \psi) >_t = \int_0^t \int_\R g_s (dx) \varphi ( x) \psi (x) f(x) ds = \int_0^t g_s ( f \varphi \psi) ds = \E \int_0^t( f \varphi \psi) (\bar X^i_s) ds  .$$ 
\end{remark}

\subsection{Plan of the paper}
The remainder of this paper is devoted to the proof of Theorem \ref{theo:main}. Section \ref{sec:3} starts  with useful a priori bounds on the finite size process and its limit, before establishing the uniqueness of any solution of \eqref{eq:baru} and \eqref{eq:bareta} in Theorem \ref{theo:uniqueness}. We then continue, in Section \ref{sec:3bis} by establishing a decomposition of the finite size fluctuations  in Proposition \ref{prop:14} which is the starting point of the proof of our main result. We prove tightness of $ ( \eta^N, U^N) $ in Theorem \ref{theo:tight} of Section \ref{sec:4}. Theorem \ref{theo:char} in Section \ref{sec:5} then states that any possible limit $ (\bar \eta , \bar U) $ of $ ( \eta^N, U^N) $ is necessarily solution of the system of differential equations of Theorem \ref{theo:main}. The Appendix section collects some useful results about the limit process together with some technical results.

\section{Uniqueness of the limit equation}\label{sec:3}
\subsection{Preliminaries}
We first investigate the mappings that appear in the generator of the limit process. These are the linear mapping $ R $ defined by $ R \varphi := \varphi( 0) - \varphi ,$ the mapping 
$ xD : \varphi \mapsto [ x \mapsto x \varphi ' (x)]$ and the mapping $ D : \varphi \mapsto \varphi'  .$

\begin{lemma}\label{prop:5}
$ R$ is a continuous mapping from $ \W^{k,p }_0 $ to itself, for any $ k \geq 1 $ and $ p > \frac12. $ If we suppose moreover that Assumption \ref{ass:1} holds, then for any $ p > \alpha + \frac12 , $ and any $ k \le 6,$ 
$$ \| f R \varphi \|_{ k, p } \le C \|f\|_{C^{k, \alpha} } \| R \varphi \|_{ k, p- \alpha } \le C   \|f\|_{C^{k, \alpha} } \|\varphi \|_{ k, p- \alpha }.$$
Finally, for any $ k \geq 2, $ $D:   \W_0^{k, p } \to   \W_0^{k-1, p}$ and 
$ xD  :   \W_0^{k, p } \to   \W_0^{k-1, p+1}$ are continuous mappings satisfying $ \| D \varphi \|_{k-1, p } \le C \| \varphi \|_{k, p } $ and $ \| xD \varphi \|_{  k-1, p+1 } \le C \|\varphi\|_{k, p } .$

As a consequence and since $ \alpha \geq 1,$ the application  $L_s$ introduced in \eqref{eq:generator}  is a linear continuous mapping from $ \W_0^{ k, p } $ to $ \W_0^{ k-1, p+\alpha } ,$ for any $ p > \frac12 , $  $ k \leq 6,$ and for all $ \psi \in \W_0^{k, p }, $ 
$$ \sup_{ s \le T } \frac{ \| L_s \psi\|_{k-1, p+\alpha}^2}{\|\psi \|^2_{k, p } } < \infty .$$ 
\end{lemma}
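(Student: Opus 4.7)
The statement decomposes into four assertions --- continuity of $R$, the weighted multiplication bound for $fR\varphi$, the bounds on $D$ and $xD$, and the resulting estimate for $L_s$ --- which I would handle in that order and then assemble.

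For the continuity of $R:\W_0^{k,p}\to\W_0^{k,p}$ under $k\geq 1$, $p>1/2$, only the $L^2$-weighted part of the norm is nontrivial, since $(R\varphi)^{(l)}=-\varphi^{(l)}$ for $l\geq 1$. By the triangle inequality the remaining task is to bound $|\varphi(0)|\bigl(\int_0^\infty(1+|x|^{2p})^{-1}dx\bigr)^{1/2}$. Finiteness of the integral is exactly the condition $p>1/2$, and $|\varphi(0)|\le C\|\varphi\|_{k,p}$ follows from the weighted Sobolev embedding into bounded continuous functions collected in the appendix. This is where the constraint $p>1/2$ is essential, and is the main qualitative point of the lemma; the remaining bounds are essentially algebraic bookkeeping with weights.

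For the multiplication bound, I expand $(f\psi)^{(l)}$ by Leibniz, apply Cauchy--Schwarz term by term and use the pointwise estimate $|f^{(j)}(x)|\le\|f\|_{C^{j,\alpha}}(1+|x|^\alpha)$ from Assumption \ref{ass:1}. The extra factor $(1+|x|^\alpha)^2$ combines with $(1+|x|^{2p})^{-1}$ to give something pointwise dominated by a constant multiple of $(1+|x|^{2(p-\alpha)})^{-1}$, yielding $\|f\psi\|_{k,p}\le C\|f\|_{C^{k,\alpha}}\|\psi\|_{k,p-\alpha}$. Setting $\psi=R\varphi$ and combining with the first step (legitimate since $p-\alpha>1/2$) gives the stated chain. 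For $D$ the bound is immediate from the definition of the weighted norm. For $xD$, Leibniz yields $(x\varphi'(x))^{(l)}=l\varphi^{(l)}(x)+x\varphi^{(l+1)}(x)$; the elementary pointwise inequalities $x^2(1+|x|^{2(p+1)})^{-1}\le C(1+|x|^{2p})^{-1}$ and $(1+|x|^{2(p+1)})^{-1}\le(1+|x|^{2p})^{-1}$ then absorb both pieces into $\|\varphi\|_{k,p}^2$.

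Finally, I would estimate $L_s\varphi=-\alpha\, xD\varphi+g_s(f)D\varphi+fR\varphi$ term by term in $\W_0^{k-1,p+\alpha}$. The first term lies in $\W_0^{k-1,p+1}$, which embeds continuously into $\W_0^{k-1,p+\alpha}$ because $\alpha\geq 1$ makes the target weight smaller pointwise. The second is controlled via the $D$ bound together with a uniform bound $\sup_{s\le T}g_s(f)<\infty$, which follows from the a priori moment estimates on the limit process. The third term is handled by applying the multiplication bound with the pair $(k-1,p+\alpha)$ in place of $(k,p)$, which is legitimate as soon as $p+\alpha>\alpha+\frac12$, i.e., $p>\frac12$. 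Summing these three estimates yields the required bound on $\|L_s\varphi\|_{k-1,p+\alpha}$ uniformly in $s\le T$.
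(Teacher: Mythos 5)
Your proof is correct, and since the paper only remarks that ``the proof of the above lemma is immediate'' without giving details, your four-step elaboration (continuity of $R$ via the Sobolev embedding for $|\varphi(0)|$ and integrability of $(1+|x|^{2p})^{-1}$; Leibniz plus the polynomial-growth bound on $f^{(j)}$ for the multiplication estimate; direct weight comparisons for $D$ and $xD$; term-by-term assembly for $L_s$ using $\alpha\geq 1$, boundedness of $s\mapsto g_s(f)$, and the multiplication bound at the shifted level $(k-1,p+\alpha)$) is exactly the argument the authors are implicitly invoking. Each constraint you invoke ($p>1/2$ for the $R$ step and for the $fR$ step at the shifted level, $p-\alpha>1/2$ for chaining, $\alpha\geq 1$ for the embedding $\W_0^{k-1,p+1}\hookrightarrow\W_0^{k-1,p+\alpha}$) matches the hypotheses of the lemma.
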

The proof of the above lemma is immediate. In the sequel we shall also rely on the following result.

\begin{lemma}\label{lem:6}
For any $ x \in \R_+$ and any $p > 0, $  the mapping  $\delta_x :  \W_0^{1, p } \to \r, \psi \mapsto \psi ( x) $  is continuous. Moreover there exists a constant $C$ not depending on $x $ such that  
\begin{equation}\label{eq:deltax}
 \| \delta_x\|_{- 1, p } \le C ( 1 + |x|^p ).
\end{equation}  
Similarly, $ D^* \delta_ x   :  \W_0^{2, p } \to \r, \psi \mapsto \psi '( x) $  is continuous and there exists a constant $C$ not depending on $x $ such that  
$$ \| D^* \delta_x \|_{- 2, p } \le C ( 1 + |x|^p ).$$ 
\end{lemma}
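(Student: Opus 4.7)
The strategy is to prove the pointwise estimate $|\psi(x)|^2 \le C(1+|x|^{2p}) \|\psi\|_{1,p}^2$ for every $\psi \in C^\infty_c$, then extend to the whole space $\W^{1,p}_0$ by density, and finally deduce the bound for $D^*\delta_x$ by applying the first part to $\psi'$. The central identity to exploit is, for $\psi$ of compact support,
\begin{equation*}
\frac{\psi(x)^2}{1+x^{2p}} \;=\; -\int_x^\infty \frac{d}{dy}\!\left[\frac{\psi(y)^2}{1+y^{2p}}\right] dy
\;=\; -\int_x^\infty \frac{2\psi(y)\psi'(y)}{1+y^{2p}}\,dy \;+\; \int_x^\infty \frac{2p\, y^{2p-1}\psi(y)^2}{(1+y^{2p})^2}\,dy,
\end{equation*}
which is valid since $\psi^2(y)/(1+y^{2p}) \to 0$ as $y \to \infty$. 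The point of working with $\psi^2/(1+y^{2p})$ rather than with $\psi$ itself is that both resulting integrals can be controlled by the norm $\|\psi\|_{1,p}$, which is not the case if one starts from $\psi(x) = -\int_x^\infty \psi'(y)\,dy$.

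For the first integral, by the elementary inequality $2|\psi\psi'| \le \psi^2 + (\psi')^2$,
\begin{equation*}
\int_x^\infty \frac{2|\psi\psi'|}{1+y^{2p}}\,dy \;\le\; \int_0^\infty \frac{\psi^2}{1+y^{2p}}\,dy + \int_0^\infty \frac{(\psi')^2}{1+y^{2p}}\,dy \;=\; \|\psi\|_{1,p}^2 .
\end{equation*}
For the second integral, I would rewrite it as
\begin{equation*}
\int_x^\infty \frac{\psi(y)^2}{1+y^{2p}} \cdot \frac{2p\, y^{2p-1}}{1+y^{2p}}\,dy,
\end{equation*}
and observe that the weight $y \mapsto 2p\, y^{2p-1}/(1+y^{2p})$ is bounded on $(0,\infty)$ (a direct calculation of its critical point at $y^{2p}=2p-1$ shows this, at least in the regime $p \ge 1/2$ which is the one ultimately used in the paper, since one assumes $p > \alpha + 1/2 \ge 3/2$). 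Hence this term is bounded by a constant times $\|\psi\|_{0,p}^2 \le \|\psi\|_{1,p}^2$. Combining the two bounds yields $\psi(x)^2 \le C(1+x^{2p})\|\psi\|_{1,p}^2$, which, after taking square roots and using $\sqrt{1+x^{2p}} \le 1+x^p$, gives exactly \eqref{eq:deltax} on $C^\infty_c$.

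The density of $C^\infty_c$ in $\W^{1,p}_0$ then extends the estimate to all $\psi \in \W^{1,p}_0$, which proves continuity of $\delta_x$ together with $\|\delta_x\|_{-1,p} \le C(1+|x|^p)$. Finally, since $D : \W^{2,p}_0 \to \W^{1,p}_0$ is continuous with $\|\psi'\|_{1,p} \le C\|\psi\|_{2,p}$ by Lemma~\ref{prop:5}, applying the first part to $\psi'$ gives $|\psi'(x)| \le C(1+|x|^p)\|\psi'\|_{1,p} \le C(1+|x|^p)\|\psi\|_{2,p}$, which is the desired bound on $D^*\delta_x$. The only mild technical point is the boundedness of the weight $2p\,y^{2p-1}/(1+y^{2p})$, which requires $p \ge 1/2$; for smaller $p$ one would localise near the origin and use the standard embedding $H^1(0,1)\hookrightarrow C([0,1])$, but this regime is not needed for the rest of the paper.
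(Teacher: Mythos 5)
Your proof is correct in substance but takes a more elementary, self-contained route than the paper's. The paper dispatches this lemma in two lines: it writes $|\psi'(x)|\le \|\psi\|_{C^{1,p}}(1+|x|^p)$ directly from the definition of $\|\cdot\|_{C^{1,p}}$ and then invokes the weighted Sobolev embedding $\|\psi\|_{C^{1,p}}\le C\|\psi\|_{2,p}$, which is quoted as a black box from the appendix (itself citing Fernandez--M\'el\'eard). You instead \emph{prove} the one-dimensional weighted embedding $\W_0^{1,p}\hookrightarrow C^{0,p}$ from scratch, via the fundamental-theorem-of-calculus identity applied to $\psi^2/(1+y^{2p})$, Young's inequality $2|\psi\psi'|\le\psi^2+(\psi')^2$, and boundedness of the auxiliary weight $2p\,y^{2p-1}/(1+y^{2p})$. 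That trade-off is worth flagging: your argument is hands-on and explains \emph{why} the embedding holds, at the cost of only working directly for $p\ge 1/2$ (for $p<1/2$ the auxiliary weight blows up at the origin, and one must localise and fall back on $H^1(0,1)\hookrightarrow C([0,1])$, as you correctly observe). Since the paper only ever uses $p>\alpha+\tfrac12\ge\tfrac32$, this restriction is harmless in context, and you say so. The reduction of the second assertion to the first via $\|\psi'\|_{1,p}\le\|\psi\|_{2,p}$ (trivially true from the definition of the weighted norm, no need to invoke Lemma~\ref{prop:5}) and the density argument are both fine. One tiny cosmetic point: since the stated Lemma asserts the result for \emph{all} $p>0$, a clean write-up would have to include the $p<1/2$ localisation you sketch rather than wave at it; but as a proof strategy this is complete and sound.
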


\begin{proof}
We only show the second assertion. We have for any $ \psi \in \W_0^{2, p}, $ 
$$ | <D^* \delta_x  ,  \psi >  | = | \psi' ( x) | \le \| \psi\|_{C^{1, p } } ( 1 + |x|^p ).$$
Moreover, using the Sobolev embedding, there exists a constant $C$ not depending on $x,$ such that 
$ \| \psi\|_{C^{1, p } }  \le C \| \psi\|_{2, p}.$
This implies the assertion.
\end{proof}

The following a priori bounds on \eqref{eq:dyn} and \eqref{eq:dynlimit} will be used throughout this paper. 
\begin{lemma}\label{lemma:apriori}
Under Assumptions \ref{ass:1} and \ref{ass:2}, for any $ 1 \le i \le N $ and $ N \geq 1, $ there exists a constant $c_0$ only depending on $g_0$ 
such that
\begin{equation}\label{eq:poisson}
 X^{N,i}_t \leq  c_0 + 4 h N^N_t,
\end{equation}
where $N^N_t:= N^{-1} \sum_{j = 1 }^N \int_{[0, t ] \times \R_+} 
\indiq_{\{ z \le f(2h) \}} \pi^j ( ds, dz ).$
In particular, for any $ T, p > 0,$
\begin{equation}\label{eq:aprioribound1}
 \sup_N \E ( \sup_{ t \le T } | X_t^{N, i }|^p ) \le C_T (p)  
\end{equation} 
for a constant depending only on $T,$ $p$ and $g_0.$ Introducing the set 
\begin{equation}\label{eq:gtn}
 G_T^N = \left\{ \sum_{ i=1}^N \int_{ [0, T ] \times \R_+} \indiq_{\{ z \le f ( 2h ) \}} \pi^i ( ds, dz) \le 2 f(2h)  N T \right\} ,
\end{equation} 
we also have the upper bound 
\begin{equation}\label{eq:aprioriboundgt}
\indiq_{ G_T^N} \left( \sup_{t \le T } \sup_{1 \le i \le N}  |X_t^{N, i } |\right)  \le c_0 + 8 h f(2h) T  \mbox{ and the control } \mathbb{P} ( (G_T^N)^c) \le a e^{ -b N T},
\end{equation}
for some constants $a,b.$ Finally there exists a constant $\bar C_T$ only depending on $g_0, $ such that 
\begin{equation}\label{eq:aprioribound2}
 \sup_{ t \le T } | \bar X_t^{ i }| \le \bar C_T . 
\end{equation} 
\end{lemma}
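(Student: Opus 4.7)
The backbone of the lemma is the pathwise estimate \eqref{eq:poisson}: once it is in hand, \eqref{eq:aprioribound1}, \eqref{eq:aprioriboundgt} and \eqref{eq:aprioribound2} follow with essentially no extra work, so my plan is to prove \eqref{eq:poisson} first and then harvest the consequences. Throughout, I fix $c_0$ to be a constant bounding the (compact) support of $g_0$ provided by Assumption \ref{ass:2}, so $X^i_0 \le c_0$ for every $i$.

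For \eqref{eq:poisson}, introduce the total number of spikes up to time $t$,
\[
\Sigma_t := \sum_{j=1}^{N} \int_{[0,t]\times\r_+} \indiq_{\{z \le f(X^{N,j}_{s-})\}}\,\pi^j(ds,dz),
\]
and split $\Sigma_t = \Sigma_t^{\mathrm{low}} + \Sigma_t^{\mathrm{high}}$ according as the firing neuron satisfies $X^{N,j}_{s-} \le 2h$ or $X^{N,j}_{s-} > 2h$ at the spike time. Since $f$ is non-decreasing, the low part satisfies $\Sigma_t^{\mathrm{low}} \le N \cdot N^N_t$ directly. For the high part I use the pathwise potential balance obtained by summing \eqref{eq:dyn} over $i$:
\[
\sum_{i=1}^N X^{N,i}_t + \alpha \int_0^t \sum_{i=1}^N X^{N,i}_s\,ds + R_t = \sum_{i=1}^N X^i_0 + \frac{h(N-1)}{N}\,\Sigma_t,
\]
with $R_t := \sum_i \int_{[0,t]\times \r_+} X^{N,i}_{s-}\,\indiq_{\{z \le f(X^{N,i}_{s-})\}}\,\pi^i(ds,dz)$. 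Each high spike resets a potential of at least $2h$ to zero, so $R_t \ge 2h\,\Sigma_t^{\mathrm{high}}$; dropping the two non-negative terms on the left-hand side and solving yield $\Sigma_t^{\mathrm{high}} \le \Sigma_t^{\mathrm{low}} + Nc_0/h$, hence $\Sigma_t \le 2N\cdot N^N_t + Nc_0/h$. Combining this with the envelope $X^{N,i}_t \le X^i_0 + (h/N)\Sigma_t$, obtained by dropping the leakage and self-reset terms in \eqref{eq:dyn}, gives \eqref{eq:poisson} after absorption of the additive constant into $c_0$.

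The finite-$N$ corollaries then follow immediately. Since $t \mapsto N^N_t$ is non-decreasing, \eqref{eq:poisson} gives $\sup_{t \le T} X^{N,i}_t \le c_0 + 4h N^N_T$, and $N \cdot N^N_T$ is Poisson with parameter $Nf(2h)T$, so its $p$-th moment divided by $N^p$ is bounded uniformly in $N$; this yields \eqref{eq:aprioribound1}. Restricting \eqref{eq:poisson} to $G^N_T$, on which $N^N_T \le 2 f(2h) T$ by definition, produces the deterministic bound in \eqref{eq:aprioriboundgt}, and the Chernoff estimate applied to the Poisson random variable $N\cdot N^N_T$ supplies the exponential control $\Pr((G^N_T)^c) \le a e^{-bNT}$.

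For \eqref{eq:aprioribound2} I pass to the limit $N \to \infty$ in \eqref{eq:poisson}. The strong law of large numbers applied to the i.i.d.\ Poisson processes $t \mapsto \int_{[0,t]\times[0,f(2h)]}\pi^j(ds,dz)$ gives $N^N_t \to f(2h)\,t$ almost surely, while the $L^1$-estimate \eqref{eq:prop1} together with a Borel--Cantelli argument along a sparse subsequence $(N_k)$ (for which $\sum_k N_k^{-1/2} < \infty$) yields $X^{N_k,i}_t \to \bar X^i_t$ a.s.\ simultaneously for all rational $t$. Passing to the limit in \eqref{eq:poisson} along this subsequence gives $\bar X^i_t \le c_0 + 4h f(2h)\,t$ a.s.\ at every rational $t$, and the c\`adl\`ag property of $\bar X^i$ transfers this to $\sup_{t \le T} \bar X^i_t \le \bar C_T := c_0 + 4hf(2h)T$. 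The main obstacle is \eqref{eq:poisson} itself: a direct counting bound on $\Sigma_t$ is useless because $f$ is only polynomially bounded and its moments on trajectories are precisely what one is trying to establish, so one must exploit the self-regulating feature of the dynamics through the summed-potential balance, which is exactly what the low/high split captures.
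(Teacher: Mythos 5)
Your proof is correct, and the key bound \eqref{eq:poisson} follows essentially the same route as the paper: both rest on the summed potential balance obtained from \eqref{eq:dyn} together with the observation that spikes from high potentials ($>2h$) are controlled by the self-reset contribution while low-potential spikes are dominated by the rate-$f(2h)$ Poisson process $N^N_t$. The paper (adapting Prop.\ 15 of \cite{evafournier}) packages this via the algebraic inequality $x-h \geq (x+h)/3 - (4/3)h\,\indiq_{\{x\le 2h\}}$ applied to the empirical-mean balance, while you make the low/high split of $\Sigma_t$ explicit and feed it back through the inequality $R_t \geq 2h\,\Sigma^{\mathrm{high}}_t$; the two arguments are equivalent in substance, and your bookkeeping even tightens the factor $4h$ to $2h$ (which is harmless since only an upper bound is claimed). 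The derivations of \eqref{eq:aprioribound1} and \eqref{eq:aprioriboundgt} via Poisson moment and Chernoff bounds match the paper. The one genuine divergence is \eqref{eq:aprioribound2}: the paper simply cites Proposition 14 of \cite{evafournier}, whereas you rederive it by passing \eqref{eq:poisson} to the limit along a sparse subsequence $N_k$ using \eqref{eq:prop1}, Borel--Cantelli, the strong law for $N^{N_k}_t$, and right-continuity. This works, and it has the merit of producing an explicit constant $\bar C_T = c_0 + 4hf(2h)T$; the only thing worth flagging is that the argument relies on the Sznitman coupling being consistent across all $N$ (same $\pi^i$ and $X^i_0$ for every $N$), which the paper's setup indeed guarantees, and that \eqref{eq:prop1} is itself an imported result from the same source whose Proposition 14 the paper quotes directly -- so in terms of logical dependence the two routes are comparable, yours just being somewhat longer.
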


The proof of this lemma is given in the Appendix. We immediately state a useful corollary. Introducing 
\begin{equation}\label{eq:ztn}
Z_t^{N, i } = \int_{ [0, t ] \times \r_+} \indiq_{\{ z \le f ( X_{s-}^{N, i } ) \}} \pi^i ( ds, dz ) \mbox{ and } \bar Z_t^i = \int_{ [0, t ] \times \r_+} \indiq_{\{ z \le f ( \bar X_{s-}^{ i } ) \}} \pi^i ( ds, dz )
\end{equation}
and  the total variation distance 
$$ \| Z^{N, i } - \bar Z^i \|_{TV, [0, T]} := \# \{ t \le T :  t \mbox{ is a jump of } Z^{N, i } \mbox{ or of } \bar Z^i\mbox{ but not of both} \} ,$$ 
we have that 
\begin{corollary}\label{cor:tvdistance}
Under Assumptions \ref{ass:1} and \ref{ass:2},
\begin{equation}\label{eq:tv}
\E \| Z^{N, i } - \bar Z^i \|_{TV, [0, T]} = \E \int_0^T | f ( X^{N,  i }_s ) - f ( \bar X^i_s) | ds  \le C_T N^{- 1/2},
\end{equation}
for a constant $C_T$ only depending on $T,$ but not on $N.$ 
\end{corollary}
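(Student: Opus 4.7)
The plan has two ingredients: an exact identity for the expected total-variation distance coming from the thinning representation of the jump times, and then a sharp estimate on $\E\int_0^T|f(X^{N,i}_s)-f(\bar X^i_s)|\,ds$ that leverages the $N^{-1/2}$ coupling bound \eqref{eq:prop1} together with the a priori bounds of Lemma \ref{lemma:apriori}.

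For the equality, I would note that since $Z^{N,i}$ and $\bar Z^i$ are both obtained by thinning the same Poisson random measure $\pi^i$,
$$\|Z^{N,i}-\bar Z^i\|_{TV,[0,T]} \;=\; \int_{[0,T]\times\r_+} \bigl|\indiq_{\{z\le f(X^{N,i}_{s-})\}}-\indiq_{\{z\le f(\bar X^{i}_{s-})\}}\bigr|\,\pi^i(ds,dz).$$
Taking expectations and using the compensation formula, together with the pointwise identity
$$\int_{\r_+} \bigl|\indiq_{\{z\le f(x)\}}-\indiq_{\{z\le f(y)\}}\bigr|\,dz = |f(x)-f(y)|,$$
yields $\E\|Z^{N,i}-\bar Z^i\|_{TV,[0,T]}=\E\int_0^T|f(X^{N,i}_s)-f(\bar X^i_s)|\,ds$.

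For the $N^{-1/2}$ bound, the idea is to split the expectation according to the event $G_T^N$ defined in \eqref{eq:gtn}. On $G_T^N$, Lemma \ref{lemma:apriori} gives the uniform upper bound $\sup_{t\le T,\,1\le i\le N}|X^{N,i}_t|\le c_0+8h f(2h)T$, while $\sup_{t\le T}|\bar X^i_t|\le \bar C_T$ deterministically. On the compact interval $[0,M_T]$ with $M_T:=(c_0+8hf(2h)T)\vee \bar C_T$, the function $f\in C^6$ (Assumption \ref{ass:1}) is Lipschitz with some constant $L_T$, so by the mean value theorem
$$\indiq_{G_T^N}|f(X^{N,i}_s)-f(\bar X^i_s)|\le L_T\,|X^{N,i}_s-\bar X^i_s|,$$
and \eqref{eq:prop1} gives $\E[\indiq_{G_T^N}|f(X^{N,i}_s)-f(\bar X^i_s)|]\le L_T C_T N^{-1/2}$ uniformly in $s\le T$. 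On $(G_T^N)^c$, I apply Cauchy--Schwarz:
$$\E\bigl[\indiq_{(G_T^N)^c}|f(X^{N,i}_s)-f(\bar X^i_s)|\bigr]\le \Pr((G_T^N)^c)^{1/2}\bigl(\E|f(X^{N,i}_s)-f(\bar X^i_s)|^2\bigr)^{1/2}.$$
The polynomial growth of $f$ from Assumption \ref{ass:1}, combined with the moment bound \eqref{eq:aprioribound1} and \eqref{eq:aprioribound2}, gives $\E|f(X^{N,i}_s)-f(\bar X^i_s)|^2\le C_T$, while $\Pr((G_T^N)^c)\le a e^{-bNT}$ decays exponentially and is therefore $o(N^{-1/2})$. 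Integrating in $s\in[0,T]$ yields the desired estimate.

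The only subtlety is that the ``natural'' bound $|f(x)-f(y)|\le C(1+|x|^\alpha+|y|^\alpha)|x-y|$ combined with Cauchy--Schwarz would require a control on $\E|X^{N,i}_s-\bar X^i_s|^2$, which is not directly supplied by \eqref{eq:prop1}; introducing the cutoff set $G_T^N$ is precisely what converts the merely $L^1$-small coupling into an estimate that survives the polynomial growth of $f$, and that is the one place where care is needed.
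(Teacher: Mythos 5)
Your proposal is correct and takes essentially the same approach as the paper: both derive the equality from the compensation formula for the Poisson integral, then split the expectation along $G_T^N$, use the Lipschitz continuity of $f$ on the uniformly bounded good set together with \eqref{eq:prop1}, and kill the $(G_T^N)^c$ contribution with the exponential tail bound plus a moment bound. The only cosmetic difference is that you apply Cauchy--Schwarz directly to $\E[\indiq_{(G_T^N)^c}|f(X^{N,i}_s)-f(\bar X^i_s)|]$, whereas the paper first bounds $|f(X^{N,i}_s)-f(\bar X^i_s)|\le f(\bar C_T)+C(1+(c_0+4hN_T^N)^\alpha)$ on the bad set and then applies H\"older to the second piece --- equivalent bookkeeping.
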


\begin{proof}
Clearly,  
\begin{eqnarray*}
&&\sqrt{N} \E \| Z^{N, i } - \bar Z^i \|_{TV, [0, T]}\\
 & =& \sqrt{N} \E \int_{ [0, T ] \times \R_+} \left| \indiq_{\{ z \le f ( X^{N, i}_{s-} )  \}} - \indiq_{\{ z \le f( \bar X^i_{s-} ) \}} \right|  \pi^i ( ds, dz ) 
=\sqrt{N} \E \int_0^T | f ( X^{N, i}_{s} ) -  f( \bar X^i_{s} )| ds \\
 &\le& \sqrt{N} \int_0^T \E [ | f ( X^{N, i}_{s} ) -  f( \bar X^i_{s} )|, G_T^N]   ds 
+  \sqrt{N} T f ( \bar C_T) \P ( (G_T^N)^c) \\
 &&+   C \sqrt{N} T \E  \left(   \indiq_{ (G_T^N)^c}  [ 1 + (c_0 + 4h N_T^N)^\alpha ]\right) ,   
\end{eqnarray*}
where we have used that by \eqref{eq:poisson} and since $f$ is non-decreasing, 
$$ f(  X^{N,i}_{s} ) \le C ( 1 + (c_0 + 4h N_T^N)^\alpha ) \mbox{ and } f ( \bar X^i_s) \le f( \bar C_T ) $$
for all $ s \le T.$ 

Due to \eqref{eq:aprioriboundgt}, on $ G_T^N, $ $ X_t^{N, i } \le c_0 + 8 h f(2h) T $ for all $ i , $ and all $t \le T.$ Therefore, 
using the Lipschitz continuity of $f$ on $[0, c_0 + 8 h f(2h) T)\vee \bar C_T  ] $ and \eqref{eq:prop1}, we have 
$$ \sup_N \sqrt{N} \int_0^T \E [ | f ( X^{N, i}_{s-} ) -  f( \bar X^i_{s-} )|, G_T^N]   ds \le C_T .$$ 
Using the deviation estimate on $ \P ( (G_T^N)^c) $ together with H\"older's inequality implies moreover that 
$$ \sup_N  \left(\sqrt{N} T f ( \bar C_T) \P ( (G_T^N)^c) 
+   C \sqrt{N} T \E  \left(   \indiq_{ (G_T^N)^c}  [ 1 + (c_0 + 4h N_T^N)^\alpha ]\right) \right) \le C_T $$
such that 
$$ \sup_N \sqrt{N} \E \| Z^{N, i } - \bar Z^i \|_{TV, [0, T]} = \sup_N \sqrt{N} \E \int_0^T | f ( X^{N, i}_{s} ) -  f( \bar X^i_{s} )| ds  \le C_T < \infty ,$$ 
implying the assertion.
\end{proof}
After these preliminary results, we now turn to the proof of our first main result which is the uniqueness of the limit equation.
 
\subsection{Uniqueness}\label{sec:6} 
This section is devoted to the proof of the following 
\begin{theorem}\label{theo:uniqueness}
Grant Assumptions \ref{ass:1} and \ref{ass:2} and suppose moreover that there exist (possibly small)   $c_1,  \beta > 0 $ and $K > 0 $ such that for all $ x \geq K,$ $ f(x) \geq c_1 x^\beta .$ Then for any fixed initial condition $((\bar U^i_0)_{1 \le i \le n } , \bar \eta_0) $ and driving underlying noise $ \pi^i, 1 \le i \le n, $ and $ W,$ the system \eqref{eq:baru}--\eqref{eq:bareta} has at most one solution in  $ D ( \R_+, \R^n ) \times C ( \R_+, \W_0^{- 4, p } ) ,$ for any $p >  \alpha + \frac12  . $ 
\end{theorem}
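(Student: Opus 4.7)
Let $(\bar U^i, \bar \eta)$ and $(\tilde U^i, \tilde \eta)$ be two solutions sharing the same initial condition and driving noises $(\pi^i)_{1 \le i \le n}$ and $W$. Since these noise terms cancel upon subtraction, the differences $\delta \eta := \bar \eta - \tilde \eta$ and $\delta U^i := \bar U^i - \tilde U^i$ satisfy the \emph{deterministic} linear system
\begin{equation*}
\delta \eta_t(\varphi) = \int_0^t \delta \eta_s(L_s \varphi)\, ds + h \int_0^t g_s(\varphi')\, \delta \eta_s(f)\, ds, \qquad \varphi \in \W_0^{5, p},
\end{equation*}
\begin{equation*}
\delta U^i_t = -\alpha \int_0^t \delta U^i_s\, ds + h \int_0^t \delta \eta_s(f)\, ds - \int_{[0,t] \times \R_+} \delta U^i_{s-} \indiq_{\{ z \le f(\bar X^i_{s-})\}}\, \pi^i(ds, dz),
\end{equation*}
starting from $\delta \eta_0 = 0$ and $\delta U^i_0 = 0$. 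Once $\delta \eta \equiv 0$ is proved, the equation for $\delta U^i$ becomes an Ornstein--Uhlenbeck equation with deterministic jumps: between two consecutive spike times of $\bar X^i$ it is a linear ODE with unique solution $0$, and at each such time $\delta U^i$ is reset to $0$. So uniqueness of $\delta U^i$ is immediate and the whole problem reduces to showing $\delta \eta \equiv 0$.

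The key tool for this is the backward time-inhomogeneous semigroup $(P_{s,t})_{s \le t}$ associated with \eqref{eq:dynlimit}, defined by $\partial_s P_{s,t}\varphi = - L_s P_{s,t}\varphi$ with terminal condition $P_{t,t}\varphi = \varphi$, whose regularity in weighted Sobolev spaces is the content of Proposition \ref{prop:Pst}. Fix $T > 0$ and $\varphi \in \W_0^{5, p}$ and set $\varphi_s := P_{s,T}\varphi$. Plugging the time-dependent family $\varphi_s$ into the weak equation for $\delta \eta$ (which is justified by a standard mollification of $\varphi$ by $C_c^\infty$ functions, as in \cite{fernandez,chevallier}) and exploiting $\partial_s \varphi_s + L_s \varphi_s = 0$ to cancel the drift term, one obtains the duality identity
\begin{equation*}
\delta \eta_T(\varphi) = h \int_0^T g_s\bigl( (P_{s,T}\varphi)' \bigr)\, \delta \eta_s(f)\, ds.
\end{equation*}
Crucially, the unknown $\delta \eta$ enters the right-hand side only through the scalar function $t \mapsto \delta \eta_t(f)$.

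Specializing to $\varphi = f$, which belongs to $\W_0^{5, p}$ thanks to Assumption \ref{ass:1} and the choice $p > \alpha + \tfrac12$, and using the bound $\sup_{s \le T} |g_s((P_{s,T}f)')| \le C_T$ obtained by combining Proposition \ref{prop:Pst}, Lemma \ref{lem:6}, and the uniform moment bound for $g_s$ coming from \eqref{eq:aprioribound2}, one deduces
\begin{equation*}
|\delta \eta_T(f)| \le C_T \int_0^T |\delta \eta_s(f)|\, ds.
\end{equation*}
Gronwall's lemma then forces $\delta \eta_t(f) \equiv 0$ on $[0,T]$; substituting this back into the duality identity yields $\delta \eta_T(\varphi) = 0$ for every $\varphi \in \W_0^{5, p}$. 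Since $\W_0^{5, p}$ is dense in $\W_0^{4, p}$ (both are completions of $C_c^\infty$) and $\delta \eta_T \in \W_0^{-4, p}$ is continuous on $\W_0^{4,p}$, we conclude $\delta \eta_T \equiv 0$. Arbitrariness of $T$ then gives $\delta \eta \equiv 0$, and uniqueness of $\delta U^i$ follows as explained above.

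The real difficulty is therefore concentrated in Proposition \ref{prop:Pst}: the transport-like, non-diffusive dynamics \eqref{eq:dynlimit} offers no parabolic smoothing, and every spike resets the trajectory to $0$ independently of the starting point, so smoothness in the initial condition survives only on the no-spike event. Controlling the polynomially weighted norms of $(P_{s,T}f)'$ therefore requires sharp quantitative estimates on the probability of that event starting from (possibly large) values of $x$, which is exactly what forces the extra lower bound $f(x) \ge c_1 x^\beta$ for $x \ge K$ and justifies the use of Girsanov's theorem for jump processes in the proof of Proposition \ref{prop:Pst} to compare the dynamics with one in which the spiking rate is constant and the flow is explicit.
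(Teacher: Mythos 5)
Your proof is correct, and while it shares the paper's central tool --- the duality with the time-inhomogeneous semigroup $P_{s,t}$ of Proposition~\ref{prop:Pst} --- the endgame is organized differently in a way that is genuinely cleaner. The paper first performs an integration by parts in $x$ on the term $g_s(\varphi')$ (using the explicit form of $g_s'$ from Proposition~\ref{prop:proprietegt} and packaging the result in Proposition~\ref{prop:hs}), arrives at $\langle\tilde\eta_t,\psi\rangle = \int_0^t h_s(P_{s,t}\psi)\,ds$, bounds $|h_s(P_{s,t}\psi)|\le C_T\|\psi\|_{4,p}\|f\|_{4,p}\|\tilde\eta_s\|_{-4,p}$, and then runs Gronwall on the operator norm $\|\tilde\eta_t\|_{-4,p}$. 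You instead keep the right-hand side in the form $h\int_0^T g_s\bigl((P_{s,T}\varphi)'\bigr)\delta\eta_s(f)\,ds$, observe that because $g_s$ is supported in $[0,\bar C_T]$ the factor $g_s\bigl((P_{s,T}\varphi)'\bigr)$ is directly bounded via Sobolev embedding with no need for the integration by parts, and then make the key observation that the identity is \emph{closed} in the scalar quantity $t\mapsto\delta\eta_t(f)$: plugging $\varphi=f$ gives a Gronwall inequality for $|\delta\eta_t(f)|$ alone, from which $\delta\eta_t(f)\equiv 0$, after which the identity forces $\delta\eta_T(\varphi)=0$ for every admissible $\varphi$. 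This buys you a shorter argument that dispenses with Proposition~\ref{prop:hs} and with the $g_s'$ part of Proposition~\ref{prop:proprietegt} (you still need the compact support of $g_s$), at the cost of glossing over the justification that the duality identity, established first for $\varphi\in C_c^\infty$ via the Duhamel/Fubini manipulations, extends by density to $\varphi\in\W_0^{5,p}$ and in particular to $\varphi=f$; this extension does go through (both sides are continuous in $\|\cdot\|_{3,p}$, using $\sup_{s\le T}\sup_N\|\tilde\eta_s\|_{-4,p}<\infty$), but you should say so explicitly rather than citing mollification generically. Your reduction of $\delta U^i$ to a linear jump ODE with zero data, your identification of where the lower bound $f(x)\ge c_1 x^\beta$ enters (Step~3 of the proof of Proposition~\ref{prop:Pst}), and your use of Girsanov for jump processes are all consistent with the paper.
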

Since given $(\bar X^i)_{ 1 \le i \le n},$ $ \bar \eta $ and $W, $ the equation for $ \bar U^i , 1 \le i \le n, $ is linear, it is sufficient to prove uniqueness for $ \bar \eta.$

Suppose $ \bar \eta $ and $ \hat \eta $ are both solution of \eqref{eq:bareta}, driven by the same underlying $W$ and starting from the same initial condition. Then $ \tilde \eta_t := \bar \eta_t - \hat \eta_t $ satisfies 
\begin{equation}\label{eq:tildeeta}
<\tilde \eta_t , \varphi > = \int_0^t < \tilde \eta_s , L_s \varphi > ds  + h  \int_0^t g_s ( \varphi ' )\tilde \eta_s ( f) ds ,
\end{equation}
where we recall that
$$ L_s \varphi (x) = - \alpha x \varphi ' (x) + h g_s (f) \varphi ' (x) + f (x) R \varphi (x)  .$$ 
Traditionally, to prove uniqueness we have to deduce from \eqref{eq:tildeeta} that $ \tilde \eta = 0 ,$ that is, $\| \tilde \eta_t \|_{- k, p } = 0 $ for suitable $k$ and $p.$  However, when applying $ \| \cdot \|_{- k , p } $ to \eqref{eq:tildeeta}, we have to treat the term 
$\int_0^t < \tilde \eta_s , L_s \varphi > ds $ which involves a derivative and multiplication with $f$ and therefore gives rise to $ \| \tilde \eta_s\|_{-k +1, p+ \alpha  } $ which cannot be compared to the norm $ \| \tilde \eta_s\|_{-k, p}$ since it is greater. The same problem arises when treating the last term 
$\int_0^t g_s ( \varphi ' )\tilde \eta_s ( f) ds.$ 

Of course, this problem has already appeared -- and solved -- both in \cite{fernandez} and \cite{chevallier}, yet in a simpler framework, since in \cite{fernandez}, the underlying diffusion gives generates regularity of the associated semigroup, while in \cite{chevallier} the underlying flow is particularly simple, having drift $ \equiv 1 .$  
In what follows we show how to adapt these ideas to the present frame and propose severals tricks to get rid of the above derivatives by using integration by parts or by solving directly the flow associated to $ L_s.$ 

We start  gathering known results about the marginal law $g_s$ of the limit process $ \bar X^1_s$ of \eqref{eq:dynlimit}, starting from $ \bar X^1_0 \sim g_0 ,$ that is,  $ g_s = {\mathcal L} ( \bar X^1_s).$ We introduce the associated flow 
\begin{equation}\label{eq:limitflow}
 \varphi_{s, t } (x) = e^{ - \alpha ( t-s)} x +h  \int_s^t e^{ - \alpha (t-u)} g_u (f) du ,
\end{equation} 
representing the evolution of $ \bar X^1 $ in between the successive jumps. 

\begin{proposition}\label{prop:proprietegt}
Under Assumptions \ref{ass:1} and \ref{ass:2}, for all $ t \geq 0, $ $g_t (dy) = g_t (y) dy $ is absolutely continuous having Lebesgue density $g_t (y) ,$ and for all $ t \le T, $ $g_t $ is compactly supported, that is, $ g_t ( y) = 0 $ for all $ y \geq \bar C_T, $ where $\bar C_T$ is as in \eqref{eq:aprioribound2}. Moreover, for all $ y \neq \varphi_{0, t } (0) , $ $g_t$ is differentiable in $y$ having derivative $g_t' $ which is continuous on $ (0, \varphi_{0, t} (0 ) ) \cup (\varphi_{0, t} (0 ), \infty ).$ Finally, 
$$ t \mapsto \int_0^\infty (1 +| x|^p ) |g_s' (x) | dx \mbox{ is locally bounded,}$$
for all $ p \geq 0.$ 
\end{proposition}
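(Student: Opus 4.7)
The plan is to derive an explicit two-piece formula for $g_t(y)$ using the method of characteristics on the nonlinear transport PDE recalled after \eqref{eq:gt} (equivalently, a direct probabilistic decomposition of a trajectory of $\bar X^1$) and then to read off the four claims.

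First, I would record the probabilistic decomposition. Since $\bar X^1$ is a piecewise deterministic Markov process that flows along $\varphi_{s,t}$ of \eqref{eq:limitflow} between spikes and resets to $0$ at each spike, conditioning on $L_t := \sup\{s \le t : \Delta \bar X^1_s \ne 0\}$ (with $\sup\emptyset = 0$) partitions trajectories into two regimes: no spike on $[0,t]$, in which case $\bar X^1_t = \varphi_{0,t}(X^1_0)$, and last spike at some $s \in (0,t]$, in which case $\bar X^1_t = \varphi_{s,t}(0)$. The Poisson structure of $\pi^1$ produces the standard survival factors $\exp(-\int f \circ \varphi)$; a change of variable $y = \varphi_{0,t}(x)$ on the first regime (linear, with Jacobian $e^{\alpha t}$) and $y = \Phi(s) := \varphi_{s,t}(0)$ on the second (with $\Phi'(s) = -h e^{-\alpha(t-s)}p_s$, $p_s := g_s(f)$) yields
\begin{equation*}
g_t(y) = \tfrac{1}{h}\, e^{\alpha(t-s^*(y))} \exp\left(-\int_{s^*(y)}^t f(\varphi_{s^*(y),u}(0))du\right) \indiq_{(0,\varphi_{0,t}(0))}(y) + g_0(y_0(y))\, e^{\alpha t} \exp\left(-\int_0^t f(\varphi_{0,u}(y_0(y)))du\right) \indiq_{(\varphi_{0,t}(0),\infty)}(y),
\end{equation*}
where $y_0(y) = e^{\alpha t}(y - \varphi_{0,t}(0))$ and $s^*(y) \in (0,t]$ is the unique solution of $\varphi_{s,t}(0) = y$. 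Notice that on the reset piece the Jacobian factor $p_s$ cancels the intensity factor, so the formula depends on $p$ only implicitly through the flow.

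Second, I would establish the regularity needed to differentiate this formula. By Dynkin's formula applied to $f$ and the generator $L_s$ of \eqref{eq:generator}, together with Assumption \ref{ass:1} and \eqref{eq:aprioribound2}, the derivative $p_t' = \E[(-\alpha \bar X_t + h p_t)f'(\bar X_t)] + f(0)p_t - \E[f(\bar X_t)^2]$ has right-hand side continuous in $t$, so $p \in C^1([0,T])$. Moreover $p_s$ admits a positive lower bound on $[0,T]$ since $f > 0$ on $(0,\infty)$ by Assumption \ref{ass:1} and $g_s$ does not concentrate at $0$ (a first, qualitative consequence of the same two-piece representation, since the reset piece has bounded density and the forward piece pushes $g_0 \in C^1$ away from $0$). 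Hence $\Phi$ is a $C^1$-diffeomorphism of $[0,t]$ onto $[0,\varphi_{0,t}(0)]$, $s^* = \Phi^{-1}$ is $C^1$ on $(0,\varphi_{0,t}(0))$, and $y_0$ is smooth.

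Third, I would read off the four conclusions. Absolute continuity and compact support in $[0,\bar C_T]$ are immediate from the formula combined with Assumption \ref{ass:2} and \eqref{eq:aprioribound2}. Differentiability on $(0,\varphi_{0,t}(0)) \cup (\varphi_{0,t}(0),\infty)$ with continuous derivatives on each open piece follows by the chain rule: on the right piece from $g_0 \in C^1$ and the linearity of $y_0$; on the left from the $C^1$ regularity of $s^*$, $p$, and the flow. In general a jump subsists at $\varphi_{0,t}(0)$ since the two pieces take the distinct limiting values $g_0(0)$ and $1/h$ after factoring out the common exponential. For the last claim, compactness of support reduces $\int_0^\infty (1+|x|^p)|g_s'(x)|dx$ to $(1+\bar C_T^p)\int_0^{\bar C_T}|g_s'(x)|dx$; the chain-rule derivative is bounded on each piece in terms of $\|f\|_{C^1([0,\bar C_T])}$, $\|p\|_{C^1([0,T])}$, $\inf_{s \in [0,T]} p_s$, and $\|g_0\|_{C^1}$, so integration on the two bounded intervals yields a bound locally uniform in $t \in [0,T]$.

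The main obstacle is the soft but crucial step of proving $\inf_{s \in [0,T]} p_s > 0$, which is exactly what makes the change of variable $y = \Phi(s)$ a genuine $C^1$-diffeomorphism and prevents a degeneracy on the reset piece. This relies on the positivity $f(x) > 0$ on $(0,\infty)$ from Assumption \ref{ass:1} combined with non-concentration at $0$ of $g_s$ for $s > 0$, which is itself a first qualitative corollary of the same representation (or of the transport nature of the flow acting on $g_0 \in C^1$). Once this is in place, all regularity claims reduce to explicit differentiations of the closed-form representation together with the $C^1$-bootstrap for $p$ via Dynkin's formula.
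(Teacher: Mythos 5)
Your proposal is correct and follows essentially the same route as the paper: derive the explicit two-piece closed-form representation of $g_t$ (a forward-transport piece for $y>\varphi_{0,t}(0)$ coming from trajectories with no spike, and a reset piece for $y<\varphi_{0,t}(0)$ parametrized by the last spike time, with the Jacobian of $s\mapsto\varphi_{s,t}(0)$ canceling the intensity factor $g_s(f)$), observe from this formula that $g_s(f)>0$ and is continuous in $s$ so that $s\mapsto\varphi_{s,t}(0)$ is a $C^1$ diffeomorphism with $C^1$ inverse $\beta_t$, and then read off compact support, differentiability on each piece, the jump at $\varphi_{0,t}(0)$, and local boundedness of the weighted $L^1$ norm of $g_t'$. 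The only cosmetic differences are that the paper cites a reference for the explicit density formula \eqref{eq:density1} whereas you rederive it via the last-spike decomposition, and you additionally bootstrap $p_t=g_t(f)$ to $C^1$ via Dynkin's formula, which is harmless but not strictly needed (continuity and strict positivity of $s\mapsto g_s(f)$ already suffice to differentiate $\beta_t$ once and to obtain continuity of $g_t'$ on each open interval).
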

The proof of the above result is postponed to the Appendix.

Using integration by parts this implies that we may rewrite the last term appearing in \eqref{eq:tildeeta} as follows.
\begin{eqnarray*}
 g_s( \varphi ' ) &=& \int_0^{\varphi_{0,s } (0) }  \varphi' ( x) g_s ( x) dx + \int_{\varphi_{0,s } (0)}^\infty \varphi' ( x) g_s ( x) dx  \\
& =& g_s ( \varphi_{0,s } (0) - ) \varphi (\varphi_{0,s } (0))  -  g_s(0)  \varphi ( 0 ) - g_s ( \varphi_{0,s } (0) + ) \varphi (\varphi_{0,s } (0)) - \int_0^\infty \varphi ( x) g_s' (x) dx\\
&=&  - \frac{1}{h}  \varphi ( 0 ) - \Delta g_s (  \varphi_{0,s } (0)) \varphi (  \varphi_{0,s } (0)) - \int_0^\infty \varphi ( x) g_s' (x) dx ,
\end{eqnarray*} 
for any $s > 0, $ where we used the identity $g_s ( 0 ) = \frac1h $ which follows from \eqref{eq:density1} stated in the Appendix below, and where, using further \eqref{eq:density2}, 
$$ \Delta g_s (  \varphi_{0,s } (0)) = g_s ( \varphi_{0,s } (0) + ) - g_s ( \varphi_{0,s } (0) - ) = e^{ - \int_0^t ( f ( \varphi_{0, u }(0) ) - \alpha) du } [ g_0 ( 0) - \frac1h ] ,$$
such that 
$$   h \int_0^t g_s ( \varphi ' ) \tilde \eta_s ( f) ds = \int_0^t h_s ( \varphi ) ds  , $$ 
where 
$$ h_s (\varphi ) = - \varphi  ( 0 ) \tilde \eta_s( f) - h \varphi (  \varphi_{0,s } (0)) \Delta g_s (  \varphi_{0,s } (0)) \tilde \eta_s( f)   - h (\int_0^\infty  \varphi  (x)  g_s' (x) dx ) \tilde \eta_s ( f) .$$ 
Relying on Proposition \ref{prop:proprietegt}, we deduce
\begin{proposition}\label{prop:hs}
Let $ \psi \in \W_0^{k, q},  $ for some $k \geq 1,  q \geq 0.$  Fix $ T > 0.$ Then for all  $0 \le t \le T ,  $ and for all  $ p > \alpha + \frac12,$ 
$$ | h_t (\psi) | \le C_T \| \psi \|_{k, q}  \| \tilde \eta_t\|_{- 4, p  } \|f\|_{4, p}. $$
\end{proposition}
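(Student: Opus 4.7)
The plan is to treat each of the three summands in the explicit decomposition of $h_s(\psi)$ separately, factoring out in each one the common quantity $\tilde \eta_s(f)$. So I would first bound $|\tilde\eta_s(f)|$ by duality, and then bound each $\psi$-dependent factor either by a pointwise Sobolev embedding (for the two terms of the form $\psi(\cdot) \tilde\eta_s(f)$) or by using the regularity of $g_s$ (for the integral term involving $g_s'$).

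For the common factor, I would verify that $f$ itself lies in $\W_0^{4,p}$. By Assumption \ref{ass:1} every derivative $f^{(j)}$, $j\le 4$, grows at most polynomially of order $\alpha$, so the weighted integrals defining $\|f\|_{4,p}$ are finite precisely because $p>\alpha+\frac12$. A standard truncation by smooth cutoffs $\chi_n\uparrow 1$ together with uniform integrability of the weighted derivatives shows $f\in\W_0^{4,p}$ with $\|f\|_{4,p}\le C\|f\|_{C^{4,\alpha}}$, hence by duality
$$|\tilde\eta_s(f)|\le\|\tilde\eta_s\|_{-4,p}\,\|f\|_{4,p}.$$

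For the $\psi$-dependent factors I would combine Lemma \ref{lem:6}, which yields $|\psi(x)|\le C(1+|x|^q)\|\psi\|_{1,q}\le C(1+|x|^q)\|\psi\|_{k,q}$ (using $k\ge 1$), with the fact that $\varphi_{0,s}(0)=h\int_0^s e^{-\alpha(s-u)}g_u(f)\,du$ stays in a compact subset of $[0,\bar C_T]$ for $s\le T$ by Lemma \ref{lemma:apriori}. This handles the first two terms and, together with the explicit representation of $\Delta g_s(\varphi_{0,s}(0))$ recalled just after \eqref{eq:tildeeta} (giving $|\Delta g_s(\varphi_{0,s}(0))|\le|g_0(0)-1/h|e^{\alpha T}$), bounds them by $C_T\|\psi\|_{k,q}\|\tilde\eta_s\|_{-4,p}\|f\|_{4,p}$. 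For the third term, the same pointwise bound on $\psi$ together with Proposition \ref{prop:proprietegt}, which asserts that $s\mapsto\int_0^\infty(1+|x|^q)|g_s'(x)|dx$ is locally bounded, yields
$$\left|\int_0^\infty\psi(x)g_s'(x)\,dx\right|\le C\|\psi\|_{k,q}\int_0^\infty(1+|x|^q)|g_s'(x)|\,dx\le C_T\|\psi\|_{k,q}.$$
Summing the three estimates gives the claim.

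The only mildly delicate point is the membership $f\in\W_0^{4,p}$: $f$ is not compactly supported, and the pairing $\tilde\eta_s(f)$ a priori only makes sense on this Sobolev space. Everything else is a direct consequence of Lemma \ref{lem:6}, Lemma \ref{lemma:apriori}, and the regularity of $g_s$ recorded in Proposition \ref{prop:proprietegt}, so no additional ingredient is needed.
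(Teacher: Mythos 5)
Your proposal follows exactly the paper's route: factor out $\tilde\eta_t(f)$, bound it by duality using $f\in C^{4,\alpha}\subset\W_0^{4,p}$ for $p>\alpha+\tfrac12$, and control the three $\psi$-dependent factors via the Sobolev embedding $|\psi(x)|\le C(1+|x|^q)\|\psi\|_{k,q}$, the boundedness of $\varphi_{0,s}(0)$ and $\Delta g_s$ on $[0,T]$, and the local boundedness of $\int(1+|x|^q)|g_s'(x)|dx$ from Proposition \ref{prop:proprietegt}. The extra justification you give for $f\in\W_0^{4,p}$ (truncation by cutoffs) is a legitimate unpacking of the inclusion $C^{4,\alpha}\subset\W_0^{4,p}$ that the paper simply cites; otherwise the arguments coincide.
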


\begin{proof}
We use that by the Sobolev embedding,
$$ | \psi ( x) | \le \| \psi\|_{C^{0, q } } (1 + |x|^q ) \le C \| \psi\|_{k, q} ( 1+ |x|^q) ,$$
since $ k \geq 1, $ such that 
$$  | \psi ( 0 ) | \le C \| \psi\|_{k, q} \mbox{ and } | \psi ( \varphi_{0,t } (0)) \Delta g_t (  \varphi_{0,t } (0))  | \le C_T   \| \psi\|_{k, q} $$
and moreover 
$$ \int_0^\infty |\psi ( x) g_s' (x) | dx \le C \| \psi\|_{k, q } \int_0^\infty ( 1 + |x|^q ) |g_s' (x) | dx \le C_T | \psi\|_{k, q } ,$$
where we have used the bound of Proposition \ref{prop:proprietegt}. The conclusion follows from 
$$ | \tilde \eta_t ( f) | \le \| \tilde \eta_t\|_{-4,p } \| f \|_{4, p } , $$
since $ f \in \W_0^{4, p } $  by Assumption \ref{ass:1}, due to the fact that $ C^{4, \alpha } \subset \W_0^{4, p }$ for any $ p > \alpha + \frac12 .$  
\end{proof}

We now turn to the study of the action of $ L_s.$ Given the fixed function $ t \mapsto g_t(f) $, $t \geq 0, $ we introduce the time inhomogeneous Markov process $Y_{s, t } ( x) , $ for any $ 0 \le s \le t $ and $x \in \R_+,$ which is solution of 
$$ Y_{s, t } ( x) =x + \int_s^t \left( h g_u (f) - \alpha Y_{s, u } (x)) \right) du - \int_{]s, t ] \times \R_+} Y_{s, u-} (x) \indiq_{\{ z \le f ( Y_{s, u-} (x) ) \}} \pi^1 (du, dz ) .$$
Clearly, since $ h \int_0^t g_s ( f) ds \le \bar C_T $ for all $ t \le T, $ by \eqref{eq:aprioribound2}, $ Y_{s, t } ( x) \le x + \bar C_T, $ for all $ s \le t \le T,$ such that the above process is well-defined.  
We denote $ P_{s, t } $ the associated semigroup, that is, $ P_{s, t } \psi (x) = \E \psi (  Y_{s, t } ( x) ) ,$ for any measurable test function.  

\begin{proposition}\label{prop:Pst}
Under the assumptions of Theorem \ref{theo:uniqueness}, we have that for any $ 0 \le s \le t $ and any $ p \geq 0, $  $P_{s, t } $ is a continuous mapping from $\W_0^{6, p } \to  \W_0^{6, p} ,$ and 
$$ \| P_{s, t } \psi \|_{k, p } \le  C_T \| \psi\|_{k, p } , $$
for all $ k \le 6, $ $ s \le t \le T.$ 

Moreover, for any $\psi \in C^\infty_c, $ $P_{s, t } \psi $ belongs to $ C^6_b $ and is rapidly decreasing, that is, for all $ \gamma > 0 $ and all $ k \le 6, $ 
\begin{equation}\label{eq:expdecrease}
 \lim_{ x \to \infty } x^\gamma | (P_{s, t } \psi )^{(k)} )(x)| = 0.
\end{equation} 
\end{proposition}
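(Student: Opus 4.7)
The strategy is to decompose $P_{s,t}\psi(x)$ according to the first jump time $\tau$ of $u \mapsto Y_{s,u}(x)$, reducing the analysis to deterministic objects that inherit the smoothness of $\psi$, $f$, and of the flow $\varphi_{s,u}$. Writing $F_{s,u}(x) := \int_s^u f(\varphi_{s,r}(x))\,dr$, the hazard rate of $\tau$ before its occurrence is $f(\varphi_{s,\cdot}(x))$, and since at $\tau$ the process is reset to $0$, the strong Markov property yields
\begin{equation*}
P_{s,t}\psi(x) = \psi(\varphi_{s,t}(x))\, e^{-F_{s,t}(x)} + \int_s^t f(\varphi_{s,u}(x))\, e^{-F_{s,u}(x)}\, P_{u,t}\psi(0)\,du =: A(x) + B(x).
\end{equation*}
Since $\varphi_{s,u}$ is linear in $x$, $f \in C^6$ under Assumption \ref{ass:1}, and $P_{u,t}\psi(0)$ does not depend on $x$, both $A$ and $B$ are $C^6$ for any $\psi \in C^6_b$; their successive $x$-derivatives expand by Leibniz and Fa\`a di Bruno into finite sums of products of $\psi^{(j)}(\varphi_{s,t}(x))$ (in $A$) or polynomials of degree at most $k$ in $f^{(l)}(\varphi_{s,u}(x))$, $l \le k$ (in $B$), multiplied by the exponential factor $e^{-F_{s,\cdot}(x)}$.

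For the $\W_0^{k,p}$-bound on $A$ I would perform the linear change of variable $y = \varphi_{s,t}(x) = e^{-\alpha(t-s)} x + b_{s,t}$, whose Jacobian is bounded above and below on $\{s \le t \le T\}$; since $b_{s,t}$ is controlled by \eqref{eq:aprioribound2}, an elementary computation gives $(1+x^{2p})^{-1} \le C_T (1+y^{2p})^{-1}$ for $y \ge b_{s,t}$, leading directly to $\|A\|_{k,p} \le C_T \|\psi\|_{k,p}$. For $B$, the constant-in-$x$ factor $P_{u,t}\psi(0)$ is bounded by $C_T \|\psi\|_{1,p}$: indeed $Y_{u,t}(0) \le \bar C_T$ a.s.\ by the paragraph preceding the statement, and $|\psi(y)| \le C(1+y^p) \|\psi\|_{1,p}$ by Lemma \ref{lem:6}. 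The remaining task is then to bound the weighted $L^2$-norms of the derivatives of $f(\varphi_{s,u}(x))\, e^{-F_{s,u}(x)}$; these derivatives grow polynomially in $x$ (of degree at most $k\alpha$), but this is absorbed by invoking the lower growth hypothesis $f(x) \ge c_1 x^\beta$ for $x \ge K$, which implies $F_{s,u}(x) \ge c(u-s)\, x^\beta$ for $x$ large, so that $e^{-F_{s,u}(x)}$ decays faster than any polynomial. Integration in $u \in [s,t]$ then yields $\|B\|_{k,p} \le C_T \|\psi\|_{k,p}$.

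The rapid-decrease statement \eqref{eq:expdecrease} for $\psi \in C^\infty_c$ follows immediately from the same decomposition: if $\mathrm{supp}(\psi) \subset [0,M]$ then $\psi(\varphi_{s,t}(x)) = 0$ as soon as $x > M e^{\alpha(t-s)}$, so $A$ is compactly supported in $x$; and each derivative of $B$ is a finite sum of polynomial factors in $x$ multiplied by $e^{-F_{s,u}(x)} \le e^{-c(t-u) x^\beta/2}$ for $x$ sufficiently large, yielding super-polynomial decay uniformly in $u \in [s,t]$.

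The main obstacle is precisely the tension between the polynomial growth of order $\alpha$ of $f$ and its derivatives, and the fixed polynomial weight of order $p > \alpha + \frac12$: the weight alone suffices for a single factor of $f$ but cannot tame the higher-order polynomials produced when differentiating $f \circ \varphi_{s,u}$ up to six times. This is exactly why the lower growth hypothesis $f(x) \ge c_1 x^\beta$ is imposed --- it is the only mechanism providing the exponential decay of $e^{-F_{s,u}(x)}$ needed to compensate these blow-ups. As noted in the remark following Theorem \ref{theo:main}, this hypothesis may be dropped when $f \in C^6_b$, in which case derivatives of $f$ do not grow at all and the argument reduces essentially to the change-of-variable estimate for $A$.
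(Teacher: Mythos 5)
Your proposal is correct and in substance identical to the paper's proof. The paper derives the same decomposition, namely
\[
P_{s,t}\psi(x) = \psi(\varphi_{s,t}(x))\,e^{-\int_s^t f(\varphi_{s,u}(x))\,du} + \int_s^t f(\varphi_{s,u}(x))\,e^{-\int_s^u f(\varphi_{s,r}(x))\,dr}\,P_{u,t}\psi(0)\,du,
\]
(called $P^1_{s,t}\psi + P^2_{s,t}\psi$ there) but reaches it by a longer route: it introduces an auxiliary process $\bar Y$ jumping at constant rate $1$, applies Girsanov's theorem for jump processes to write $P_{s,t}\psi(x)$ as an expectation under $\bar Y$ weighted by the Radon--Nikodym density $\prod_n f(\bar Y_{s,T_n-})\,e^{-\int(f-1)}$, and then conditions on the first jump time $T_1(s)$ of the unit-rate Poisson process. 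Your derivation conditions directly on the first jump time of $Y_{s,\cdot}(x)$ itself, using that the hazard rate before the first jump is the deterministic $f(\varphi_{s,\cdot}(x))$ and that the process is reset to $0$ at the jump; this avoids Girsanov entirely and is arguably more transparent, but the resulting formula and all subsequent estimates (change of variables for the composition with the affine flow, boundedness of $P_{u,t}\psi(0)$ via the uniform bound on $Y_{u,t}(0)$, domination of the polynomial growth of the derivatives of $f\circ\varphi_{s,u}$ by the super-polynomial decay of $e^{-\int f}$ coming from the lower-growth hypothesis $f(x)\ge c_1 x^\beta$, and the compact support of the first term for $\psi\in C^\infty_c$) coincide with the paper's Steps 1--3. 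One presentational remark: when you say the change of variables for $A$ gives the bound ``directly,'' note that the Leibniz terms where the exponential factor $e^{-F_{s,t}(x)}$ is differentiated also produce polynomial growth in $x$ of order up to $k\alpha$ that must be absorbed by the same lower-growth mechanism you invoke for $B$; you do point this out in your final paragraph, but it applies to $A$ as well, and the paper treats it explicitly in its Step 3.
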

The proof of this result is also postponed to the Appendix. 

We notice that $L_s$ is the time dependent infinitesimal generator associated to the time inhomogeneous semigroup $ P_{s, t }, $ that is, 
$$ \frac{d}{ds} P_{s, t } \psi = - L_s P_{s, t } \psi \mbox{ and } \frac{d}{dt} P_{s, t } \psi = P_{s, t }L_t \psi , $$ 
whenever the above quantities are well-defined.

Now we proceed further with our proof. Let $0 \le s \le t  \le T $  be fixed. Consider a test function $ \psi \in C^\infty_c.$ Then we have that 
$$P_{s, t }  \psi (x)  = \psi ( x) -  \int_s^t \frac{\partial}{\partial v } P_{v, t} \psi (x) ) dv  = \psi ( x) + \int_s^t L_v P_{v,t}  \psi  (x) dv $$ 
such that 
\begin{equation}\label{eq:pstpsi}
P_{s,t} \psi = \psi + \int_s^t  L_v P_{v,t} \psi     dv .
\end{equation}
Plugging this into \eqref{eq:tildeeta} and observing that $ \psi $ and $P_{s, t } \psi , $ and thus, a posteriori, also $\int_s^t  L_v P_{v,t} \psi     dv$ are valid test functions, we obtain 
\begin{equation}\label{eq:notgood}
 \int_0^t < \tilde \eta_s, L_s \psi> ds = \int_0^t < \tilde \eta_s, L_s P_{s, t } \psi > ds - \int_0^t \int_s^t < \tilde \eta_s, L_s  L_u P_{ u,t } \psi > du ds.
\end{equation}
Let us consider the double integral appearing in the above expression. By the definition of $ L_s$  and using equation \eqref{eq:expdecrease} of Proposition \ref{prop:Pst}, we know that 
$$ \Psi_{s,  u, t } := L_s  L_u P_{ u,t } \psi  \in C^4_b, \mbox{ satisfying } \lim_{ x\to \infty} x^\gamma | (\Psi_{s,u, t } \psi )^{(k)} )(x)| = 0 ,$$
for all $ 0 \le k \le 4,$ for any $ \gamma > 0.$ This implies that 
$$ \sup_{ s \le u \le t \le T } \|  L_s  L_u P_{ u,t } \psi  \|_{4,  p } = C_T < \infty $$
such that 
$$ |  < \tilde \eta_s, L_s  L_u P_{ u,t } \psi > | \le \| \tilde \eta_s\|_{- 4,  p } \|  L_s  L_u P_{ u,t } \psi  \|_{4,  p } \le C_T  \| \tilde \eta_s\|_{- 4,  p } . $$ 
Since $ \tilde \eta $ takes values in $C ( \R_+, \W_0^{- 4,  p } ), $   $ \sup_{ s \le t }  \| \tilde \eta_s\|_{- 4,  p }  < \infty, $ and therefore we may use Fubini's theorem and obtain 
\begin{equation}\label{eq:good}
 \int_0^t \int_s^t < \tilde \eta_s, L_s L_u P_{u, t }  \psi > du ds = \int_0^t \int_0^u < \tilde \eta_s ,  L_s L_u P_{u, t } \psi > ds du .
\end{equation}   

Now we apply \eqref{eq:tildeeta} to the admissible test function $ \varphi := L_u P_{u, t } \psi , $ for fixed $ u < t ,$ at time $u.$ Then 
$$ < \tilde \eta_u , \varphi > = \int_0^u < \tilde \eta_s , L_s \varphi > ds + R_u,$$
where 
\begin{equation}\label{eq:ru}
R_u =   H_u ( L_u P_{u, t } \psi )  \mbox{ and where we write for short }  H_u ( \cdot) := \int_0^u h_s ( \cdot ) ds .
\end{equation}
Notice that
$$ \int_0^u < \tilde \eta_s , L_s \varphi > ds = \int_0^u < \tilde \eta_s , L_s L_u  P_{u, t } \psi > ds .$$ 
As a consequence, the double integral in \eqref{eq:good} can be rewritten as 
\begin{multline}\label{eq:alsogood}
  \int_0^t \int_0^u < \tilde \eta_s , L_s L_u P_{u, t }  \psi > ds du = \int_0^t < \tilde \eta_u , L_u P_{u, t } \psi > du - \int_0^t R_u du\\
   =\int_0^t < \tilde \eta_s , L_s P_{s, t } \psi > ds - \int_0^t R_u du .
\end{multline}   
In the last line, we have just changed the integration variable $u$ to the new integration variable $s$ such that the comparison to  the first term in the rhs of \eqref{eq:notgood} is easier. Indeed,  we see that \eqref{eq:notgood} together with \eqref{eq:alsogood} now implies that
$$  \int_0^t < \tilde \eta_s, L_s \psi> ds =  \int_0^t R_u du, \mbox{ where, using \eqref{eq:ru}, }
\int_0^t R_u du = \int_0^t H_u (L_u P_{u, t } \psi)  du .
$$

Using the same trick as above, 
$$ H_t (\psi ) = \int_0^t h_s ( \psi) ds = \int_0^t h_s (P_{s,t} \psi) ds - \int_0^t \int_s^t h_s  (  L_v P_{v,t} \psi ) dv ds  .$$ 
Proposition \ref{prop:hs}, with $ k =1, $ $q=2 \alpha $ together with Lemma \ref{prop:5} implies
$$ | h_s ( L_v P_{v, t } \psi )| \le C_t \| L_v P_{v, t } \psi\|_{1, 2 \alpha} \| \tilde \eta_s\|_{- 4,  p } \| f \|_{4,  p }  \le C_t  \|  P_{v, t } \psi\|_{2,   \alpha} \| \tilde \eta_s\|_{- 4,\bar  p } \| f \|_{4, \bar p },$$
which is bounded uniformly in $ 0 \le s \le v \le t , $ due to Proposition \ref{prop:Pst}, since $ \psi \in C_c^\infty. $  Therefore, we may use Fubini's theorem once more to deduce that 
$$ \int_0^t \int_s^t h_s  (  L_v P_{v,t} \psi ) dv ds  = \int_0^t H_v (L_v P_{v, t } \psi ) dv .$$ 
Gathering all these terms, we end up with 
\begin{equation}\label{eq:almostthere}
< \tilde \eta_t , \psi > = \int_0^t h_s ( P_{s,t} \psi) ds .
\end{equation}

We are now ready to finish this proof. Equality \eqref{eq:almostthere} together with Proposition \ref{prop:hs} applied with $k=4$ and $q=p$ and Proposition \ref{prop:Pst} imply 
that for all $ t \le T$ and all $   \psi \in C_c^\infty, $ 
$$ | < \tilde \eta_t , \psi >| \le C_T \| \psi\|_{4,  p } \|f\|_{4,  p } \int_0^t \| \tilde \eta_s\|_{-4, p }   ds . $$
Since $ C^\infty_c $ is dense in $ \W_0^{4,  p },$ this implies
$ \| \tilde \eta_t\|_{- 4,  p } \le C_T \int_0^t  \| \tilde \eta_s\|_{-4, p } ds , $
and Gronwall's lemma implies $  \| \tilde \eta_t\|_{-4, p } = 0 $ for all $ t \geq 0.$   $\qed.$

\section{Decomposition of the fluctuations}\label{sec:3bis}
We now turn to the second main part of this paper and propose a first decomposition of the fluctuation measure $ \eta^N $ for a fixed system size $N.$ The following purely discontinuous martingale, defined for any measurable bounded test function $ \varphi ,$ will play a key role in our study.   
\begin{equation}\label{eq:WtN}
 W_t^N ( \varphi ) = \frac{1}{\sqrt{N}} \sum_{i=1}^N \int_{[0, t ] \times \R_+} \varphi (X^{N,i }_{s-})  \indiq_{\{ z \le f ( X^{N, i }_{s-} ) \}} \tilde \pi^i (ds, dz) ,
\end{equation}
where $ \tilde \pi^i (ds,dz ) = \pi^i (ds, dz) - ds dz$ is the compensated Poisson random measure. Clearly, $ (W_t^N ( \varphi ))_{t \geq 0 } $ is a real valued martingale with angle bracket given by 
\begin{equation}\label{eq:anglebracketwtn}
 < W^N ( \varphi ) >_t = \int_0^t \mu_s^N ( f \varphi^2 ) ds .
\end{equation} 
We obtain the following first decomposition of $ \eta_t^N ( \varphi ), $ for sufficiently smooth test functions $ \varphi.$

\begin{proposition}\label{prop:first}
Grant Assumptions \ref{ass:1} and \ref{ass:2}. Then  for any test function $ \varphi \in C^2_b $ and $t \geq 0, $  
\begin{multline}\label{eq:decomp}
\eta_t^N ( \varphi) = \eta_0^N ( \varphi ) + \int_0^t \eta_s^N ( L_s \varphi ) ds + W_t^N ( R \varphi )\\
+ h \int_0^t \mu_{s-}^N ( \varphi ') d W_s^N ( 1) + h \int_0^t \eta_s^N ( f) \mu_s^N ( \varphi ' ) ds + R^N_t ( \varphi ) ,
\end{multline}
where the remainder term is given by
\begin{multline}\label{eq:RtN}
 R_t^N ( \varphi ) \\
 =  \frac{h }{ N^{3/2} } \sum_{i=1}^N \int_{[0, t ]\times \r_+}\indiq_{\{ z \le f( X^{N, i }_{s-} )\}} \left( 
\left[   \sum_{j=1, j \neq i }^N 
\varphi ' ( X^{N, j }_{s-} + \theta _s \frac{h}{N} ) - \varphi' ( X^{N, j }_{s-} \right]  - \varphi ' (X^{N, i }_{s-} ) \right)  \pi^i (ds, dz)  ,
\end{multline}
for some $ \theta_s \in [0, 1 ] .$ 
\end{proposition}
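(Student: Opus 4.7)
The plan is to apply the jump version of the It\^o formula to $\varphi(X^{N,i}_t)$ for each $i$, average over $i$, perform a Taylor expansion at the small jumps $h/N$, and then subtract the dynamics of $g_t$ obtained from the limit SDE.

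First, applying the It\^o formula to $\varphi(X^{N,i}_t)$ using \eqref{eq:dyn} yields, for each $1\le i\le N$,
\begin{align*}
\varphi(X^{N,i}_t)
&= \varphi(X^{i}_0)-\alpha\int_0^t X^{N,i}_s\varphi'(X^{N,i}_s)\,ds \\
&\quad+\sum_{j\neq i}\int_{[0,t]\times\R_+}\bigl[\varphi(X^{N,i}_{s-}+h/N)-\varphi(X^{N,i}_{s-})\bigr]\indiq_{\{z\le f(X^{N,j}_{s-})\}}\pi^j(ds,dz)\\
&\quad+\int_{[0,t]\times\R_+}R\varphi(X^{N,i}_{s-})\,\indiq_{\{z\le f(X^{N,i}_{s-})\}}\pi^i(ds,dz).
\end{align*}
Summing over $i$, dividing by $N$, and using Taylor's formula with integral remainder in the form $\varphi(X^{N,i}_{s-}+h/N)-\varphi(X^{N,i}_{s-})=\frac{h}{N}\varphi'(X^{N,i}_{s-})+\frac{h}{N}\bigl[\varphi'(X^{N,i}_{s-}+\theta_s h/N)-\varphi'(X^{N,i}_{s-})\bigr]$ turns the middle double sum into a ``principal'' part plus a correction of order $1/N^2$. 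The principal part, after exchanging the roles of $i$ and $j$, is $\frac{h}{N}\sum_j\int \mu^N_{s-}(\varphi')\indiq_{\{z\le f(X^{N,j}_{s-})\}}\pi^j(ds,dz)$ minus the diagonal $\frac{h}{N^2}\sum_j\int\varphi'(X^{N,j}_{s-})\indiq_{\{z\le f(X^{N,j}_{s-})\}}\pi^j(ds,dz)$. The latter, together with the Taylor correction, assembles exactly into $N^{-1/2}R^N_t(\varphi)$ as defined in \eqref{eq:RtN}.

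Next I would compensate the remaining Poisson integrals by writing $\pi^j=\tilde\pi^j+ds\,dz$. This produces on the martingale side $N^{-1/2}W^N_t(R\varphi)$ for the self-jump term and $h\,N^{-1/2}\int_0^t\mu^N_{s-}(\varphi')\,dW^N_s(1)$ for the principal part of the cross-jump term, while the compensator contributes $\int_0^t\mu^N_s(fR\varphi)\,ds$ and $h\int_0^t\mu^N_s(\varphi')\mu^N_s(f)\,ds$. Collecting these with the drift $-\alpha\int_0^t\mu^N_s(x\varphi')\,ds$ gives
\begin{align*}
\mu^N_t(\varphi)&=\mu^N_0(\varphi)+\int_0^t\mu^N_s\bigl(-\alpha x\varphi'+fR\varphi\bigr)ds+h\int_0^t\mu^N_s(\varphi')\mu^N_s(f)\,ds\\
&\quad+\tfrac{h}{\sqrt N}\int_0^t\mu^N_{s-}(\varphi')\,dW^N_s(1)+\tfrac{1}{\sqrt N}W^N_t(R\varphi)+\tfrac{1}{\sqrt N}R^N_t(\varphi).
\end{align*}
The key algebraic trick is then to replace $h\mu^N_s(\varphi')\mu^N_s(f)=h\,g_s(f)\mu^N_s(\varphi')+\tfrac{h}{\sqrt N}\eta^N_s(f)\mu^N_s(\varphi')$, so that the first two drift terms combine into $\int_0^t\mu^N_s(L_s\varphi)\,ds$ with $L_s$ as in \eqref{eq:generator}.

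Finally, taking expectation in the It\^o formula for $\bar X^1$ shows that $g_t$ satisfies $g_t(\varphi)=g_0(\varphi)+\int_0^t g_s(L_s\varphi)\,ds$ for $\varphi\in C^2_b$. Subtracting this from the expression for $\mu^N_t(\varphi)$ and multiplying by $\sqrt N$ yields \eqref{eq:decomp}. The only delicate point is making sure that all Poisson integrals above are genuine martingales (so the compensation step is justified); this follows from the a priori $L^p$ bounds of Lemma \ref{lemma:apriori} together with Assumption \ref{ass:1}, which guarantee that $\E\int_0^t\mu^N_s(f\varphi^2)\,ds<\infty$ for $\varphi\in C^2_b$. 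The remaining verification is purely the bookkeeping of the Taylor remainder, which I expect to be the main place where care is needed to obtain exactly the form \eqref{eq:RtN} with the sign convention on the diagonal term $-\varphi'(X^{N,i}_{s-})$.
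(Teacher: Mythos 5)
Your proposal is correct and follows essentially the same route as the paper: It\^o's formula for $\varphi(X^{N,i}_t)$ applied to \eqref{eq:dyn}, averaging over $i$, mean-value Taylor expansion of the small cross-jump $h/N$ (with the diagonal $j=i$ term re-inserted to produce $\mu^N_{s-}(\varphi')$ exactly and then subtracted back off inside $R^N_t$), compensation of the Poisson measures to isolate $W^N_t(R\varphi)$ and $\int_0^t\mu^N_{s-}(\varphi')\,dW^N_s(1)$, the split $h\,\mu^N_s(\varphi')\mu^N_s(f)=h\,g_s(f)\mu^N_s(\varphi')+\tfrac{h}{\sqrt N}\eta^N_s(f)\mu^N_s(\varphi')$ to assemble $\mu^N_s(L_s\varphi)$, and finally subtraction of $g_t(\varphi)=g_0(\varphi)+\int_0^t g_s(L_s\varphi)\,ds$ multiplied by $\sqrt N$. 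The paper's own proof is just a compressed version of these same steps; your bookkeeping of the $\sum_{j\neq i}\to\sum_j$ relabelling and the sign of the diagonal correction in $R^N_t$ is accurate, and the integrability remark invoking Lemma \ref{lemma:apriori} correctly justifies the compensation step.
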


\begin{proof}
Using Taylor's formula at order two, we obtain for any $ \varphi \in C_b^2 ,$ 
\begin{eqnarray*}
\mu_t^N (\varphi) &=& \mu^N_0 ( \varphi ) - \alpha \int_0^t \mu_s^N ( \varphi' \cdot x ) ds + \frac{1}{\sqrt{N}} W_t^N ( R \varphi )+ \int_0^t \mu_s^N ( f R \varphi)ds  \\
&&+  \frac{h}{\sqrt{N}} \int_{[0, t ]} \mu_{s-}^N ( \varphi ') d W_s^N ( 1) 
+ \int_0^t \mu_s^N (f) h \mu_s^N ( \varphi ' ) ds + \frac{1}{\sqrt{N}}  R_t^N (\varphi ) \\
&=&   \mu^N_0 ( \varphi )  + \int_0^t \mu_s^N ( L_s \varphi ) ds  + \frac{1}{\sqrt{N}} W_t^N ( R \varphi ) \\
&&+  \frac{h}{\sqrt{N}} \int_0^t \mu_{s-}^N ( \varphi ') d W_s^N ( 1) 
+h \int_0^t ( \mu_s^N ( f) - g_s (f) ) \mu_s^N ( \varphi' ) ds 
+\frac{1}{\sqrt{N}} R_t^N (\varphi ) .
\end{eqnarray*}
 In the above development we have used that 
$$ \frac{1}{N} \sum_{i=1}^N \int_{[0, t ] \times \R_+} \indiq_{\{ z \le f (X^{N, i }_{s-} ) \}} \left( \frac{h}{N} \sum_{j=1}^N \varphi ' ( X^{N, j }_{s-} ) \right) \tilde \pi^i ( ds, dz) = \frac{h}{\sqrt{N}} \int_0^t \mu_{s-}^N ( \varphi ') d W_s^N ( 1) .$$

Using that
$ g_t ( \varphi ) = g_0 ( \varphi ) + \int_0^t g_s ( L_s \varphi ) ds ,$
By Ito's formula, we obtain the result.
\end{proof}

We now give estimates of the terms $ \eta^N , W^N , R^N$ appearing in \eqref{eq:RtN} above, interpreted as elements of $ \W_0^{- k , p}, $ for the smallest possible $k, p .$  This will be useful later to deduce the tightness of these processes. 

\begin{proposition}\label{prop:8}
Grant Assumptions \ref{ass:1} and \ref{ass:2}. Then for any $ p > 1/2 $ and any $ T > 0, $  
$$ \sup_{t \le T} \sup_N \E ( \| \eta_t^N\|_{ - 2, p } ) < \infty .$$ 
\end{proposition}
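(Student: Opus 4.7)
The plan is to control $\|\eta_t^N\|_{-2,p}$ by decomposing $\eta_t^N$ via the Sznitman coupling into an i.i.d.\ empirical fluctuation plus a coupling defect, and treat each piece separately using the Hilbertian structure of $\W_0^{-2,p}$ together with Lemma \ref{lem:6} and Lemma \ref{lemma:apriori}. Concretely, write
$$\eta_t^N = B_t^N + A_t^N, \quad B_t^N := \frac{1}{\sqrt N}\sum_{i=1}^N(\delta_{\bar X_t^i}-g_t), \quad A_t^N := \frac{1}{\sqrt N}\sum_{i=1}^N(\delta_{X_t^{N,i}}-\delta_{\bar X_t^i}).$$

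For $B_t^N$ I would exploit that $\W_0^{2,p}$ is a separable Hilbert space. Pick any orthonormal basis $(e_j)_j$; then for every $T \in \W_0^{-2,p}$, $\|T\|_{-2,p}^2 = \sum_j |T(e_j)|^2$. Since the $\bar X_t^i$ are i.i.d.\ with law $g_t$, $B_t^N(e_j)$ is the centered sum $N^{-1/2}\sum_i (e_j(\bar X_t^i)-g_t(e_j))$, so $\E|B_t^N(e_j)|^2 \le \E e_j(\bar X_t^1)^2$. Summing and using Tonelli,
$$\E\|B_t^N\|_{-2,p}^2 \le \E\sum_j e_j(\bar X_t^1)^2 = \E\|\delta_{\bar X_t^1}\|_{-2,p}^2.$$
Lemma \ref{lem:6} combined with the continuous embedding $\W_0^{-1,p}\hookrightarrow \W_0^{-2,p}$ (which follows from $\|\psi\|_{1,p}\le\|\psi\|_{2,p}$) yields $\|\delta_x\|_{-2,p}\le C(1+|x|^p)$, and \eqref{eq:aprioribound2} gives $|\bar X_t^1|\le \bar C_T$; hence $\E\|B_t^N\|_{-2,p}^2 \le C_T$, and Jensen's inequality provides the required $L^1$ bound.

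For $A_t^N$ I would use the direct duality bound. The second half of Lemma \ref{lem:6} gives $|\psi'(x)|\le C(1+|x|^p)\|\psi\|_{2,p}$, so by integrating along a segment,
$$|\psi(x)-\psi(y)| \le C\|\psi\|_{2,p}\,|x-y|\,(1+\max(|x|,|y|)^p),$$
whence $\|\delta_x-\delta_y\|_{-2,p}\le C|x-y|(1+\max(|x|,|y|)^p)$. Triangle inequality and exchangeability then give
$$\E\|A_t^N\|_{-2,p}\le C\sqrt N\, \E\!\left[(1+\max(|X_t^{N,1}|,|\bar X_t^1|)^p)\,|X_t^{N,1}-\bar X_t^1|\right].$$
To absorb the $\sqrt N$ I would imitate Corollary \ref{cor:tvdistance} and split the expectation on the good event $G_T^N$ of \eqref{eq:gtn}. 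On $G_T^N$, estimate \eqref{eq:aprioriboundgt} gives a deterministic bound $|X_t^{N,1}|\le c_0+8hf(2h)T$, so the weight factor is controlled by some $C_T$, and \eqref{eq:prop1} yields $\E[\cdots\indiq_{G_T^N}]\le C_T/\sqrt N$. On $(G_T^N)^c$, H\"older's inequality applied with the moment bound \eqref{eq:aprioribound1} and the sub-exponential decay $\P((G_T^N)^c)\le a e^{-bNT}$ shows that the remainder is $o(1/\sqrt N)$. Multiplying by $\sqrt N$ gives $\E\|A_t^N\|_{-2,p}\le C_T$, uniformly in $N$ and $t\in[0,T]$.

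The main obstacle is the $A_t^N$ term: a na\"\i ve Cauchy--Schwarz would require the $L^2$-coupling estimate $\E|X_t^{N,1}-\bar X_t^1|^2\le C_T/N$, which is stronger than the $L^1$ bound stated in \eqref{eq:prop1}. The workaround, exactly as in Corollary \ref{cor:tvdistance}, is to combine the $L^1$ bound of \eqref{eq:prop1} with the pathwise $L^\infty$ control on $G_T^N$ furnished by Lemma \ref{lemma:apriori}, which allows the dangerous polynomial weight $|X_t^{N,1}|^p$ to be treated as a constant on the good set while the bad set is killed by the exponential tail.
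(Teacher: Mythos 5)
Your decomposition $\eta_t^N=B_t^N+A_t^N$ is exactly the paper's $\eta_t^{N,2}+\eta_t^{N,1}$, and both pieces are treated by the same means: the orthonormal-basis/Parseval argument plus Lemma \ref{lem:6} and \eqref{eq:aprioribound2} for $B_t^N$, and the weighted-Lipschitz duality bound followed by the $G_T^N$ good/bad-event split (using \eqref{eq:aprioriboundgt}, \eqref{eq:prop1}, \eqref{eq:aprioribound1} and the exponential tail of $(G_T^N)^c$) for $A_t^N$. The proposal is correct and follows essentially the same route as the paper; you also correctly identify, as in Remark~\ref{rem:2}, that the absence of an $L^2$ coupling estimate is precisely why the good-set/bad-set trick is needed.
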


\begin{remark}\label{rem:2} 
We stress that we obtain a weaker result than the corresponding Proposition 3.5 in \cite{fernandez} or Proposition 4.7 in \cite{chevallier} since we are not able to control the expectation of the square of the norm $\E ( \| \eta_t^N\|^2_{ - 2, p } ).$  This is due to two facts. 

{\bf Fact 1.} We are  working in the framework of point processes, not of diffusions. Therefore, the control 
$$ \E | \bar X^i_t - X_t^{N, i } | \le C_T N^{-1/2} $$
given in \eqref{eq:prop1} cannot be improved to higher order moments of the strong error as in \cite{fernandez}. This intrinsic difficulty is common to any study of point processes.

{\bf Fact 2.} Julien Chevallier in \cite{chevallier} proposes to remediate this difficulty by considering rather higher order moments of the total variation distance; that is, proving and exploiting the fact that 
$$ \P ( \| Z^{N, i } - \bar Z^i \|_{TV, [0, T]} \neq 0 \mbox{ for all } 1 \le i \le k )  \le C_T N^{- k/2}.$$
However, in our model, even on $\{\| Z^{N, i } - \bar Z^i \|_{TV, [0, T]} = 0 \} ,$ the two processes do not couple since they are driven by two different drift terms. This is a crucial difference with the age-structured Hawkes process where the drift is always $ \equiv 1, $ independently of anything else (compare more precisely to (A.10) of \cite{chevallier}). It is for the same reason that we have to take test functions that are twice continuously differentiable, such that we work in $ \W_0^{- 2, p } .$ 
\end{remark}

\begin{corollary}
Since $f \in C^{2, \alpha} \subset \W_0^{2, p } $ for any $ p > \alpha + \frac12, $ such that $ | \eta_t^N ( f) | \le \| \eta_t^N\|_{- 2 ,p } \| f \|_{2, p }, $ we deduce from Proposition \ref{prop:8} the useful upper bound 
\begin{equation}\label{eq:etatNf}
\sup_{t \le T} \sup_N \E ( | \eta_t^N( f) | ) < \infty.
\end{equation}
\end{corollary}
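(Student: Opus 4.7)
The plan is straightforward: the corollary is essentially a one-line consequence of Proposition~\ref{prop:8} combined with the duality inequality, provided we confirm that $f$ is a legitimate test function in $\W_0^{2,p}$.

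First I would verify that $f\in\W_0^{2,p}$ for every $p>\alpha+\tfrac12$. By Assumption~\ref{ass:1}, each derivative $f^{(l)}$ for $l\le 2$ satisfies $|f^{(l)}(x)|\le C(1+|x|^\alpha)$, so $f\in C^{2,\alpha}$. The weighted Sobolev norm then estimates as
\[
\|f\|_{2,p}^2 \;=\; \sum_{l=0}^{2}\int_0^\infty \frac{|f^{(l)}(x)|^2}{1+|x|^{2p}}\,dx \;\le\; C\sum_{l=0}^{2}\int_0^\infty \frac{(1+|x|^\alpha)^2}{1+|x|^{2p}}\,dx,
\]
and the integrals on the right are finite precisely because $2p-2\alpha>1$. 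This is exactly the embedding $C^{2,\alpha}\subset \W_0^{2,p}$ already invoked in the excerpt (e.g.\ for $k=4$ in the proof of Proposition~\ref{prop:hs}), so I would simply cite it.

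Second, having $f\in\W_0^{2,p}$, the definition of the dual norm gives the duality bound
\[
|\eta_t^N(f)| \;=\; |\langle \eta_t^N, f\rangle| \;\le\; \|\eta_t^N\|_{-2,p}\,\|f\|_{2,p},
\]
the pairing agreeing with the direct definition $\eta_t^N(f)=\sqrt{N}(\mu_t^N(f)-g_t(f))$ thanks to the moment bound \eqref{eq:aprioribound1} and the polynomial growth of $f$. Taking expectations, using that $\|f\|_{2,p}$ is a deterministic finite constant, and then applying Proposition~\ref{prop:8},
\[
\sup_{t\le T}\sup_N \E|\eta_t^N(f)| \;\le\; \|f\|_{2,p}\,\sup_{t\le T}\sup_N \E\|\eta_t^N\|_{-2,p} \;<\;\infty,
\]
which is the claimed bound \eqref{eq:etatNf}. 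There is no real obstacle here; the only point to double-check is the embedding $C^{2,\alpha}\subset \W_0^{2,p}$, which is standard and already used elsewhere in the paper.
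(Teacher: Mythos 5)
Your proposal is correct and follows exactly the paper's argument: the corollary is proved in the paper by the same two observations, namely the embedding $C^{2,\alpha}\subset \W_0^{2,p}$ for $p>\alpha+\frac12$ (so that $f$ is an admissible test function) and the duality bound $|\eta_t^N(f)|\le \|\eta_t^N\|_{-2,p}\|f\|_{2,p}$ combined with Proposition \ref{prop:8}. Nothing further is needed.
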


\begin{proof}[Proof of Proposition \ref{prop:8}]
Let $ \bar X^{i, }, 1 \le i \le N, $ be independent copies of the limit system \eqref{eq:dynlimit}, driven by the same Poisson random measures as $ X^{N, i }, 1 \le i  \le N, $ and starting from the same initial positions $ X^i_0 , 1 \le i \le N, $ as the finite system. We decompose, for any $ \psi \in \W_0^{2, p }, $  
$$ \eta_t^N ( \psi) = \sqrt{N} \left( \frac1N \sum_{i=1}^N [\psi ( X^{N, i }_t ) - \psi ( \bar X^i_t) ] + [ \psi ( \bar X^i_t) - \E ( \psi ( \bar X^i_t) ) ] \right) =: \eta_t^{N, 1} ( \psi) + \eta_t^{N,2} ( \psi) ,$$
such that  $ \| \eta_t^N\|_{-2, p } \le \| \eta_t^{N,1} \|_{- 2, p } + \| \eta_t^{N,2} \|_{-2,p }.$  

{\bf Step 1.} We take an orthonormal basis $(\psi_k)_k $ composed of $C^\infty_c-$ functions of $ \W_0^{2, p} $ such that 
$$ \| \eta_t^{N,2} \|^2_{-2, p } = \sum_k {< \eta_t^{N,2} , \psi_k >}^2.$$ 
Using the independence of the $ \bar X^i, i \geq 1, $ we have 
$$
\E (< \eta_t^{N,2} , \psi_k >^2)  =  \E ( \frac1N \sum_{i=1}^N [ \psi_k ( \bar X^i_t) - \E ( \psi_k ( \bar X^i_t) ) ]^2 ) = \E(  [ \psi_k ( \bar X^1_t) - \E ( \psi_k ( \bar X^1_t) ) ]^2) \le \E (\psi_k^2 ( \bar X^1_t) ) .$$ 
Observing that 
$  \E (\psi_k^2 ( \bar X^1_t) ) = 
\E ( < \delta_{\bar X^1_t}, \psi_k>^2) ,$
we obtain by monotone convergence
\begin{multline*}
 \E \| \eta_t^{N,2}\|^2_{-2, p } =   \E \left(\sum_k (< \eta_t^{N,2} , \psi_k >^2)\right)   = \sum_k \E  (< \eta_t^{N,2} , \psi_k >^2)  \le \sum_k \E (\psi_k^2 ( \bar X^1_t) )\\
=  \sum_k  \E ( < \delta_{\bar X^1_t}, \psi_k>^2) = \E \left(\sum_k  < \delta_{\bar X^1_t}, \psi_k>^2 \right)= 
\E \|\delta_{\bar X^1_t} \|^2_{- 2, p } .
\end{multline*}
Thanks to \eqref{eq:deltax} together with \eqref{eq:goodtoknow}, we have that 
$\|\delta_{\bar X^1_t} \|_{- 2, p } \le C ( 1 + |\bar X^1_t|^p ) \le \bar C_T,$
where we have used the a priori estimate \eqref{eq:aprioribound2}. As a consequence,  
$$ \sup_{ t \le T } \sup_N \E \| \eta_t^{N,2} \|_{-2, p }  \le C_T  \| \psi \|_{2, p }  .$$

{\bf Step 2.}
We now study the first term. For any $ \psi \in \W_0^{2, p} ,$ using that for any $x, y \geq 0, $ by Taylor's formula and the Sobolev embedding, 
$$ | \psi ( x) - \psi (y) | \le C \| \psi\|_{C^{1, p }} ( 1 + |x|^p + |y|^p ) |x-y| \le C \| \psi\|_{2, p }  ( 1 + |x|^p + |y|^p ) |x-y|,$$
we obtain, using the upper bound \eqref{eq:aprioribound2}, 
$$
 | \psi ( X^{N, i }_t ) - \psi ( \bar X^i_t)|  \le \bar C_T  \| \psi \|_{2, p }  ( 1 + | X^{N, i }_t|^p)  | X^{N,i }_t - \bar X^i_t| ,
$$
such that 
$$\| \eta_t^{N,1} \|_{-2, p } =  \sup_{ \psi :  \| \psi \|_{2, p } = 1} | \eta_t^{N,1} ( \psi ) |\le \bar C_T \frac{1}{ \sqrt{N} } \sum_{i=1}^N  ( 1 + | X^{N, i }_t|^p  ) | X^{N,i }_t - \bar X^i_t|  .$$

Recall the set $G_T^N $ introduced in \eqref{eq:gtn} above. 
On the set $ G^N_T ,$ using \eqref{eq:aprioriboundgt}, we have that $  \sup_{t \le T } | X^{N, i }_t|^p  \le C_T ,$ whence
$$  \E ( \| \eta_t^{N,1} \|_{-2, p } ; G_T ) \le \frac{C_T}{\sqrt{N} }   \sum_{i=1}^N \E (  | X^{N, i }_t - \bar X^i_t| ) .$$
We then deduce from  \eqref{eq:prop1} that
$$  \E ( \| \eta_t^{N,1} \|_{- 2, p } ; G^N_T ) \le C_T  . $$
 
Moreover, on $(G_T^N)^c,$ we simply upper bound, using once more \eqref{eq:aprioribound2}, 
$$ \| \eta_t^{N,1} \|_{-2, p } \le \bar C_T \frac{1}{ \sqrt{N} } \sum_{i=1}^N  ( 1 + | X^{N, i }_t|^p  ) \left( X^{N,i }_t + \bar C_T  \right)  \le C_T \frac{1}{ \sqrt{N} } \sum_{i=1}^N  ( 1 + | X^{N, i }_t|^{p+1}  ),$$
where we recall that constants may change from one appearance to another and where we have used that $ (1 + x^p) (1+x) \le C (1+x^{p+1}), $ for a suitable constant.
Therefore,  
$$ \E (\| \eta_t^{N,1} \|_{-2, p } ; (G_T^N)^c) \le C_T \sqrt{N} \E ( (1 + | X^{N, i }_t|^{p+1}  ) \indiq_{(G_T^N)^c} ) .$$ 
Using \eqref{eq:aprioribound1} with $2(p+1) $ and the Cauchy-Schwartz inequality together with $ \mathbb{P} ( (G_T^N)^c) \le a e^{ -b N T},$
this gives
$$ \E (\| \eta_t^{N,1} \|_{-2, p }; (G_T^N)^c ) \le C_T \sqrt{N} e^{ - (b/2)  NT}.$$ 
All in all we therefore get 
$$ \sup_N \sup_{ t \le T}  \E\| \eta_t^{N,1} \|_{-2, p }  \le C_T ,$$
and this concludes the proof. 
\end{proof}

\begin{proposition}\label{prop:9}
Under Assumptions \ref{ass:1} and \ref{ass:2}, for any $ p > 1/2 ,$ the process $ W^N_t $ is a $ ({\mathcal F}_t)_{t \geq 0} -$martingale with paths in $ D ( \R_+, \W_0^{- 1, p } ) $ almost surely. Furthermore, 
\begin{equation}\label{eq:411}
 \sup_N \E \left( \sup_{t \le T } \| W^N_t\|^2_{ - 1, p } \right) < \infty .
\end{equation}
\end{proposition}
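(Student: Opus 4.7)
The plan is to combine Parseval's identity with a coordinate-wise Doob $L^2$ maximal inequality. Since $\W_0^{1,p}$ is a separable Hilbert space and $C^\infty_c$ is dense in it, I fix an orthonormal basis $(\psi_k)_{k\geq 1}\subset C^\infty_c$ of $\W_0^{1,p}$. For each $k$, the real-valued process $(W^N_t(\psi_k))_{t\geq 0}$ is a purely discontinuous square-integrable martingale with predictable quadratic variation given by \eqref{eq:anglebracketwtn}, and Doob's inequality yields
\begin{equation*}
\E\sup_{t\leq T}(W^N_t(\psi_k))^2 \leq 4\,\E\int_0^T \mu^N_s(f\psi_k^2)\,ds.
\end{equation*}

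Parseval gives $\|W^N_t\|^2_{-1,p}=\sum_k (W^N_t(\psi_k))^2$; using $\sup_t\sum_k a_k(t)\leq \sum_k\sup_t a_k(t)$ for non-negative terms together with Tonelli, I obtain
\begin{equation*}
\E\sup_{t\leq T}\|W^N_t\|^2_{-1,p}\leq 4\,\E\int_0^T\mu^N_s\Bigl(f(x)\sum_k\psi_k^2(x)\Bigr)\,ds.
\end{equation*}
Parseval applied to the functional $\delta_x\in\W_0^{-1,p}$ (which belongs to this space precisely when $p>1/2$) identifies $\sum_k\psi_k^2(x)=\|\delta_x\|^2_{-1,p}$, and Lemma \ref{lem:6} bounds this by $C(1+|x|^{2p})$. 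Combined with $f(x)\leq C(1+|x|^\alpha)$ from Assumption \ref{ass:1}, the right-hand side is at most $C'\,N^{-1}\sum_{i=1}^N\E\int_0^T(1+|X^{N,i}_s|^{2p+\alpha})\,ds$, which is uniformly bounded in $N$ by the a priori moment estimate \eqref{eq:aprioribound1}.

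For the càdlàg property, I decompose $W^N_t$ as the sum of a pure-jump part $\frac{1}{\sqrt{N}}\sum_i\sum_{s\leq t,\,\Delta Z^{N,i}_s\neq 0}\delta_{X^{N,i}_{s-}}$ and a continuous drift $-\frac{1}{\sqrt{N}}\int_0^t\sum_i f(X^{N,i}_u)\delta_{X^{N,i}_u}\,du$, viewed as Bochner-valued in $\W_0^{-1,p}$. On $[0,T]$ the spike set is finite a.s., each atom lies in $\W_0^{-1,p}$ by Lemma \ref{lem:6}, and the drift is well-defined and continuous in $t$ thanks to $\|\delta_x\|_{-1,p}\leq C(1+|x|^p)$ and the a priori moment control. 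This yields $W^N\in D(\R_+,\W_0^{-1,p})$ a.s., and the Hilbert-valued martingale property then follows from the coordinate-wise martingale property together with the uniform $L^2$ bound just established, by a standard approximation argument using finite linear combinations of basis elements.

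The main technical step is the identification $\sum_k\psi_k^2(x)=\|\delta_x\|^2_{-1,p}$ together with the polynomial upper bound of Lemma \ref{lem:6}: this is precisely what links the weight exponent $p>1/2$ to the summability of the Parseval series, and once it is in place, the remaining ingredients (Doob, Tonelli, and the a priori moment bounds) are all routine.
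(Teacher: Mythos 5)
Your proposal is correct and follows essentially the same approach as the paper: orthonormal basis of $C^\infty_c$ functions, coordinate-wise Doob, Parseval to identify $\sum_k\psi_k^2(x)=\|\delta_x\|^2_{-1,p}$, the bound from Lemma \ref{lem:6}, and the a priori moment estimate \eqref{eq:aprioribound1}. The only cosmetic difference is that the paper invokes exchangeability to replace $\mu^N_s$ by the single particle $X^{N,1}_s$, which you avoid by carrying the empirical average through to the end; and for the c\`adl\`ag/martingale property the paper simply refers to item (ii) of Proposition 4.7 in \cite{chevallier}, while you sketch the argument directly.
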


\begin{proof}
Take an orthonormal basis $ (\psi_k)_{ k \geq 1 }$ of $ \W_0^{1, p}, $ composed of $ C^\infty_c -$functions, and use that 
$$\sup_{ t \le T }  \| W_t^N \|^2_{-1, p } = \sup_{t \le T }  \sum_{ k \geq 1} (W_t^N ( \psi_k))^2 \le \sum_k \sup_{t \le T } (W_t^N ( \psi_k))^2 .$$
As a consequence, by Doob's inequality and monotone convergence, and relying on \eqref{eq:anglebracketwtn},
\begin{multline*}
 \E ( \sup_{ t \le T }  \| W_t^N \|^2_{-1, p } ) \le 4 \sum_k \E (W_T^N ( \psi_k))^2= 4 \sum_k \E \int_0^T \mu_s^N ( f \psi^2_k)ds = 4 \sum_k \E \int_0^T f ( X_s^{N, 1 })  \psi_k^2 ( X_s^{N, 1 } ) ds  \\
 =  4 \E \int_0^T  f ( X_s^{N, 1 }) \sum_k \psi_k^2 ( X_s^{N, 1} ) ds = 4  \E \int_0^T  f ( X_s^{N, 1 }) \| \delta_{X_s^{N, 1 }} \|^2_{ - 1, p } ds ,
\end{multline*} 
where we have used the exchangeability of the finite system to obtain the last term of the first line.

By Lemma \ref{lem:6}, there exists a constant not depending on $ X_s^{N, 1 } $ such that 
$  \| \delta_{X_s^{N, 1 }} \|^2_{ - 1, p }\le C  ( 1 + | X_s^{N, 1} |^{2p } ) .$
Moreover, $  f ( X_s^{N, 1 }) \le C ( 1 + |X_s^{N, 1 }|^\alpha) .$ 
Using \eqref{eq:aprioribound1} with $ 2p +\alpha, $ this implies \eqref{eq:411}. 

Once  \eqref{eq:411} is checked, the remainder of the proof follows the lines of the proof of Proposition 4.7, item (ii) of \cite{chevallier}. 
\end{proof}

We now  check that 

\begin{proposition}\label{prop:10}
Grant Assumptions \ref{ass:1} and \ref{ass:2}. For all $p > 1/2$  we have 
$$ \sup_N \sqrt{N} \E ( \sup_{t \le T }  \| R_t^N\|_{- 3, p } ) < \infty .$$
\end{proposition}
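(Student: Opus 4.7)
My plan is to decompose the integrand of $R_t^N(\varphi)$ into two natural pieces. By the mean value theorem
\[
\varphi'(X^{N,j}_{s-}+\theta_s h/N)-\varphi'(X^{N,j}_{s-})=(\theta_s h/N)\varphi''(\xi_{s,j}),\qquad \xi_{s,j}\in[X^{N,j}_{s-},X^{N,j}_{s-}+h/N],
\]
so the contribution of the $j\neq i$ sum carries a free factor $1/N$; call this piece $R_t^{N,1}(\varphi)$, with overall prefactor $h^2/N^{5/2}$ and a double sum over $(i,j)$. The second piece, call it $R_t^{N,2}(\varphi)$, is the isolated $-\varphi'(X^{N,i}_{s-})$ term, with prefactor $h/N^{3/2}$ and a single sum over $i$.

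For both pieces the strategy is identical. The aim is to produce a \emph{pointwise} domination of the form $|R_t^{N,\ell}(\varphi)|\le C\|\varphi\|_{3,p}A_t^{N,\ell}$ where $A_t^{N,\ell}$ is nonnegative and nondecreasing in $t$. The essential input is the Sobolev embedding $\W_0^{3,p}\hookrightarrow C^{2,p}$ from the Appendix (used already at order two in the proof of Lemma~\ref{lem:6}), which gives $|\varphi^{(k)}(x)|\le C\|\varphi\|_{3,p}(1+|x|^p)$ for $k\in\{1,2\}$, together with $|\xi_{s,j}|\le |X^{N,j}_{s-}|+h$. Since the resulting bound is linear in $\|\varphi\|_{3,p}$ and monotone in $t$, taking the supremum over $\|\varphi\|_{3,p}=1$ and over $t\le T$ is immediate, yielding $\sup_{t\le T}\|R^{N,\ell}_t\|_{-3,p}\le A^{N,\ell}_T$.

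I would then take expectation of $A^{N,\ell}_T$ by passing from $\pi^i$ to its compensator $ds\,dz$, which replaces $\indiq_{z\le f(X^{N,i}_{s-})}$ by $f(X^{N,i}_s)$. For $R^{N,1}$ the double sum produces $N(N-1)$ terms and a residual prefactor $h^2(N-1)/N^{3/2}$; the surviving expectation $\E\int_0^T f(X^{N,1}_s)(1+|X^{N,2}_s|^p)ds$ is controlled by $f(x)\le C(1+|x|^\alpha)$, Cauchy--Schwarz and the moment bound \eqref{eq:aprioribound1}, giving $O(N^{-1/2})$. For $R^{N,2}$ the single sum gives prefactor $h/\sqrt N$ and the same moment bounds yield $O(N^{-1/2})$. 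Summing the two contributions and multiplying by $\sqrt N$ concludes.

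I do not foresee a genuine obstacle; the argument is essentially bookkeeping. The structural point worth highlighting is why the rate is exactly $1/\sqrt N$: in $R^{N,1}$ only one of the two indices $(i,j)$ can be averaged against the compensator, while the other remains a bare sum over $N$ neurons, and this asymmetry creates the residual $1/\sqrt N$. Working directly with $\pi^i$ rather than with $\tilde\pi^i$ is what makes the pointwise bound monotone in $t$, so that the supremum enters the expectation harmlessly and no Doob or BDG argument is required.
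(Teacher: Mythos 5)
Your proof is correct and follows essentially the same route as the paper: Taylor/mean-value estimates on $\varphi',\varphi''$ via the Sobolev embedding $\W_0^{3,p}\hookrightarrow C^{2,p}$, a pointwise bound on $\sqrt{N}|R_t^N(\varphi)|$ linear in $\|\varphi\|_{3,p}$ and monotone in $t$ (because one integrates against the nonnegative measure $\pi^i$, not $\tilde\pi^i$), then expectation via the compensator and the moment bound \eqref{eq:aprioribound1}. The only cosmetic difference is that you name the split $R^{N,1}+R^{N,2}$ and the Cauchy--Schwarz step explicitly, whereas the paper keeps both pieces inside a single displayed inequality and leaves the expectation bookkeeping implicit.
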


\begin{proof}
Let $ \psi \in \W_0^{3, p } $ and recall that, by Taylor's formula and the Sobolev embedding, 
$$ | \psi' ( x + \theta \frac{h}{N} ) - \psi'( x)| \le  C \| \psi\|_{C^{2, p } } ( 1 + x^p ) \frac{h}{N} \le C \| \psi\|_{3, p } (1+x^p ) \frac{h}{N}.$$
Therefore, 
$$
\sqrt{N}  | R_t^{N } ( \psi ) |  \le  C \| \psi\|_{ 3, p}
 \frac{h }{ N } \sum_{i=1}^N \int_{[0, T ]\times \r_+} \indiq_{\{ z \le f( X^{N, i }_{s-} )\}} \left( 
\frac{h}{N}   \sum_{j=1}^N 
(1 + |X^{N, j }_{s- } |^p) + | X^{N, i }_{s- } |^p  \right) ,
$$
where we have also used that  
$  |\psi' (x) | \le C \| \psi\|_{C^{1, b }} ( 1 + |x|^p ) \le C \| \psi\|_{3, p}( 1 + |x|^p ) .$ This implies 
\begin{multline*}
\sqrt{N}  \sup_{ t \le T } \| R_t^N \|_{ - 3, p } =  \sup_{t \le T } \sup_{ \psi :  \| \psi \|_{3, p } = 1 } \sqrt{N}  | R_t^{N } ( \psi ) |  \le \\
 C \frac{h }{ N } \sum_{i=1}^N \int_{[0, T ]\times \r_+} \indiq_{\{ z \le f( X^{N, i }_{s-} )\}} \left( 
\frac{h}{N}   \sum_{j=1}^N 
(1 + |X^{N, j }_{s- } |^p) + | X^{N, i }_{s- } |^p  \right) .
\end{multline*}
Taking expectation and using the a priori bound \eqref{eq:aprioribound1} together with $ f( x) \le C (1 + x^\alpha) $ yields the result.
\end{proof}

Finally, recall that $ D : \psi \to \psi' $ denotes the differential operator, and $ D^* $ the associated dual. 

\begin{lemma}\label{lemma:12}
Fix  $ p >1/2.$ Then under Assumptions \ref{ass:1} and \ref{ass:2}, the mapping defined by $ \psi \mapsto \eta^N_s ( f) D^* \mu_s^N (\psi) $ is almost surely continuous from $\W_0^{2, p} \to \R $ and satisfies 
$$  \sup_N \sup_{t \le T } \E ( \| \eta_t^N ( f) D^* \mu_t^N  \|_{ - 2, p } ) < \infty .$$
\end{lemma}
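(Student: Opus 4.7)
The plan is to first establish continuity together with a pointwise bound on the $\W_0^{-2,p}$-norm, and then to use the good event $G_T^N$ of \eqref{eq:gtn} to circumvent the lack of second-moment control on $|\eta_t^N(f)|$. For each $\omega$, the functional $\psi \mapsto \eta_t^N(f)\, D^* \mu_t^N(\psi) = \eta_t^N(f)\cdot \frac{1}{N}\sum_{i=1}^N \psi'(X_t^{N,i})$ is a scalar multiple of $\psi \mapsto \mu_t^N(\psi') = \frac{1}{N}\sum_i \langle \delta_{X_t^{N,i}}, D\psi\rangle$. By Lemma \ref{prop:5}, $D: \W_0^{2,p} \to \W_0^{1,p}$ is continuous with $\|D\psi\|_{1,p} \le C\|\psi\|_{2,p}$, and by Lemma \ref{lem:6}, $\delta_x \in \W_0^{-1,p}$ with $\|\delta_x\|_{-1,p} \le C(1+|x|^p)$. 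Hence the functional is continuous on $\W_0^{2,p}$, and using the Sobolev embedding $|\psi'(x)| \le C\|\psi\|_{2,p}(1+|x|^p)$, we obtain the pointwise bound
$$\|\eta_t^N(f)\, D^* \mu_t^N\|_{-2,p} \le C\, |\eta_t^N(f)| \left(1 + \frac{1}{N}\sum_{i=1}^N |X_t^{N,i}|^p\right).$$

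The main obstacle, as flagged in Remark \ref{rem:2}, is that we only have first-moment control $\sup_N \sup_{t\le T} \E|\eta_t^N(f)| < \infty$ (cf.\ \eqref{eq:etatNf}, which follows from Proposition \ref{prop:8} applied with an auxiliary $p' > \alpha + 1/2$), so a direct Cauchy--Schwarz bound on the product above is not available. To bypass this, we split on the event $G_T^N$. On $G_T^N$, the deterministic bound $\sup_i \sup_{t\le T} |X_t^{N,i}| \le c_0 + 8hf(2h)T$ from \eqref{eq:aprioriboundgt} reduces the parenthesized factor to a constant $C_T$, yielding
$$\E(\|\eta_t^N(f)\, D^* \mu_t^N\|_{-2,p}\, ;\, G_T^N) \le C_T\, \E|\eta_t^N(f)| \le C_T.$$

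On the complement, we rely on the crude bounds $|\eta_t^N(f)| \le \sqrt{N}(\mu_t^N(f) + g_t(f))$ and on the fact that $\frac{1}{N}\sum_i |X_t^{N,i}|^p$ admits moments of arbitrary order bounded by polynomials in $N$ (via \eqref{eq:aprioribound1} and exchangeability). Combining H\"older's inequality with the exponential deviation estimate $\P((G_T^N)^c) \le ae^{-bNT}$ from \eqref{eq:aprioriboundgt}, the contribution on $(G_T^N)^c$ is at most $C_T\sqrt{N}\, e^{-bNT/2}$, which is uniformly bounded in $N$ (and in fact vanishes as $N \to \infty$). This mirrors the treatment of the bad set already carried out in Corollary \ref{cor:tvdistance} and in Step 2 of the proof of Proposition \ref{prop:8}, and yields the desired uniform bound.
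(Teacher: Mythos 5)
Your proof is correct and takes essentially the same approach as the paper: you establish the pointwise bound $\|\eta_t^N(f)\,D^*\mu_t^N\|_{-2,p} \le C\,|\eta_t^N(f)|\,(1 + N^{-1}\sum_i |X_t^{N,i}|^p)$ and then split on $G_T^N$, which is exactly what the paper's one-line "conclusion similar as in Proposition \ref{prop:8}" refers to. You also correctly flag the subtlety that only first-moment control on $|\eta_t^N(f)|$ (via an auxiliary exponent $p'>\alpha+\tfrac12$) is available, which is precisely why the good-set argument is needed rather than a direct Cauchy--Schwarz.
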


\begin{proof}
The result follows from 
$$ |\eta_t^N ( f) D^* \mu_t^N ( \psi) | \le | \eta_t^N ( f) | \frac1N \sum_{i=1}^N | \psi'  ( X_t^{N, i } ) | \le C \| \psi \|_{2,p} | \eta_t^N ( f) | \frac1N \sum_{i=1}^N ( 1 + | X_t^{N, i } |^p ) .$$
The conclusion is then similar as in the proof of Proposition \ref{prop:8}.

\end{proof} 

We now turn to the study of the last stochastic integral appearing in \eqref{eq:decomp}.

\begin{proposition}\label{prop:13}
For any $ p > 1/2 ,$ the process $ \int_0^t D^* \mu_{s-}^N d W_s^N (1) $ is a $ ({\mathcal F}_t)_{t \geq 0} -$martingale with paths in $ D ( \R_+, \W_0^{- 2, p } ) $ almost surely. Furthermore, 
\begin{equation}\label{eq:412}
 \sup_N \E \left( \sup_{t \le T } \| \int_0^t D^* \mu_{s-}^N d W_s^N (1)|^2_{ - 2, p } \right) < \infty .
\end{equation}
\end{proposition}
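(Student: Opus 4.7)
The proof will follow essentially the same pattern as Proposition \ref{prop:9}, replacing the point masses $\delta_{X_s^{N,i}}$ by $D^*\delta_{X_s^{N,i}}$ and working one order of regularity higher.

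First I would verify that for each fixed $\psi \in \W_0^{2,p}$, the integrand $s \mapsto \mu_{s-}^N(\psi') = \frac{1}{N}\sum_{i=1}^N \psi'(X_{s-}^{N,i})$ is predictable and locally bounded, so that the stochastic integral $\int_0^t \mu_{s-}^N(\psi') dW^N_s(1)$ against the purely discontinuous martingale $W^N(1)$ is a well-defined c\`adl\`ag real-valued martingale. Its predictable quadratic variation, computed from \eqref{eq:anglebracketwtn}, is
\begin{equation*}
\left\langle \int_0^\cdot \mu_{s-}^N(\psi') dW^N_s(1)\right\rangle_t = \int_0^t \mu_s^N(\psi')^2\, \mu_s^N(f)\, ds.
\end{equation*}

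Next I would fix an orthonormal basis $(\psi_k)_{k \geq 1}$ of $\W_0^{2,p}$ made of $C_c^\infty$ functions and write
\begin{equation*}
\left\| \int_0^t D^*\mu_{s-}^N\, dW^N_s(1)\right\|_{-2,p}^2 = \sum_k \left(\int_0^t \mu_{s-}^N(\psi_k')\, dW^N_s(1)\right)^2.
\end{equation*}
Taking the supremum in $t$, Doob's $L^2$ inequality and monotone convergence yield
\begin{equation*}
\E\Bigl[\sup_{t \le T} \bigl\| \int_0^t D^*\mu_{s-}^N\, dW^N_s(1)\bigr\|_{-2,p}^2\Bigr] \le 4 \E\int_0^T \mu_s^N(f) \sum_k \mu_s^N(\psi_k')^2\, ds.
\end{equation*}
Since $\mu_s^N(\psi_k') = \langle D^*\mu_s^N, \psi_k\rangle$, Parseval's identity in $\W_0^{2,p}$ gives $\sum_k \mu_s^N(\psi_k')^2 = \|D^*\mu_s^N\|_{-2,p}^2$, and the triangle inequality together with Lemma \ref{lem:6} produces
\begin{equation*}
\|D^*\mu_s^N\|_{-2,p} \le \frac{1}{N}\sum_{i=1}^N \|D^*\delta_{X^{N,i}_s}\|_{-2,p} \le \frac{C}{N}\sum_{i=1}^N (1 + |X_s^{N,i}|^p).
\end{equation*}
Combined with $\mu_s^N(f) \le C(1 + \frac{1}{N}\sum_j |X_s^{N,j}|^\alpha)$, Cauchy-Schwarz in the inner sum and the a priori moment bound \eqref{eq:aprioribound1} applied at order $2p + \alpha$ deliver \eqref{eq:412} uniformly in $N$.

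Finally, for the c\`adl\`ag path property in $\W_0^{-2,p}$, I would argue exactly as for Proposition \ref{prop:9}: the real martingales $\int_0^\cdot \mu_{s-}^N(\psi_k') dW^N_s(1)$ are c\`adl\`ag for every $k$, the random series defining the dual norm converges uniformly in $t$ on $[0,T]$ thanks to the $L^2$ bound just obtained, and the limit therefore inherits c\`adl\`ag sample paths, following the argument of Proposition 4.7(ii) of \cite{chevallier}. The main technical point I expect is verifying that the summation over $k$ in the dual norm commutes with the supremum in $t$ in the strong sense needed to conclude measurability and c\`adl\`ag regularity in $\W_0^{-2,p}$; this is handled by the uniform $L^2$ control of the tails of the series, which is precisely what \eqref{eq:412} provides once established.
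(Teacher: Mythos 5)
Your proof is correct and follows essentially the same strategy as the paper: orthonormal basis $(\psi_k)$ of $\W_0^{2,p}$, Doob's inequality and monotone convergence, then a per-particle reduction via Lemma \ref{lem:6} and the a priori moment bound \eqref{eq:aprioribound1}. The only minor variation is that you use Parseval's identity for $D^*\mu_s^N$ and then the triangle inequality, whereas the paper applies Jensen's inequality $(\mu_s^N(\psi_k'))^2\le\mu_s^N((\psi_k')^2)$ before summing over $k$ and identifying $\sum_k(\psi_k')^2(x)=\|\delta_x\circ D\|_{-2,p}^2$; both routes yield equivalent moment bounds controlled by \eqref{eq:aprioribound1}.
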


\begin{proof}
The proof is similar to the proof of Proposition \ref{prop:8}. As there, we take an orthonormal basis $( \psi_k)_{ k \geq 1} ,$ now of $ \W_0^{2, p}, $ composed of $ C^\infty_c -$functions.  We have
$$\sup_{ t \le T }  \| \int_0^t D^* \mu_{s-}^N d W_s^N (1) \|^2_{-2, p } = \sup_{t \le T }  \sum_{ k \geq 1} (\int_0^t \mu_s^N ( \psi'_k) d W_s^N ( 1) )^2 \le  \sum_{ k \geq 1} \sup_{t \le T } (\int_0^t \mu_s^N ( \psi'_k) d W_s^N ( 1) )^2.$$
Applying first  Doob's and then Jensen's inequality and finally monotone convergence, 
\begin{multline*}
 \E ( \sup_{ t \le T } \| \int_0^t D^* \mu_{s-}^N d W_s^N (1) \|^2_{-2, p }) \le 4   \sum_k \E \int_0^T\mu_s^N (f) ( \mu_s^N ( \psi_k'))^2 ds\le  4  \sum_k \E \int_0^T \mu_s^N (f)\mu_s^N ( (\psi_k')^2)) ds \\
 = 4  \E \int_0^T \mu_s^N (f)\left[ \frac1N \sum_{i=1}^N  \sum_k  (\psi_k ')^2 ( X_s^{N, i } )\right]  ds .
\end{multline*} 
Now we rely on Lemma \ref{lem:6} and use that 
$$ \| \delta_x \circ D\|^2_{- 2, p} = \sum_k ( \delta_x \circ D ( \psi_k) )^2 = \sum_k ( \psi_k' ( x) )^2 $$
to identify 
$$ \sum_k  (\psi_k ')^2 ( X_s^{N, i } ) = \| \delta_{X_s^{N, i } } \circ D \|^2_{- 2, p } \le C (1 + | X_s^{N, i } |^{2p} )  $$
such that 
$$  \E ( \sup_{ t \le T } \| \int_0^t D^* \mu_{s-}^N d W_s^N (1) \|^2_{-2, p }) \le C \E \int_0^T \mu_s^N (f) ( 1 + \mu_s^N ( |\cdot|^{2p} ) ) ds ,$$
which, 
together with our a priori estimate \eqref{eq:aprioribound1}, using similar arguments as in the end of the proof of Proposition \ref{prop:9}, allows to conclude that  
$$ \E ( \sup_{ t \le T } \| \int_0^t D^* \mu_{s-}^N d W_s^N (1) \|^2_{-2, p })   \le C_T.$$
The remainder of the assertion follows once more along the lines of  the proof of item (ii) of Proposition 4.7 in \cite{chevallier} 
\end{proof}

To close this section, we state the following
\begin{lemma}\label{lem:17}
Under Assumptions \ref{ass:1} and \ref{ass:2}, for any $ p >  \frac12, $ the integrals 
$$ \int_0^t L_s^* \eta_s^N ds \mbox{   and   }  \int_0^t \eta_s^N ( f) D^* \mu_s^N ds $$
(where $L_s^* $ and $ D^* $ denote the dual operators of $L_s $ and of $D$) 
are almost surely well defined as Bochner integrals in $ \W_0^{ - 3, p }.$ Furthermore,  $ t \mapsto \int_0^t L_s^* \eta_s^N ds $ and $ t \mapsto \int_0^t \eta_s^N ( f) D^* \mu_s^N ds $ are almost surely strongly continuous in $ \W_0^{- 3, p }.$
\end{lemma}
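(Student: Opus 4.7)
The plan is to verify, for each of the two integrands separately, the standard criteria for Bochner integrability in the separable Hilbert space $\W_0^{-3,p}$: strong measurability of $s \mapsto F(s)$, together with $\int_0^T \|F(s)\|_{-3,p}\, ds < \infty$ almost surely. Once Bochner integrability is established, the almost-sure continuity of $t \mapsto \int_0^t F(s)\, ds$ in $\W_0^{-3,p}$ follows from the standard estimate
\begin{equation*}
\Bigl\| \int_t^{t+h} F(s)\, ds \Bigr\|_{-3,p} \le \int_t^{t+h} \|F(s)\|_{-3,p}\, ds \longrightarrow 0 \quad \text{as } h \to 0,
\end{equation*}
by dominated convergence.

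For $F(s) = L_s^* \eta_s^N$, the key is Lemma \ref{prop:5}, which provides $\|L_s \psi\|_{2, p+\alpha} \le C\|\psi\|_{3, p}$ uniformly in $s \le T$. By duality, $L_s^*$ extends to a bounded operator $\W_0^{-2, p+\alpha} \to \W_0^{-3, p}$ with $\|L_s^* \eta_s^N\|_{-3, p} \le C \|\eta_s^N\|_{-2, p+\alpha}$. Since $p + \alpha > 1/2$ (as $\alpha \geq 1$), Proposition \ref{prop:8} applied with parameter $p+\alpha$ gives $\sup_{s \le T}\sup_N \E\|\eta_s^N\|_{-2, p+\alpha} < \infty$, from which the almost-sure integrability of the norm follows by Fubini. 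Strong measurability reduces to weak measurability by Pettis' theorem (since $\W_0^{-3,p}$ is separable as the dual of a separable Hilbert space); for every fixed $\psi \in \W_0^{3,p}$, the real-valued process $s \mapsto \langle L_s^* \eta_s^N, \psi\rangle = \langle \eta_s^N, L_s \psi\rangle$ is measurable because $\mu_s^N$ is c\`adl\`ag and $s \mapsto L_s \psi$ is measurable through the continuous deterministic map $s \mapsto g_s(f)$.

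For $F(s) = \eta_s^N(f) D^* \mu_s^N$, Lemma \ref{lemma:12} already yields $\sup_{s \le T}\sup_N \E\|F(s)\|_{-2, p} < \infty$, and the continuous embedding $\W_0^{-2, p} \hookrightarrow \W_0^{-3, p}$ (dual to the inclusion $\W_0^{3,p} \subset \W_0^{2,p}$) gives $\|F(s)\|_{-3, p} \le C \|F(s)\|_{-2, p}$, so almost-sure integrability of the norm is immediate. Measurability is checked as above by testing against $\psi \in \W_0^{3,p}$, using that $\eta_s^N(f)$ and $\mu_s^N(\psi')$ are c\`adl\`ag real processes. No serious obstacle arises; the only real subtlety is the bookkeeping of Sobolev indices — the loss of one derivative under $L_s$ forces the choice $k=3$ on the dual side, while the extra weight $\alpha$ appearing in $\|L_s \psi\|_{2, p+\alpha}$ is absorbed by applying Proposition \ref{prop:8} with the shifted parameter $p + \alpha$, which is permitted since the latter holds for any $q > 1/2$.
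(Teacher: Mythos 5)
Your proof is correct and spells out precisely the standard Bochner-integrability argument (strong measurability via Pettis plus a.s.\ integrability of the norm, then continuity of the indefinite integral) that the paper outsources to the beginning of the proof of Proposition~3.5 in Fernandez--M\'el\'eard. The Sobolev bookkeeping — using Lemma~\ref{prop:5} to bound $\|L_s^*\eta_s^N\|_{-3,p}$ by $\|\eta_s^N\|_{-2,p+\alpha}$ and then invoking Proposition~\ref{prop:8} with the shifted weight $p+\alpha$, and the embedding $\W_0^{-2,p}\hookrightarrow\W_0^{-3,p}$ together with Lemma~\ref{lemma:12} for the second integrand — is exactly right and matches the intended route.
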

The proof of the above lemma is sketched in the beginning of the proof of Proposition 3.5 in \cite{fernandez}. 

Resuming what we have done so far, we conclude that

\begin{proposition}\label{prop:14}
Grant Assumptions \ref{ass:1} and \ref{ass:2}. Then for any $ p >\alpha + \frac12 ,$ 
we have the decomposition in $\W_0^{ - 3, p } $ 
\begin{equation}\label{eq:12}
\eta_t^N = \eta_0^N + \int_0^t L_s^* \eta_s^N ds + R^* W_t^N + h \int_0^t D^* \mu_{s-}^N  d W_s^N(1) + h \int_0^t  \eta_s^N ( f) D^* \mu_s^N   ds +R_t^N ,
\end{equation}
where $ R^* $ denotes the dual operator of $R : \psi \mapsto \psi ( 0 ) - \psi( \cdot) $ and where $R_t^N$ is given in \eqref{eq:RtN}.

Moreover, 
\begin{equation}\label{eq:uniforminTbound}
 \sup_N \E ( \sup_{ t \le T } \| \eta_t^N\|_{- 3, p } ) < \infty 
\end{equation} 
and $ t \mapsto \eta_t^N$ belongs to $ D( \R_+, \W_0^{-3, p }) $ almost surely. In particular,
\begin{equation}\label{eq:uniforminTboundeta}
\sup_N \E ( \sup_{ t \le T } |\eta_t^N (f) | ) < \infty .
\end{equation}
\end{proposition}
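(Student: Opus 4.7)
The plan is to assemble \eqref{eq:12} by testing Proposition \ref{prop:first} against arbitrary $\varphi \in \W_0^{3,p}$ and then collecting the individual norm estimates from Propositions \ref{prop:8}--\ref{prop:13} and Lemmas \ref{lemma:12}--\ref{lem:17}. First I fix $\varphi \in C^\infty_c$, which is dense in $\W_0^{3,p}$ and contained in $C^2_b$, so that \eqref{eq:decomp} holds pointwise. The right-hand side of \eqref{eq:decomp} is linear in $\varphi$, and I will argue that each of its six terms defines a continuous linear functional on $\W_0^{3,p}$: the initial datum $\eta_0^N(\varphi)$ extends by continuity of point evaluation (Lemma \ref{lem:6}); the martingale contribution $R^* W_t^N(\varphi) = W_t^N(R\varphi)$ extends by continuity of $R$ on $\W_0^{1,p}$ (Lemma \ref{prop:5}) combined with Proposition \ref{prop:9}; the Bochner integrals $\int_0^t L_s^*\eta_s^N\,ds$ and $\int_0^t \eta_s^N(f)\,D^*\mu_s^N\,ds$ are well-defined in $\W_0^{-3,p}$ by Lemma \ref{lem:17}; the stochastic integral is càdlàg in $\W_0^{-2,p}$ by Proposition \ref{prop:13}; and $R_t^N$ belongs to $\W_0^{-3,p}$ by Proposition \ref{prop:10}. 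Density of $C^\infty_c$ in $\W_0^{3,p}$ then lifts \eqref{eq:decomp} to the identity \eqref{eq:12} interpreted in $\W_0^{-3,p}$.

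Next I will verify the path regularity: each term on the right-hand side of \eqref{eq:12} has càdlàg paths in $\W_0^{-3,p}$, since the two Bochner integrals are strongly continuous (Lemma \ref{lem:17}), the two martingale terms are càdlàg in $\W_0^{-1,p}$ and $\W_0^{-2,p}$ respectively (Propositions \ref{prop:9}, \ref{prop:13}), both of which embed continuously into $\W_0^{-3,p}$, and $R_t^N$ is a pure jump process whose jumps occur at the points of the $\pi^i$. This yields $\eta^N \in D(\R_+, \W_0^{-3,p})$ almost surely.

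To obtain \eqref{eq:uniforminTbound}, I will bound each of the six contributions in turn. The initial datum is controlled by $\E\|\eta_0^N\|_{-3,p} \le \E\|\eta_0^N\|_{-2,p} < \infty$ via Proposition \ref{prop:8} at $t=0$. The two martingale terms are handled by Cauchy--Schwarz together with Propositions \ref{prop:9}, \ref{prop:13} and Lemma \ref{prop:5}, giving $\E\sup_{t\le T}\|R^* W_t^N\|_{-3,p} + \E\sup_{t\le T}\|\int_0^t D^*\mu_{s-}^N\,dW_s^N(1)\|_{-3,p} < \infty$, and $R_t^N$ satisfies $\E\sup_{t\le T}\|R_t^N\|_{-3,p} \le C/\sqrt N$ by Proposition \ref{prop:10}. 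For the two Bochner-integral terms, Lemma \ref{prop:5} yields $\|L_s^*\eta_s^N\|_{-3,p} \le C\,\|\eta_s^N\|_{-2,p+\alpha}$, and applying Proposition \ref{prop:8} with the shifted weight $p+\alpha > \frac12$ bounds $\E\int_0^T\|\eta_s^N\|_{-2,p+\alpha}\,ds$ by $CT$; Lemma \ref{lemma:12} gives the analogous control for $\int_0^t \eta_s^N(f)\,D^*\mu_s^N\,ds$. Summing the six contributions delivers \eqref{eq:uniforminTbound}, and \eqref{eq:uniforminTboundeta} is then immediate from the dual-norm inequality $|\eta_t^N(f)| \le \|\eta_t^N\|_{-3,p}\,\|f\|_{3,p}$, valid because $f \in C^{3,\alpha} \hookrightarrow \W_0^{3,p}$ for $p > \alpha + \frac12$ under Assumption \ref{ass:1}. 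The assembly is essentially routine; the only point requiring real care is the bookkeeping of Sobolev weights, in particular the shift from $p$ to $p+\alpha$ incurred each time the operator $L_s$ or a factor $f$ is pushed through $\eta_s^N$, which is exactly what forces working in $\W_0^{-3,p}$ rather than $\W_0^{-2,p}$.
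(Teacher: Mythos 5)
Your proof is correct and follows essentially the same route as the paper: assemble \eqref{eq:12} from Proposition \ref{prop:first} by density, then control each of the six terms via Propositions \ref{prop:8}--\ref{prop:13} and Lemmas \ref{lemma:12}--\ref{lem:17}, using $\|L_s^*\eta_s^N\|_{-3,p}\le C\|\eta_s^N\|_{-2,p+\alpha}$ together with Proposition \ref{prop:8} at the shifted weight. One small imprecision in your closing remark: what forces the passage from $\W_0^{-2,p}$ to $\W_0^{-3,p}$ is chiefly the loss of one order of differentiability when $L_s^*$ (and $D^*$) acts, rather than the weight shift $p\mapsto p+\alpha$ per se, since Proposition \ref{prop:8} already holds for every $p>\tfrac12$.
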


\begin{remark}
The above decomposition is stated in $\W_0^{ - 3, p } $ for any $ p >\alpha + \frac12 .$ This lower bound comes from the fact that we have to apply $ \eta^N_t$ to the jump rate function $f$  which belongs to $ C^{ 6 , \alpha } \subset \W_0^{4, p } $ under the condition $ p > \alpha + \frac12 .$ 
\end{remark}

\begin{proof}
Decomposition \eqref{eq:12} follows from our previous results Proposition \ref{prop:first}--\ref{prop:13}. It implies that 
\begin{multline*}
 \sup_{ t \le T } \| \eta_t^N\|_{-3, p } \le \| \eta_0^N\|_{ - 3, p} + \int_0^T \| L_s^* \eta_s^N\|_{-3, p } ds + | h | \int_0^T \|\eta_s^N ( f)  \D^* \mu_s^N\|_{-3, p } ds + \sup_{ t \le T } \| R^* W_t^N \|_{- 3, p } \\
 + | h | \sup_{ t \le T } \| \int_0^t D^* \mu_{s-}^N d W_s^N (1) \|_{-3, p} + \sup_{ t \le T } \| R_t^N\|_{-3, p }.
\end{multline*}
We know by Lemma \ref{prop:5} that 
$$ \E \int_0^T \| L_s^* \eta_s^N\|_{-3, p } ds \le C_T \sup_{s \le T} \E \| \eta_s^N \|_{- 2, p+\alpha} $$
which is finite by Proposition \ref{prop:8}. Moreover, by Lemma \ref{lemma:12}, 
$$ \E \int_0^T \|\eta_s^N ( f)  \D^* \mu_s^N\|_{-3, p } ds < \infty.$$ 
By continuity of the application $ R, $ the stochastic integral terms have already been treated in Propositions \ref{prop:9} and \ref{prop:13} and the remainder term in Proposition \ref{prop:10} such that the conclusion follows. The proof of the fact that almost surely $ t \mapsto \eta_t^N$ belongs to $ D( \R_+, \W_0^{-3, p }) $ is analogous to the proof of Proposition 4.10 in \cite{chevallier}. Finally we use that $ | \eta_t^N (f) | \le \| \eta_t^N\|_{-3, p} \|f\|_{3, p} $ to deduce \eqref{eq:uniforminTboundeta}. 
\end{proof}

\section{Tightness}\label{sec:4}
This section is devoted to the proof of the tightness of the laws of $ \eta^N $ interpreted as stochastic processes with c\`adl\`ag paths taking values in $ \W_0^{-4 ,p},$ for some $ p > \alpha + \frac12.$ Although the above decomposition \eqref{eq:12} is stated in $\W_0^{-3, p}, $ we shall see in Remark \ref{rem:4} below why we have to add one degree of regularity and consider the process as process taking values in the bigger space $ \W_0^{-4 ,p}.$

As it is classically done, 
we rely on the tightness criterion of Aldous for Hilbert space valued stochastic processes that we quote from \cite{joffe}. This criterion reads as follows. A sequence $ (X^N)_{N \geq 1 }  $  of processes in $ D ( \R_+, \W_0^{-4, p }) ,$ defined on a filtered probability space $ ( \Omega, ({\mathcal F}_t)_{t \geq 0 }, \P ),$ is tight if 

\begin{enumerate}
\item
For every $t \geq 0 $ and every $ \varepsilon > 0 $ there exists a  Hilbert space $ H_0 $ such that the embedding $ H_0 \hookrightarrow \W_0^{-4, p} $ is Hilbert-Schmidt and such that for all $t \geq 0,$ 
\begin{equation}\label{eq:tightness1bis}
\sup_N \E ( \| X_t^N\|_{H_0} ) < \infty .
\end{equation}\item
For all $ \varepsilon_1, \varepsilon_2 > 0 $ and $ T \geq 0 $ there exist $ \delta^* > 0 $ and $ N_0$ such that for all $({\mathcal F}_t)_{t \geq 0 }-$stopping times $ \tau_N \le T , $
\begin{equation}\label{eq:tightness2}
\sup_{ N \geq N_0} \sup_{\delta \le \delta^*} \P ( \| X^N_{ \tau_N + \delta } - X^N_{\tau_N} \|_{ - 4, p } \geq \varepsilon_1 ) \le \varepsilon_2.
\end{equation}
\end{enumerate}

\begin{theorem}\label{theo:tight}
Grant Assumptions \ref{ass:1} and \ref{ass:2}.
Then the sequences of laws of $ \eta^N, $ of  $ W^N $ and of $  \int_0^{\cdot} D^* \mu_{s-}^N  d W_s^N(1)$ are tight in $ D ( \R_+, \W_0^{-4, p }), $ for any $ p > \alpha + \frac12 .$ 
\end{theorem}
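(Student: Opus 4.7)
The plan is to verify, for each of the three processes, the two conditions of the Aldous--Joffe--M\'etivier criterion recalled just before the statement.

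Condition (i) is immediate once one picks $H_0 = \W_0^{-3,p}$ for $\eta^N$, $H_0 = \W_0^{-1,p}$ for $W^N$, and $H_0 = \W_0^{-2,p}$ for $\int_0^\cdot D^*\mu_{s-}^N\,dW_s^N(1)$: the required uniform-in-$N$ moment bounds are then exactly Propositions \ref{prop:14}, \ref{prop:9} and \ref{prop:13}, respectively. The three embeddings into $\W_0^{-4,p}$ are Hilbert--Schmidt by the standard duality with the Hilbert--Schmidt inclusion $\W_0^{k+1,p}\hookrightarrow\W_0^{k,p}$ on $\R_+$ recorded in the Sobolev appendix. This loss of one derivative in regularity is precisely why the tightness space must be $\W_0^{-4,p}$ and not $\W_0^{-3,p}$.

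For the Aldous condition (ii) applied to $\eta^N$, I would use the decomposition \eqref{eq:12} and estimate each of the five pieces of $\eta^N_{\tau_N+\delta}-\eta^N_{\tau_N}$ separately. The drift increments $\int_{\tau_N}^{\tau_N+\delta}L_s^*\eta_s^N\,ds$ and $h\int_{\tau_N}^{\tau_N+\delta}\eta_s^N(f)D^*\mu_s^N\,ds$ admit $\W_0^{-4,p}$-norm bounds by $\int_{\tau_N}^{\tau_N+\delta}F_s^N\,ds$ with $\sup_N\sup_{s\le T}\E F_s^N<\infty$, thanks to Lemmas \ref{prop:5} and \ref{lemma:12} combined with Proposition \ref{prop:8}; Markov's inequality then gives an $O(\delta)$ bound on the relevant probability. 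The remainder $R^N_{\tau_N+\delta}-R^N_{\tau_N}$ is $O(N^{-1/2})$ in $\W_0^{-3,p}$ by Proposition \ref{prop:10}, hence absorbed for $N\ge N_0$ large. The two purely discontinuous martingale pieces $R^*(W^N_{\tau_N+\delta}-W^N_{\tau_N})$ and $h\int_{\tau_N}^{\tau_N+\delta}D^*\mu_{s-}^N\,dW_s^N(1)$ I would handle by optional stopping and Doob's $L^2$ inequality applied coordinate-wise in an orthonormal basis of $\W_0^{4,p}$: localized at $\tau_N$, the same computations as in Propositions \ref{prop:9} and \ref{prop:13} produce $\E\|\text{increment}\|_{-4,p}^2\le C\delta$, and (ii) follows again by Markov. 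Tightness of $W^N$ and of the stochastic-integral process individually reduces to this same martingale estimate applied in isolation.

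The main obstacle is therefore the martingale step: one must bound the $\W_0^{-4,p}$-norm of a vector-valued stochastic integral rather than merely evaluate it at a single test function. The key technical tool is Parseval's identity $\sum_k(\psi_k'(x))^2=\|D^*\delta_x\|_{-4,p}^2\le C(1+|x|^{2p})$ obtained from Lemma \ref{lem:6}, which lets one interchange the basis-sum with the time integral by monotone convergence and then close the bound using the polynomial moment estimates of Lemma \ref{lemma:apriori}. The one-derivative loss in passing from $\W_0^{-3,p}$ to $\W_0^{-4,p}$ is exactly what is needed for this Parseval step to produce a uniformly polynomially-controlled quantity and hence the desired $O(\delta)$ bound.
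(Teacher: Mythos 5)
Your overall skeleton matches the paper's: verify the two conditions of the Aldous--Joffe--M\'etivier criterion, handle the martingale parts via trace/angle-bracket estimates (Rebolledo or Doob, equivalent here), and peel off the drift and remainder terms of the decomposition \eqref{eq:12} for $\eta^N$. However, there are two problems, one minor and one genuine.

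The minor one is in your Step 1: the embedding $\W_0^{k+1,p}\hookrightarrow\W_0^{k,p}$ on $\R_+$ is \emph{not} Hilbert--Schmidt (a translating bump sequence shows it is not even compact). Maurin's theorem as stated in the appendix requires a simultaneous gain in weight, $\W_0^{m+k,p}\hookrightarrow\W_0^{k,p+p'}$ with $p'>\tfrac12$. The paper therefore takes $H_0=\W_0^{-2,p+1}$ (not $\W_0^{-3,p}$) for $\eta^N$, $H_0=\W_0^{-1,p+1}$ for $W^N$, etc. This is trivially fixable in your argument by choosing the intermediate space with the shifted weight exponent.

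The genuine gap is in your treatment of the drift increments. You claim that $\bigl\|\int_{\tau_N}^{\tau_N+\delta}L_s^*\eta_s^N\,ds\bigr\|_{-4,p}\le\int_{\tau_N}^{\tau_N+\delta}F_s^N\,ds$ together with $\sup_N\sup_{s\le T}\E F_s^N<\infty$ and Markov's inequality yields an $O(\delta)$ probability bound. This is false: since $\tau_N$ is a random stopping time, $\E\int_{\tau_N}^{\tau_N+\delta}F_s^N\,ds$ cannot be bounded by $\delta\sup_s\E F_s^N$; Fubini only gives $\int_0^{T+\delta}\E F_s^N\,ds=O(T)$. To recover $O(\delta)$ one must bound $\int_{\tau_N}^{\tau_N+\delta}F_s^N\,ds\le\delta\sup_{s\le T+\delta^*}F_s^N$ and then control $\E\sup_{s\le T+\delta^*}F_s^N$ — a supremum-over-time moment bound, which is strictly stronger than the pointwise-in-time bound you invoke. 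This is precisely what Proposition \ref{prop:14} (equation \eqref{eq:uniforminTbound}, $\sup_N\E\sup_{t\le T}\|\eta_t^N\|_{-3,p}<\infty$) provides, and it is the key point the paper singles out in Remark \ref{rem:4}: because second moments of $\|\eta_t^N\|_{-k,p}$ are out of reach (no Cauchy--Schwarz trick as in \cite{fernandez}, \cite{chevallier}), the only way to get an $O(\delta)$ bound for the drift increments past the random stopping time is this sup-over-time first-moment bound, and it is exactly what forces the loss of one extra derivative (hence tightness in $\W_0^{-4,p}$ rather than $\W_0^{-3,p}$). Your citation of Proposition \ref{prop:8} in its place and your assertion that $\sup_s\E F_s^N<\infty$ suffices indicate you have not spotted this obstruction.
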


\begin{proof}
{\bf Step 1.} We start studying Condition \eqref{eq:tightness1bis}. It is satisfied with $ H_0 = \W_0^{- 2, p+1} $ for $ \eta^N$ as a consequence of Proposition \ref{prop:8} since the embedding $ \W_0^{- 2, p+1} 
 \hookrightarrow \W_0^{-4, p} $ is of Hilbert-Schmidt type, by Maurin's theorem, see Section \ref{sec:sobolev} in the Appendix. For $ W^N$ it even holds with $ H_0 = \W_0^{-1, p+1}, $ by Proposition \ref{prop:9}, and for $ \int_0^{\cdot} D^* \mu_{s-}^N  d W_s^N(1),$ it follows from Proposition \ref{prop:13}, with 
 $H_0= \W_0^{-2, p+1 }.$ 
 
{\bf Step 2.} We now check Condition \eqref{eq:tightness2} for $ W^N.$ By Rebolledo's theorem (see \cite{joffe}, page 40), it is sufficient to show that it holds for the trace of the processes $ <\!\!\!< W^N>\!\!\!> $ where each $ <\!\!\!< W^N>\!\!\!> $ is the linear continuous mapping from $ \W_0^{ 4, p } $ to $ \W_0^{-4, p } $ given for all $ \psi_1, \psi_2 \in \W_0^{4, p}$ by 
$$ < <\!\!\!< W^N>\!\!\!>_t (\psi_1) , \psi_2 > =  \int_0^t \mu_s^N ( f \psi_1 \psi_2) ds .$$
We take an orthonormal basis $ (\psi_k)_k $ of $ \W_0^{4, p}. $ Then
\begin{eqnarray*}
&&| Tr <\!\!\!< W^N>\!\!\!>_{ \tau_N + \delta} - Tr <\!\!\!< W^N>\!\!\!>_{\tau_N} |  \\
&& \quad  \quad = | \sum_k < <\!\!\!< W^N>\!\!\!>_{\tau_N + \delta } ( \psi_k), \psi_k > - <\!\!\!< W^N>\!\!\!>_{\tau_N} ( \psi_k ), \psi_k> | \\
&& \quad  \quad =  \sum_k \int_{ \tau_N}^{\tau_N + \delta} \mu_s^N ( f \psi_k^2 ) ds  =  \int_{ \tau_N}^{\tau_N + \delta}  \sum_k \mu_s^N (f  \psi_k^2 ) ds  \\
&& \quad  \quad =  \int_{ \tau_N}^{\tau_N + \delta} \frac1N \sum_{ i = 1}^N f( X_s^{N, i } ) \sum_k \psi_k^2 ( X_s^{N, i } ) ds =  \int_{ \tau_N}^{\tau_N + \delta} \frac1N \sum_{ i = 1}^N f( X_s^{N, i } ) \| \delta_{X^{N, i }_s} \|^2_{-4, p } ds .
\end{eqnarray*} 
By Lemma \ref{lem:6}, there exists a constant $C$ with $ \| \delta_{X^{N, i }_s} \|^2_{-4, p }  \le C ( 1 +  |X^{N, i }_s|^{2p}) $ such that we may upper bound the above expression by 
$$ C  \delta \;   \frac{1}{N} \sum_{i=1}^N \sup_{ s \le T + \delta}   ( 1 +  |X^{N, i }_s|^{2p}) (1 +|X^{N, i }_s|^\alpha) $$
having expectation which is upper bounded  uniformly in $N$ by 
$ C_T \delta^* $
thanks to our a priori estimates \eqref{eq:aprioribound1}. This implies \eqref{eq:tightness2} for $W^N.$ 

We now turn to the study of Condition \eqref{eq:tightness2} for the martingale $ M^N :=  \int_0^{\cdot} D^* \mu_{s-}^N  d W_s^N(1).$ We have
$$ < <\!\!\!<M^N>\!\!\!>_t (\psi_1) , \psi_2 > =  \int_0^t \mu_s^N ( f ) \mu_s^N (\psi_1 ') \mu_s^N( \psi_2 ') ds $$
such that, using Jensen's inequality, 
 \begin{multline*}
| Tr <\!\!\!< M^N >\!\!\!>_{ \tau_N + \delta} - Tr <\!\!\!< M^N >\!\!\!>_{\tau_N} |  \\
= | \sum_k < <\!\!\!< M^N >\!\!\!>_{\tau_N + \delta } ( \psi_k), \psi_k > - <\!\!\!< M^N >\!\!\!>_{\tau_N} ( \psi_k ), \psi_k> | \\
=  \sum_k \int_{ \tau_N}^{\tau_N + \delta} \mu_s^N ( f ) (\mu_s^N(\psi_k '))^2  ds  \le   \int_{ \tau_N}^{\tau_N + \delta}  \mu_s^N ( f ) \sum_k \mu_s^N ( (\psi_k') ^2 ) ds ,
\end{multline*} 
and the conclusion follows similarly. 

Finally, using decomposition \eqref{eq:12} and the fact that the sequence of laws of $ R^*W^N$ (by continuity of $R$) and of $ M^N$ have already been shown to be tight, to show the tightness of $ \eta^N, $ it suffices to check condition  \eqref{eq:tightness2} for the remaining terms 
$${\cal R}_t^N=  \eta_0^N + \int_0^t L_s^* \eta^N_s ds + h \int_0^t \eta_s^N ( f) D^* \mu_s^N ds + R^N_t.$$
We have 
$$
\| {\cal R}^N_{\tau_N + \delta} - {\cal R}^N_{\tau_N} \|_{-4, p } \le \delta \sup_{ s \le T + \delta^* } \left( \| L_s^* \eta_s^N \|_{-4, p } + |h| \; \| \eta_s^N ( f) D^* \mu_s^N\|_{-4, p } \right)
 + \frac{1}{\sqrt{N}} \sup_{ t \le T + \delta^*} \| \sqrt{N} R_t^N\|_{-4, p}.
$$
The last term above is controlled thanks to Proposition \ref{prop:10} above, choosing $ N \geq N_0 $ for $ N_0$ sufficiently large. Using the same arguments as in the proof of Proposition \ref{prop:8}, we have
\begin{multline*}
\sup_{ s \le T + \delta^*} \| \eta^N_s ( f) D^* \mu_s^N \|_{-4, p } \le C_{T+ \delta*}  \sup_{ t \le T+ \delta^* } | \eta_t^N ( f) | 
\\
+ C_T \sqrt{N} \left(  \frac{1}{N} \sum_{ i=1}^N ( 1 + \sup_{ t \le T+\delta^* } | X_t^{N, i }|^p) \right) \left(  \frac{1}{N} \sum_{ i=1}^N(1 + \sup_{t \le T + \delta^* } |X_t^{N, i }|^\alpha )\right) \indiq_{(G_T^N)^c} .
\end{multline*}

Recalling that by \eqref{eq:uniforminTboundeta},  
$ \sup_N \E \sup_{t \le T + \delta^* }  | \eta_t^N ( f) | < \infty ,$
we deduce that 
$$\sup_N  \E \sup_{ s \le T + \delta^*} \| \eta_s ( f) D^* \mu_s^N \|_{-4, p } \le C_{T+ \delta*}.$$

We conclude the proof recalling that by Lemma \ref{prop:5}, 
\begin{equation}\label{eq:tricky}
  \| L_s^* \eta_s^N\|_{-4, p }  \le C_{T+ \delta^*} \sup_{s \le T + \delta^*} \| \eta_s^N \|_{- 3, p+\alpha} ,
\end{equation}  
which, together with \eqref{eq:uniforminTbound} implies the assertion.
\end{proof}

\begin{remark}\label{rem:4}
The above proof relies on the decomposition \eqref{eq:12} and on the uniform in time upper bound \eqref{eq:uniforminTbound} which have been stated in $ \W_0^{-3, p} .$ However, the presence of  the integral $ \int_0^t L_s^* \eta_s^N ds ,$ canceling one order of derivative  and the fact that \eqref{eq:uniforminTbound} does only hold in $ \W_0^{-3, p } $ imply that we have to work in $ \W^{- 4, p }_0$ to be able to obtain the tightness of all terms. 
This is a crucial difference with respect to \cite{fernandez} and  \cite{chevallier}. They both use the upper bound 
$$ \E \int_{\tau_N}^{\tau_N+ \delta}  \| L_s^* \eta^N_s\|_{ - k ,  p }  ds \le \delta \E \int_{\tau_N}^{\tau_N +\delta}  \| L_s^* \eta^N_s\|^2_{ - k ,  p }  ds$$
and are hence able to use a non-uniform in time bound on the expectation of the square of the operator norm of $ \eta^N_t.$ Since we are not able to control the square of the operator norm within our framework, see  Remark \ref{rem:2} above, the prize to pay is to impose one degree of regularity more, as we did here.
\end{remark}

\begin{proposition}
Under Assumptions \ref{ass:1} and \ref{ass:2}, for any $ p > 1/2,$ the limit laws of $ \eta^N,$ of $ W^N$ and of $ \int_0^\cdot D^*  \mu^N_{s-} d W^N_s ( 1) $ are supported in $ C ( \R_+, \W_0^{-4, p }).$ 
\end{proposition}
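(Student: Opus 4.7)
The plan is to combine the tightness established in Theorem \ref{theo:tight} with the classical criterion asserting that if a tight sequence $(X^N)$ in $D(\R_+, H)$ satisfies $\sup_{t \le T}\|\Delta X^N_t\|_H \to 0$ in probability for every $T > 0$, then every weak limit is supported in $C(\R_+, H)$. I would therefore reduce the proof to verifying this vanishing-maximal-jump condition in $\W_0^{-4,p}$ for each of the three processes $\eta^N$, $W^N$ and $M^N := \int_0^\cdot D^*\mu^N_{s-}\,dW^N_s(1)$. All three jump only at spike times, i.e., at atoms $(s,z)$ of some $\pi^i$ with $z \le f(X^{N,i}_{s-})$.

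At such a time, a direct computation gives
$$\Delta W^N_s = \frac{1}{\sqrt{N}}\delta_{X^{N,i}_{s-}}, \qquad \Delta M^N_s = \frac{1}{\sqrt{N}}D^*\mu^N_{s-},$$
whose $\|\cdot\|_{-4,p}$-norms are, by Lemma \ref{lem:6} and the Sobolev embedding $\|\psi\|_{C^{1,p}} \le C\|\psi\|_{4,p}$, bounded respectively by $\frac{C}{\sqrt{N}}(1+|X^{N,i}_{s-}|^p)$ and $\frac{C}{\sqrt{N}}\mu^N_{s-}(1+|\cdot|^p)$. For $\eta^N = \sqrt{N}(\mu^N - g_\cdot)$, continuity of $t\mapsto g_t$ ensures that its jumps coincide with those of $\sqrt{N}\mu^N$, so that for $\varphi \in \W_0^{4,p}$,
$$\Delta\eta^N_s(\varphi) = \frac{1}{\sqrt{N}}R\varphi(X^{N,i}_{s-}) + \frac{1}{\sqrt{N}}\sum_{j\ne i}\bigl(\varphi(X^{N,j}_{s-}+\tfrac{h}{N}) - \varphi(X^{N,j}_{s-})\bigr),$$
which a first-order Taylor expansion bounds, uniformly in $\|\varphi\|_{4,p} \le 1$, by $\frac{C}{\sqrt{N}}\bigl[(1+|X^{N,i}_{s-}|^p) + h\,\mu^N_{s-}(1+|\cdot|^p)\bigr]$.

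On the good set $G^N_T$ of \eqref{eq:gtn}, the bound \eqref{eq:aprioriboundgt} gives $\sup_{t\le T,\,i}|X^{N,i}_t| \le c_0 + 8hf(2h)T$, so $\mu^N_t(1+|\cdot|^p) \le C_T$ uniformly in $t \le T$. Hence for each of the three processes, $\sup_{t\le T}\|\Delta X^N_t\|_{-4,p} \le C_T/\sqrt{N}$ on $G^N_T$. Combined with the deviation estimate $\P((G^N_T)^c) \le ae^{-bNT}$ from \eqref{eq:aprioriboundgt}, this yields convergence to zero in probability. I expect the argument to be essentially mechanical once tightness is in hand; the only subtle point is the need for a uniform control of the jump sizes across both the spiking index $i$ and the time $t \le T$, which is precisely the role played by the good event $G^N_T$ together with its accompanying exponential deviation bound.
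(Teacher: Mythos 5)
Your proposal is correct and takes essentially the same approach as the paper: reduce to the vanishing-maximal-jump criterion, compute the jumps of each of the three processes at spike times, and bound them uniformly in $t \le T$ by controlling $\sup_{i,\,t\le T}|X^{N,i}_t|$. The only divergence is the last step: you restrict to the good event $G^N_T$ and invoke its exponential deviation bound to conclude convergence to zero in probability, whereas the paper applies the Poisson domination \eqref{eq:poisson} together with the strong law of large numbers for $N^N_T$ to obtain almost-sure vanishing --- both are sufficient for the criterion.
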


\begin{proof}
Following \cite{bill}, Theorem 13.4, it suffices to show that the maximal jump size within a fixed time interval converges to $0$ almost surely.   
Let us check this for $W_t^N ,$ for $ t \in [0, T ].$ We have for any $ \psi \in \W_0^{4, p}, $
\begin{equation}\label{eq:ztni}
 \Delta W_t^N ( \psi) =\frac{1}{\sqrt{N}} \sum_{i=1}^N \psi ( X_{t-}^{N, i } ) \Delta Z_t^{N, i },  \mbox{ where } 
 Z_t^{N, i } = \int_{ [0, t ] \times \r_+} \indiq_{\{ z \le f ( X_{s-}^{N, i } ) \}} \pi^i ( ds, dz ) .
\end{equation} 
As a consequence,  for all $ \psi \in \W_0^{4, p } $ with $ \| \psi\|_{4, p} = 1, $ since at each jump time $t,$ only one of the processes $ Z^{N, i } $ has a jump, 
$$\sup_{ t \le T }  |  \Delta W_t^N ( \psi)  | \le (C/ \sqrt{N}) (1 + \sup_{1 \le i \le N}  \sup_{t \le T } |X^{N, i }_t|^p) .$$ 
Thanks to Assumption \ref{ass:2}, using \eqref{eq:poisson},
$ \sup_{1 \le i \le N} \sup_{t \le T } |X^{N, i }_t|^p \le C( 1  + |N_T^N |^p) . $
By the strong law of large numbers, almost surely, 
$$ \lim_{N \to \infty } |N_T^N |^p = T^p f(2h)^p < \infty  $$
such that almost surely,
$$  \lim_{N \to \infty } \sup_{ t \le T }  |  \Delta W_t^N ( \psi)  | = 0.$$
Similarly, since $ \Delta \eta_t^N = \sqrt{N} \Delta \mu_t^N, $ 
$$ \Delta \eta_t^N (\psi) = \frac{1}{\sqrt{N}} \sum_{i=1}^N  \left( R \psi ( X_{t-}^{N, i } ) + \frac{h}{N} \sum_{j \neq i} \psi ' ( X^{N, j }_{t-} + \theta_t \frac{h}{N}) \right) \Delta Z_t^{N, i } ,$$
for some $ \theta_t \in (0, 1 ) , $ and
$$ \Delta  \int_0^t \mu^N_{s-} ( \psi ') d W^N_s ( 1)= \frac{1}{\sqrt{N}} \sum_{i=1}^N \left( \frac{1}{N} \sum_j \psi' ( X_{t-}^{N, j } )\right) \Delta Z_t^{N, i } ,$$
and these terms are treated analogously. 
\end{proof}

We close this section with the following
\begin{theorem}\label{theo:tightU}
Grant Assumptions \ref{ass:1} and \ref{ass:2}.
Then the sequence of laws of $( U^{N, i })_{1\le i \le n } $ is tight in $ D ( \R_+,\R^n ). $ 
\end{theorem}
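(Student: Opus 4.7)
My approach is to verify the Aldous tightness criterion coordinate-by-coordinate for each real-valued process $U^{N,i}$ in $D(\R_+,\R)$; since $\R^n$ is finite-dimensional, this yields tightness of the vector $(U^{N,1},\ldots,U^{N,n})$ in $D(\R_+,\R^n)$. Marginal tightness of $U^{N,i}_t$ at each $t$ follows immediately from the Sznitman coupling bound \eqref{eq:prop1}, which gives $\sup_{t \le T}\E|U^{N,i}_t| \le C_T$, together with Markov's inequality.

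Subtracting \eqref{eq:dynlimit} from \eqref{eq:dyn} and multiplying by $\sqrt{N}$, I obtain the decomposition
\begin{equation*}
U^{N,i}_t = -\alpha \int_0^t U^{N,i}_s \, ds + h W^N_t(1) + h\int_0^t \eta^N_s(f) \, ds - \int_{[0,t]\times\R_+} U^{N,i}_{s-}\indiq_{\{z \le f(\bar X^i_{s-})\}}\pi^i(ds,dz) + \rho^{N,i}_t,
\end{equation*}
where $\rho^{N,i}_t$ collects a harmless $O(N^{-1/2})$ self-correction arising from the absence of the $j=i$ term in \eqref{eq:dyn} and the mismatched-jump remainder $E^{N,i}_t := -\sqrt{N}\int X^{N,i}_{s-}\bigl(\indiq_{\{z \le f(X^{N,i}_{s-})\}} - \indiq_{\{z \le f(\bar X^i_{s-})\}}\bigr)\pi^i(ds,dz)$. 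Via Markov's inequality, Aldous' condition reduces to $\E|U^{N,i}_{\tau_N + \delta} - U^{N,i}_{\tau_N}| \le C\delta + o_N(1)$ uniformly in $({\mathcal F}_t)$-stopping times $\tau_N \le T$. For the first four pieces this is routine: a change of variable $u = s - \tau_N$ combined with the uniform $L^1$ bounds on $U^{N,i}_s$ and on $\eta^N_s(f)$ (the latter via \eqref{eq:uniforminTboundeta}) gives $C\delta$ for the two drifts; the martingale increment $W^N_{\tau_N + \delta}(1) - W^N_{\tau_N}(1)$ is controlled in $L^2$ by its angle bracket $\E\int_{\tau_N}^{\tau_N+\delta}\mu^N_s(f)\,ds \le C\delta$, using $f \le C(1 + |\cdot|^\alpha)$ and \eqref{eq:aprioribound1}; and the coupled reset integral is dominated in $L^1$ by its compensator $\E\int_{\tau_N}^{\tau_N+\delta}|U^{N,i}_s| f(\bar X^i_s)\,ds \le f(\bar C_T)\,C\delta$, invoking the deterministic bound $\bar X^i_s \le \bar C_T$ from \eqref{eq:aprioribound2}.

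The main obstacle is the mismatched-jump term $E^{N,i}_t$, which a priori carries the divergent prefactor $\sqrt{N}$. I plan to work on the high-probability set $G_T^N$ of \eqref{eq:gtn}, on which both $X^{N,i}_s$ and $\bar X^i_s$ lie in a deterministic compact, so that $f$ is Lipschitz on the relevant range. Dominating the signed integral by its compensator yields
\begin{equation*}
\E\bigl[|E^{N,i}_{\tau_N + \delta} - E^{N,i}_{\tau_N}|; G_T^N\bigr] \le C_T\sqrt{N}\,\E\int_{\tau_N}^{\tau_N+\delta}|X^{N,i}_s - \bar X^i_s|\,ds,
\end{equation*}
and the change-of-variable trick applied to \eqref{eq:prop1} turns this into $C_T\sqrt{N}\cdot\delta\cdot C_T N^{-1/2} = C\delta$. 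The complementary contribution on $(G_T^N)^c$ is handled by combining the exponential deviation bound $\P((G_T^N)^c) \le a e^{-bNT}$ from \eqref{eq:aprioriboundgt} with the moment estimate \eqref{eq:aprioribound1} via Cauchy-Schwarz, producing a term that vanishes as $N \to \infty$ and is therefore absorbed by choosing $N \ge N_0$. Summing all contributions yields Aldous for each coordinate, completing the proof of tightness of $(U^{N,1},\ldots,U^{N,n})$ in $D(\R_+,\R^n)$.
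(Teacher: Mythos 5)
The overall skeleton of your argument — deriving the decomposition of $U^{N,i}$ by subtracting \eqref{eq:dynlimit} from \eqref{eq:dyn}, then verifying Aldous' criterion coordinate-by-coordinate — matches the paper. However, there are two concrete gaps in the way you verify the modulus condition.

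First, you never establish the running-supremum bound
$\sup_N \E\bigl( \sup_{s \le T} \|U^N_s\| \bigr) < \infty$
(the paper's \eqref{eq:uniformupperboundun}), which is the key quantitative input for Aldous' condition. Your "change of variable $u = s - \tau_N$" step would turn $\E \int_{\tau_N}^{\tau_N + \delta} |U^{N,i}_s|\,ds$ into $\int_0^\delta \E |U^{N,i}_{\tau_N + u}|\,du$, but the bound \eqref{eq:prop1} gives $\sup_{t \le T} \E |U^{N,i}_t| \le C_T$ only for \emph{deterministic} $t$; $\tau_N + u$ is a stopping time, and a marginal bound does not carry over to stopping times without an argument. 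The same issue occurs in your bound on the compensator of the coupled reset integral, $\E \int_{\tau_N}^{\tau_N+\delta}|U^{N,i}_s|\,f(\bar X^i_s)\,ds$. Notice that you do cite the running-sup version \eqref{eq:uniforminTboundeta} for $\eta^N(f)$ — the same kind of bound is needed for $U^{N,i}$ itself, and the paper proves it in a dedicated step (via the decomposition \eqref{eq:utn}, Burkholder--Davis--Gundy for $W^N(1)$, the TV estimate \eqref{eq:tv}, and the deviation bound on $(G_T^N)^c$) before turning to the modulus condition.

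Second, and more seriously, your treatment of the mismatched-jump term $E^{N,i}$ relies on $\E \int_{\tau_N}^{\tau_N+\delta} |X^{N,i}_s - \bar X^i_s|\,ds \le C_T \delta N^{-1/2}$, which would require a running-supremum (or stopping-time) strengthening of \eqref{eq:prop1}. This strengthening is \emph{not available} here: as the paper stresses in Remark \ref{rem:2}, the strong $L^1$ error in \eqref{eq:prop1} is a pointwise-in-$t$ statement for jump processes and cannot be improved the way it can for diffusions. The paper sidesteps this by a structural trick: since $\|Z^{N,i} - \bar Z^i\|_{TV,[0,T]}$ is integer-valued, $\P\bigl( \sup_{t \le T} |E^{N,i}_t| > 0 \bigr) \le \P\bigl( \|Z^{N,i} - \bar Z^i\|_{TV,[0,T]} \geq 1 \bigr) \le \E \|Z^{N,i} - \bar Z^i\|_{TV,[0,T]} \le C_T N^{-1/2}$, so the entire mismatched-jump remainder vanishes in probability uniformly on $[0,T]$ — an $o_N(1)$ in the Aldous bound rather than a $C\delta$. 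Your attempt to extract a $C\delta$ from this term is both harder than necessary and, as written, not supported by the available estimates; you should instead adopt the TV-distance argument and absorb $E^{N,i}$ into the $o_N(1)$.

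To repair the proof: (i) first prove the running-sup bound $\sup_N \E\bigl( \sup_{s \le T} \|U^N_s\| \bigr) < \infty$ from the decomposition, using $\E \int_0^T \|U^N_s\|\,ds \le C_T$ (via Fubini and \eqref{eq:prop1}), BDG for $W^N(1)$, \eqref{eq:uniforminTboundeta}, and the TV/deviation estimates for the remainder; (ii) then the drift and compensator integrals over $[\tau_N, \tau_N + \delta]$ are bounded by $\delta$ times that running sup; (iii) treat $E^{N,i}$ via the TV trick as $o_N(1)$, not as $O(\delta)$.
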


\begin{proof}
Our proof relies once more on the criterion of Aldous, now stated for $\R^n-$valued processes having c\`adl\`ag paths, see Jacod and Shiryaev \cite[Theorem VI. 4.5 page 356]{js}. More precisely, writing $ U^N = (U^{N, 1} , \ldots, U^{N, n})$ and $ \| u \| = \sum_{i=1}^n |u^i |  $ for the $L^1-$ norm on $ \R^n,$   we shall prove that

(a) for all $ T> 0$, all $\varepsilon>0$,
$ \lim_{ \delta \downarrow 0} \limsup_{N \to \infty } \sup_{ (S,S') \in A_{\delta,T}} 
\P ( \|U_{S'}^{N } - U_S^{N  } \| > \varepsilon ) = 0$,
where $A_{\delta,T}$ is the set of all pairs of stopping times $(S,S')$ such that
$0\leq S \leq S'\leq S+\delta\leq T$ a.s.,

(b) for all $ T> 0$, $\lim_{ K \uparrow \infty } \sup_N 
\P ( \sup_{ t \in [0, T ] } \|U_t^{N }\| \geq K ) = 0$.

To show (b), we start with the decomposition 
\begin{multline}\label{eq:utn}
U^{N, i }_t =  - \alpha \int_0^t U^{N,i}_s ds + h W_t^N ( 1) + h \int_0^t \eta_s^N ( f) ds \\
- \int_{ [0, t ] \times \R_+} U^{N, i }_{s- } \indiq_{\{ z \le f ( \bar X^i_{s-} ) \}} \pi^i ( ds, dz ) + \bar R_t^{N,i  } ,
\end{multline}
for all $ 1 \le i \le n,$ where 
\begin{multline} \bar R_t^{N, i } = - \frac{h}{\sqrt{N}}  \int_{ [0, t ] \times \R_+}\indiq_{\{ z \le f ( \bar X^i_{s-} ) \}} \tilde \pi^i ( ds, dz ) \\\
- \sqrt{N}  \int_{ [0, t ] \times \R_+} X^{N, i}_{s- } \left( \indiq_{\{ z \le f ( X^{N, i}_{s-} )  \}} - \indiq_{\{ z \le f( \bar X^i_{s-} ) \}} \right)  \pi^i ( ds, dz )  .
\end{multline}

We first show that \eqref{eq:utn} implies
\begin{equation}\label{eq:uniformupperboundun}
 \sup_N \E ( \sup_{ s \le T } \| U_s^N \| ) < \infty ;
\end{equation} 
once \eqref{eq:uniformupperboundun} is shown, (b) follows immediately. 

To prove \eqref{eq:uniformupperboundun}, notice that \eqref{eq:utn} implies
\begin{multline*}
 \sup_{ s \le T } \| U_s^N \| \le \alpha \int_0^T \| U_s^N\| ds + h\;  n  \sup_{s \le T } |W_s^N( 1) | + h \; n \int_0^T | \eta_s^N (f) | ds \\
+ \sum_{i=1}^n \int_{[0,T] \times \R_+} | U^{N, i }_{s- }| \indiq_{\{ z \le f ( \bar X^i_{s-} ) \}} \pi^i ( ds, dz ) + \sup_{ s \le T } \sum_{i=1}^n | \bar R_s^{N, i } | .
\end{multline*}
By \eqref{eq:prop1}, 
\begin{equation}\label{eq:upperboundut}
\sup_N \E \int_0^T \| U_s^N\| ds \le n T C_T  .
\end{equation}

Moreover, by Burkholder-Davis-Gundy's inequality for discontinuous martingales and \eqref{eq:aprioribound1} once more, 
$$ \E \sup_{ s \le T } | W_s^N(1)|^2 \le C \frac1N \E \int_0^T \bar f ( X^N_s) ds \le C_T, \mbox{ such that }
 \sup_N \E \sup_{ s \le T } | W_s^N(1)| \le \sqrt{C_T}.$$ 
We also use the upper bound 
$ |\eta_s( f) | \le \| \eta_s^N\|_{-3, \alpha+1 } \| f\|_{3, \alpha+1  } $ 
and \eqref{eq:uniforminTboundeta}  to deal with the term  $h \; n  \int_0^T | \eta_s^N (f) | ds .$ 
Moreover, since for all $ s \le T $ and for all $ 1 \le i \le n , $  $ \bar X^i_{s-}  \le \bar C_T $ by \eqref{eq:aprioribound2}  such that $ f( \bar X^i_{s-} ) \le  f( \bar C_T) ,$ 
$$ \sum_{i=1}^n \E  \int_{[0,T] \times \R_+} | U^{N, i }_{s- }| \indiq_{\{ z \le f ( \bar X^i_{s-} ) \}} \pi^i ( ds, dz )\le  f( \bar C_T) \E \int_0^T  | U^N_s| ds $$
which is treated using \eqref{eq:upperboundut}.

Finally to deal with  $ \sup_{t \le T } | \bar R_t^{N, i } |, $ we first observe that the first term appearing in the decomposition of $ \bar R^{N, i } $ satisfies, using once more that $ f ( \bar X^i_{s-} ) \le f ( \bar C_T), $ for all $ s \le T,$
$$  \E | \frac{h}{\sqrt{N}}  \int_{ [0, T ] \times \R_+}\indiq_{\{ z \le f ( \bar X^i_{s-} ) \}} \tilde \pi^i ( ds, dz )| \le T  f( \bar C_T)  2h / \sqrt{N} .$$
Moreover, using the set $ G_T^N $ introduced in \eqref{eq:gtn} above and the fact that $\sup_{ s \le T } |X_s^{N, i } |  \le C_T $ on $G_T^N, $ we have the upper bound for the second  term appearing in the decomposition of $ \bar R^{N, i } $
\begin{multline*}
  \sup_{ t \le T } |\sqrt{N}  \int_{ [0, T ] \times \R_+} X^{N, i}_{s- } \left( \indiq_{\{ z \le f ( X^{N, i}_{s-} )  \}} - \indiq_{\{ z \le f( \bar X^i_{s-} ) \}} \right)  \pi^i ( ds, dz ) | \\
   \le   C_T \sqrt{N}\int_{ [0, T ] \times \R_+} \left| \indiq_{\{ z \le f ( X^{N, i}_{s-} )  \}} - \indiq_{\{ z \le f( \bar X^i_{s-} ) \}} \right|  \pi^i ( ds, dz ) +\\
   \indiq_{ (G_T^N)^c} \sup_{ s \le T } |X_s^{N, i } |  \sqrt{N} \int_{ [0, T ] \times \R_+} [ \indiq_{\{ z \le f ( X^{N, i}_{s-} )  \}} +  \indiq_{\{ z \le f( \bar C_T)  \}} ]    \pi^i ( ds, dz ).
\end{multline*}
The first line of the rhs is treated using \eqref{eq:tv}, the second using the a priori estimates \eqref{eq:aprioribound1} and the deviation estimate on $ \P (G_T^N)^c.$ 
All in all this implies 
$$ \sum_{i=1}^n \sup_N \E   \sup_{ t \le T } |\sqrt{N}  \int_{ [0, t ] \times \R_+} X^{N, i}_{s- } \left( \indiq_{\{ z \le f ( X^{N, i}_{s-} )  \}} - \indiq_{\{ z \le f( \bar X^i_{s-} ) \}} \right)  \pi^i ( ds, dz ) | \le C_T < \infty ,$$ 
and we have just finished the proof of \eqref{eq:uniformupperboundun}.

We finish this step with the observation that $  \sup_{t \le T } | \bar R_t^{N, i } | $ converges to $0 $ in probability, as $ N \to \infty,$ for any $ 1 \le i \le n .$ We only need to consider
\begin{multline*}
  \sup_{ t \le T } |\sqrt{N}  \int_{ [0, t ] \times \R_+} X^{N, i}_{s- } \left( \indiq_{\{ z \le f ( X^{N, i}_{s-} )  \}} - \indiq_{\{ z \le f( \bar X^i_{s-} ) \}} \right)  \pi^i ( ds, dz ) | \\
   \le \sqrt{N} \sup_{t\le T} | X^{N, i}_{t} | \;  \| Z^{N, i } - \bar Z^i\|_{TV, [0, T ] },
\end{multline*}
such that for any $ \varepsilon > 0, $ 
\begin{multline}\label{eq:lastpoint}
 \P (  \sup_{ t \le T } |\sqrt{N}  \int_{ [0, t ] \times \R_+} X^{N, i}_{s- } \left( \indiq_{\{ z \le f ( X^{N, i}_{s-} )  \}} - \indiq_{\{ z \le f( \bar X^i_{s-} ) \}} \right)  \pi^i ( ds, dz ) |  \geq  \varepsilon ) \\
 \le \P (  \| Z^{N, i } - \bar Z^i\|_{TV, [0, T ] } \geq 1 ) \le \E  \| Z^{N, i } - \bar Z^i\|_{TV, [0, T ] } \le C_T N^{ - 1/2 } \to 0  
\end{multline} 
as $N \to \infty ,$ where we have used once more \eqref{eq:tv}.

Finally, (a) follows from the fact that \eqref{eq:utn} implies 
\begin{multline*}
\| U^N_{S'} - U^N_S \| \le 
C  \delta \left(  \sup_{ s \le T } \|U^N_s\| + n \sup_{ s \le T } |\eta_s ( f ) | \right)  \\
+  \sum_{i=1}^n \int_{ [S, S'] \times \r_+}  |U^{N, i }_{s-}|  \indiq_{\{ z \le f ( \bar X^i_{s-}) \}} \pi^i (ds, dz ) + \sup_{t \le T} \sum_{i=1}^n |\bar R_t^{N, i } | \\
+ h n  | W^N_{S'} (1) - W^N_S ( 1) |.
\end{multline*}
The first line of the rhs is treated using \eqref{eq:uniformupperboundun} and \eqref{eq:uniforminTboundeta}. To deal with the second line we use that 
$$\sum_{i=1}^n  \E  \int_{[S,S'] \times \R_+} | U^{N, i }_{s- }| \indiq_{\{ z \le f ( \bar X^i_{s-} ) \}} \pi^i ( ds, dz )\le  f( \bar C_T) \E \int_S^{S'}   \| U^N_s\| ds \le  f( \bar C_T) \delta \E \sup_{s \le T } \|U_s^N\| $$
and that $\sup_{t \le T} \sum_{i=1}^n  |\bar R_t^{N, i } | $ converges to $0$ in probability. Finally, 
$$ \E  | W^N_{S'} (1) - W^N_S ( 1) |^2 = \E \int_S^{S'} \bar f (X^N_s) ds \le \delta \E \sup_{ s \le T } \bar f ( X^N_s) = C_T \delta,$$
which concludes the proof. 
\end{proof}

\section{Characterization of the limit}\label{sec:5}
In this section we study the possible limits of the sequence of $ \eta^N.$ 
Recall the definition of $ W$ in \eqref{eq:wt}. We start with the following preliminary result. 

\begin{proposition}
Grant Assumptions \ref{ass:1} and \ref{ass:2}. Then for any $ p > \alpha + \frac12 , $ the sequence of processes $ W^N$ converges in $ D ( \R_+, \W_0^{-4, p} ) $ to $W.$ 
\end{proposition}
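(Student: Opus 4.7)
The plan is to combine the tightness already obtained in Theorem \ref{theo:tight} with a characterization of any limit point as a Gaussian martingale, via convergence of predictable quadratic variations (Rebolledo's theorem).

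First, extract subsequential limits. By Theorem \ref{theo:tight} the sequence $(W^N)_N$ is tight in $D(\R_+, \W_0^{-4,p})$, and by the preceding proposition every limit point is concentrated on $C(\R_+, \W_0^{-4,p})$. Fix any subsequence and extract a further subsequence $N_k$ along which $W^{N_k}$ converges in law to some continuous process $\tilde W$ taking values in $\W_0^{-4,p}$. It suffices to prove $\tilde W \stackrel{d}{=} W$, as this forces the full sequence to converge.

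Second, reduce to test functions. For any $\psi \in \W_0^{4,p}$, the linear functional $\eta \mapsto \langle \eta, \psi\rangle$ is continuous from $\W_0^{-4,p}$ to $\R$, so $W^{N_k}(\psi) \Rightarrow \tilde W(\psi)$ in $D(\R_+, \R)$. By Proposition \ref{prop:9}, $\sup_N \E \sup_{t \le T}|W^N_t(\psi)|^2 \le C_T \|\psi\|_{4,p}^2 < \infty$, so the prelimit martingales are uniformly integrable and $\tilde W(\psi)$ is a continuous square integrable martingale.

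Third, pass to the limit in the angle bracket. Recall
$$\langle W^N(\psi)\rangle_t = \int_0^t \mu_s^N(f\psi^2)\, ds.$$
Since $f\psi^2$ is of polynomial growth (at most $1 + |x|^{2p+\alpha}$), the propagation of chaos estimate \eqref{eq:prop2} combined with the uniform moment bound \eqref{eq:aprioribound1} gives $\E|\mu_s^N(f\psi^2) - g_s(f\psi^2)| \to 0$ uniformly in $s \le T$ (using e.g. a truncation argument: on $G_T^N$ of \eqref{eq:gtn} we can use Lipschitz estimates on $f\psi^2$ restricted to $[0, c_0 + 8hf(2h)T]$, and off $G_T^N$ we control moments). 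Hence $\langle W^{N_k}(\psi)\rangle_t \to \int_0^t g_s(f\psi^2)\, ds$ for every $t \ge 0$. Moreover, the preceding proposition shows that $\sup_{t \le T}|\Delta W^N_t(\psi)| \to 0$ almost surely, so the jump contribution of the quadratic variation vanishes. By Rebolledo's CLT (\cite{joffe}, or Jacod--Shiryaev VIII.3.11), $\tilde W(\psi)$ is a continuous Gaussian martingale with
$$\langle \tilde W(\psi)\rangle_t = \int_0^t g_s(f\psi^2)\, ds = \langle W(\psi)\rangle_t.$$

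Fourth, upgrade to joint convergence and conclude. For any finite family $\psi_1, \dots, \psi_m \in \W_0^{4,p}$ and scalars $\lambda_1, \dots, \lambda_m$, the same argument applied to $\psi = \sum_i \lambda_i \psi_i$ together with polarization shows that the vector $(W^{N_k}(\psi_1), \dots, W^{N_k}(\psi_m))$ converges in law to a centered Gaussian martingale with cross-brackets $\int_0^t g_s(f \psi_i \psi_j)\, ds$. These are exactly the cross-brackets of $(W(\psi_1), \dots, W(\psi_m))$, so the finite-dimensional distributions of $\tilde W$ and $W$ coincide, forcing $\tilde W \stackrel{d}{=} W$ as processes in $C(\R_+, \W_0^{-4,p})$.

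The main technical obstacle is Step 3: the function $f\psi^2$ is unbounded, so one cannot directly invoke $\W_1$-convergence of $\mu_s^N$ to $g_s$. The remedy is the standard split on $G_T^N$ versus its complement, exploiting \eqref{eq:aprioriboundgt} to get a deterministic bound on the support of $\mu_s^N$ with overwhelming probability, and the polynomial moment control \eqref{eq:aprioribound1} to handle the exceptional set.
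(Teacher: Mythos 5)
Your proof is correct and follows the same overall strategy as the paper (tightness, continuity of limit paths, identification of the limit martingale via convergence of the predictable quadratic variation, then Rebolledo's CLT), but the key technical step --- convergence of the angle bracket --- is handled differently. The paper writes
\[
\bigl\langle\, \langle\!\langle W^N\rangle\!\rangle_t(\psi_1),\psi_2\,\bigr\rangle - \int_0^t g_s(f\psi_1\psi_2)\,ds
 = \frac{1}{\sqrt N}\int_0^t \eta_s^N(f\psi_1\psi_2)\,ds,
\]
and then invokes the already-proved uniform bound $\sup_N \E\sup_{t\le T}\|\eta_t^N\|_{-3,\bar p}<\infty$ of Proposition \ref{prop:14} (with $\bar p=2p+\alpha$, valid since $\bar p>\alpha+\tfrac12$) together with the Sobolev product estimate $f\psi_1\psi_2\in\W_0^{3,2p+\alpha}$; this yields an explicit $O(N^{-1/2})$ rate for free. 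You instead argue from propagation of chaos in $\W_1$ (estimate \eqref{eq:prop2}) combined with the usual truncation on $G_T^N$ and the moment bounds \eqref{eq:aprioribound1}. Both routes are sound: the paper's is shorter because it recycles machinery already in place and gives a rate, whereas yours is more self-contained at the cost of redoing the $G_T^N$ decomposition. One small wrinkle in your version: the bound from Proposition \ref{prop:9} naturally controls $\sup_N\E\sup_{t\le T}|W_t^N(\psi)|^2$ in terms of $\|\psi\|_{1,p}^2$, not $\|\psi\|_{4,p}^2$; this is of course stronger than what you wrote, so there is no gap. Your Step 4 (linear combinations and Cram\'er--Wold to get joint convergence of finite-dimensional distributions, then identification of the Hilbert-space-valued limit) is a more explicit account of what the paper compresses into ``following the lines of the proof of Prop.\ 5.3 in \cite{chevallier}.''
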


\begin{proof}
We already have proven the tightness of $ W^N.$ To identify any possible limit, consider, for any $ \psi_1, \psi_2 \in \W_0^{4, p } , $ the difference 
$$ < \; <\!\!\!< W^N >\!\!\!>_t (\psi_1) , \psi_2 > - \int_0^t < g_s , \psi_1 \psi_2 f > ds =\int_0^t <\mu_s^N - g_s , \psi_1 \psi_2 f > ds  .$$
We have that 
$$ \E |\int_0^t <\mu_s^N - g_s , \psi_1 \psi_2 f > ds| = \frac{1}{\sqrt{N}}\E |\int_0^t <\eta^N_s  , \psi_1 \psi_2 f > ds|  \to 0 $$
as $N \to \infty ,$ where this last convergence follows from Proposition \ref{prop:14} and Sobolev's embedding theorem. More precisely, since $f \in C^{3, \alpha } $ and $ \psi_1 \in  \W_0^{4, p } \subset C^{3, p } ,$ we have $ \psi_1 \psi_2 f \in \W_0^{3, 2p + \alpha  } $ such that 
$$ |<\eta^N_s  , \psi_1 \psi_2 f > | \le \sup_{s \le t} \| \eta^N_s\|_{3, 2p + \alpha } \| \psi_1 \psi_2 f \|_{3, 2p + \alpha}.$$

Moreover we have already shown that the maximal jump size of $ W^N $ converges to $0$ almost surely. Then the result follows from Rebolledo's central limit theorem for local martingales, following the lines of the proof of Prop. 5.3 in \cite{chevallier}.
\end{proof}

Coming back to the decomposition of $ \eta^N $ in \eqref{eq:12}, we see that we need to consider the joint convergence of $ R^* W^N $ and $ \int_0^\cdot \mu^N_{s-} d W^N_s (1) $ since both 
martingales depend on the same underlying Poisson noise. 

\begin{proposition}Grant Assumptions \ref{ass:1} and \ref{ass:2} and fix $ p >  \alpha + \frac12 . $   Then we have convergence in law in $D ( \R_+, \W_0^{- 4, p } \times \W_0^{4, p } ) $ of $(R^* W^N ,  \int_0^\cdot D^* \mu^N_{s-} d W^N_s (1)) $ to the limit process
$$ ( R^* W , \int_0^\cdot D^* g_s  d W_s ( 1) ).$$
\end{proposition}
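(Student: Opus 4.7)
The plan is to rely on the already-established tightness of each marginal together with Rebolledo's CLT for c\`adl\`ag Hilbert-space valued martingales from \cite{joffe}. Joint tightness in $D(\R_+, \W_0^{-4,p} \times \W_0^{-4,p})$ follows from Theorem \ref{theo:tight}, using for the first component that $R:\W_0^{k,p} \to \W_0^{k,p}$ is continuous by Lemma \ref{prop:5}, so that the same holds for $R^*$ on the dual. The fact that every limit point is concentrated on $C(\R_+, \W_0^{-4,p} \times \W_0^{-4,p})$ follows by the same argument as in the previous proposition, since the jumps of both $R^* W^N_t(\psi)$ and $\int_0^t \mu^N_{s-}(\psi') \, dW^N_s(1)$ are uniformly bounded by $C N^{-1/2}(1+\sup_{i \le N,\, s \le T} |X^{N,i}_s|^p)$, which vanishes almost surely thanks to \eqref{eq:poisson} and the strong law of large numbers applied to $N^N_T$.

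It then remains to identify any possible limit. I would view the pair as a single $(\W_0^{-4,p} \times \W_0^{-4,p})$-valued purely discontinuous martingale; testing against an arbitrary $(\psi_1,\psi_2) \in (\W_0^{4,p})^2$ produces the scalar martingale
\begin{equation*}
M^N_t(\psi_1,\psi_2) = \frac{1}{\sqrt N} \sum_{i=1}^N \int_{[0,t]\times\R_+} \bigl( R\psi_1(X^{N,i}_{s-}) + \mu^N_{s-}(\psi_2') \bigr) \indiq_{\{z\le f(X^{N,i}_{s-})\}} \tilde\pi^i(ds,dz),
\end{equation*}
whose predictable angle bracket expands as
\begin{equation*}
\int_0^t \mu^N_s\bigl(f (R\psi_1)^2\bigr)\, ds + 2 \int_0^t \mu^N_s(\psi_2')\, \mu^N_s(f R\psi_1)\, ds + \int_0^t (\mu^N_s(\psi_2'))^2\, \mu^N_s(f)\, ds.
\end{equation*}
Each factor of the form $\mu^N_s(\varphi)$ converges in $L^1$ to $g_s(\varphi)$ at rate $N^{-1/2}$, since $|\mu^N_s(\varphi) - g_s(\varphi)| = N^{-1/2}|\eta^N_s(\varphi)|$ and $\sup_{s \le T}\sup_N \E|\eta^N_s(\varphi)| < \infty$ for $\varphi \in \W_0^{2,p}$ by Proposition \ref{prop:8}; the admissible test functions here are $f$, $fR\psi_1$, $f(R\psi_1)^2$, $\psi_2'$, which all lie in a suitable weighted Sobolev space thanks to Lemma \ref{prop:5} and Assumption \ref{ass:1}. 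Consequently the bracket converges to
\begin{equation*}
\int_0^t g_s(f(R\psi_1)^2)\,ds + 2\int_0^t g_s(\psi_2')\, g_s(f R\psi_1)\, ds + \int_0^t (g_s(\psi_2'))^2\, g_s(f)\, ds,
\end{equation*}
which is precisely the predictable bracket at time $t$ of $W_t(R\psi_1) + \int_0^t g_s(\psi_2') \, dW_s(1)$.

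The main obstacle will be lifting this pointwise (in the pair of test functions) convergence of brackets to the convergence of the trace of the full operator-valued predictable quadratic variation process required by the Hilbert-space version of Rebolledo's theorem. As in the proof of Theorem \ref{theo:tight}, I would choose an orthonormal basis $(\varphi_k)_k$ of $\W_0^{4,p}$ consisting of $C^\infty_c$ functions, rewrite the trace of the operator bracket as a sum over $k$ of evaluations of the type computed above at $(\varphi_k,\varphi_k)$, and dominate using the identity $\sum_k \varphi_k(x)^2 = \|\delta_x\|^2_{-4,p} \le C(1+|x|^{2p})$ from Lemma \ref{lem:6}, together with $f(x) \le C(1+|x|^\alpha)$ and the a priori moment bound \eqref{eq:aprioribound1}. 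Dominated convergence then yields convergence of the traces; combining this with joint tightness and with the vanishing jump condition already verified, Rebolledo's theorem delivers convergence in law of the pair to the Gaussian limit $(R^* W, \int_0^\cdot D^* g_s \, dW_s(1))$, whose bracket is identified by the formula above as the one expected from the white-noise structure of $W$ in \eqref{eq:wt}.
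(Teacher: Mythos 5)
Your route is genuinely different from the paper's. The paper first decomposes $\int_0^\cdot D^*\mu^N_{s-}\,dW^N_s(1)=\int_0^\cdot D^*g_s\,dW^N_s(1)+E^N$ and shows $E^N\to 0$ in $\W_0^{-4,p}$, then replaces $D^*g_s$ by a piecewise-constant (deterministic) approximation $D^*g^\varepsilon_s$, which reduces the integral to a finite linear combination of increments of $W^N(1)$; joint convergence with $R^*W^N$ then drops out of the continuous mapping theorem together with the already-established convergence $W^N\to W$ in $D(\R_+,\W_0^{-4,p})$, and the discretization error is sent to $0$ uniformly in $N$ at the end. You instead treat the pair directly as a single Hilbert-space-valued purely discontinuous martingale, compute the joint angle bracket at arbitrary test pairs $(\psi_1,\psi_2)$, match it to the bracket of $(R^*W,\int_0^\cdot D^*g_s\,dW_s(1))$, and appeal to the Hilbert-space Rebolledo CLT. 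Your bracket algebra is correct, and the approach is sound, but note that it shifts the technical burden to two places that the paper's reduction avoids: (i) the bracket involves products such as $(\mu^N_s(\psi_2'))^2\mu^N_s(f)$, and $L^1$ convergence of each factor alone does not immediately give $L^1$ (or in-probability) convergence of the product — you will need to work on the high-probability event $G^N_T$ of \eqref{eq:gtn}, where all factors are uniformly bounded, and handle $(G^N_T)^c$ by the exponential deviation bound, exactly as in Step 1 of the paper's proof; (ii) upgrading pointwise bracket convergence to trace convergence requires a domination argument for the cross and quadratic terms in $\psi_k'$, using $\sum_k(\psi_k')^2(x)=\|D^*\delta_x\|^2_{-4,p}\le C(1+|x|^{2p})$ together with Jensen's inequality, again on $G^N_T$. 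What your approach buys is that it bypasses the piecewise-constant approximation entirely and never needs to pass to the limit $\varepsilon\to 0$; what the paper's approach buys is that, once the integrand is deterministic and piecewise constant, the joint convergence is an immediate consequence of $W^N\to W$ and continuity, with no new invocation of a martingale CLT needed.
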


\begin{proof}
We have already shown the tightness of $ ( R^* W^N , \int_0^\cdot D^* \mu^N_{s-}  d W^N_s ( 1) ),$ and we know that we have convergence in law $ W^N \to W .$  To prove the above convergence we first decompose 
$$   \int_0^\cdot D^* \mu^N_{s-} d W^N_s (1) = \int_0^\cdot D^* g_s d W_s^N ( 1) + E^N , $$
where 
\begin{equation}\label{eq:en}
 E^N= \int_0^\cdot D^* ( \mu_{s-}^N - g_s) d W_s^N(1) .
\end{equation} 

{\bf Step 1.} We show that $\E [ \sup_{t \le T } \| E^N_t\|^2_{-4, p } ]  \to 0 $ as $ N \to \infty.$ For that sake, let $ ( \psi_k)_k$ be an orthonormal basis of $ \W_0^{4, p}, $ composed of $ C^\infty_c-$functions. We have that 
$$ \sup_{t \le T } \| E^N_t\|^2_{-4, p } \le  \sum_k \sup_{t \le T} \left( \int_0^t  ( \mu_{s-}^N - g_s) ( \psi_k ' )  d W_s^N (1)  \right)^2,$$
such that 
$$ \E  \sup_{t \le T } \| E^N_t\|^2_{-4, p } \le 4  \sum_k \E \int_0^T \mu_s^N(f) [( \mu_{s}^N - g_s) ( \psi_k ' ) ]^2 ds \le  C \E \int_0^T \mu_s^N(f) \| D^* ( \mu_s^N - g_s) \|^2 _{-4, p } ds.$$
On $G_T^N, $ we upper bound  
$$ \| D^* ( \mu_s^N - g_s) \|^2 _{-4, p }  \le C \|  \mu_s^N - g_s \|^2 _{-3, p } \le C \|\mu_s^N - g_s \|_{-3, p }  \sup_{ s \le T } \left(\| \mu_s^N \|_{-3, p} +\|g_s\|_{-3, p } \right)$$
and use that on $G_T^N, $ $  \sup_{ s \le T } \mu_s^N(f) \left( \| \mu_s^N \|_{-3, p} +\|g_s\|_{-3, p }  \right) \le  C_T $ such that 
$$ \E  \left( \sup_{t \le T } \| E^N_t\|^2_{-4, p } ; G_T^N \right) \le C_T \E \int_0^T \|\mu_s^N - g_s \|_{-3, p }ds= \frac{C_T}{\sqrt{N}} \E \int_0^T \|\eta_s^N\|_{-3, p }ds,$$
which converges to $0$ as $N \to \infty $ thanks to Proposition \ref{prop:8}. Moreover, on $ (G_T^N)^c, $ we upper bound 
$$\| D^* ( \mu_s^N - g_s) \|^2 _{-4, p }  \le C \|  \mu_s^N - g_s \|^2 _{-3, p }  \le C_T(1+  \| \mu_s^N\|_{-3, p }^2) .$$
Using \eqref{eq:poisson},
$$ \| \mu_s^N\|^2_{- 3, p } \le C \left( 1 + \left( N_T^N \right)^{2p} \right) $$
and a similar bound for $ \mu_s^N ( f) , $ we obtain 
$$ \E  \left( \sup_{t \le T } \| E^N_t\|^2_{-4, p } ; (G_T^N)^c \right) \le C_T e^{ - (b/2) N T } \to 0 $$
as $ N \to \infty .$ 

As a consequence of this step, it suffices to prove the convergence in law 
$$ ( R^* W^N , \int_0^\cdot D^* g_{s}  d W^N_s ( 1) )\to ( R^* W , \int_0^\cdot D^* g_s  d W_s ( 1) ),$$
as $N \to \infty.$ 

{\bf Step 2.} We now replace the process $ D^* g_s $ serving as integrand by a process which is piecewise constant over time intervals of step-size $ \varepsilon > 0 .$ We put 
$$ \eg_s := g_{\delta (s) } , \;  \delta ( s) = k \varepsilon \mbox{ for all }  k \varepsilon \le s < (k+1) \varepsilon, k \geq 0,$$ 
and let
$$ M^{\varepsilon } := \int_0^\cdot D^* ( \eg_s- g_s) d W_s (1) .$$ 
Using similar arguments as in Step 1, we have 
$$
\E \sup_{s \le T } \| M^{\varepsilon }_s\|^2_{-4, p } \le C \sum_k  \int_0^T g_s ( f) [ (\eg_s - g_s) ( \psi_k ' )]^2 ds \le C_T \int_0^T \| \eg_s - g_s\|^2_{- 3, p } ds .$$
Using Lemma \ref{prop:lemma20} stated in the Appendix below, this last expression is upper bounded by $ C_T \varepsilon^2.$ 

We introduce similarly 
$$ M^{N,  \varepsilon } := \int_0^\cdot D^* ( \eg_s- g_s) d W^N_s (1) $$
and have, since $\sup_N \E \sup_{s \le T } \mu_s^N ( f) \le C_T, $   
$$
\E \sup_{s \le T } \| M^{N, \varepsilon }_s\|^2_{-4, p } \le C \sum_k \E  \int_0^T \mu^N_s ( f) [ (\eg_s - g_s) ( \psi_k ' )]^2 ds \le C_T \int_0^T \| \eg_s - g_s\|^2_{- 3, p } ds  \le C_T \varepsilon^2,$$
where the constant $C_T$ does not depend on $N.$
As a consequence of this step, it suffices to prove the joint convergence of 
$$ ( R^* W^N , \int_0^\cdot D^* \eg_{s}  d W^N_s ( 1) ) \to  R^* W , \int_0^\cdot D^* \eg_{s}  d W_s ( 1) ),$$ 
as $N \to \infty, $ for each fixed $ \varepsilon > 0.$ 

{\bf Step 3.} 
To do so,  it suffices to prove convergence of the marginal laws 
\begin{equation}\label{eq:fidi1}
 (( R^* W^N_{t_1}, \int_0^{t_1} D^* \eg_s d W_s^N (1)  ), \ldots, ( R^* W^N_{t_k}, \int_0^{t_k} D^* \eg_s d W_s^N (1)  ))
\end{equation} 
to the associated limit 
\begin{equation}\label{eq:fidi2}
 (( R^* W_{t_1}, \int_0^{t_1} D^* \eg_s d W_s (1) ), \ldots, ( R^* W_{t_k}, \int_0^{t_k} D^* \eg_s d W_s (1)  )),
\end{equation} 
for any $ k \geq 1, t_1 \le t_2 \le \ldots \le t_k \le T.$  Note that we can rewrite 
$$ \int_0^t D^* \eg_s d W^N_s ( 1) = \sum_{ k : k \varepsilon \le t } D^* \eg_{ k \varepsilon} \left(W^N_{ (k+1) \varepsilon \wedge t } (1)  - W^N_{ k \varepsilon}(1)  \right) .$$
Since  we have convergence in law in $ D ( \r_+, \R)$ of $ W^N ( 1) $ to the limit process $ W(1) $ which is continuous, the above expression converges in law to 
$$  \sum_{ k : k \varepsilon \le t } D^* \eg_{ k \varepsilon} \left(W_{ (k+1) \varepsilon \wedge t } (1)  - W_{ k \varepsilon}(1)  \right)=  \int_0^t D^* \eg_s d W_s ( 1),$$
such that the convergence in law of \eqref{eq:fidi1} to \eqref{eq:fidi2} is indeed implied. Finally,  letting $ \varepsilon \to 0, $ the convergence of the finite dimensional distributions 
$$ (( R^* W^N_{t_1}, \int_0^{t_1} D^* g_s d W_s^N (1)  ), \ldots, ( R^* W^N_{t_k}, \int_0^{t_k} D^* g_s d W_s^N (1)  ))$$
to the associated limit 
$$ (( R^* W_{t_1}, \int_0^{t_1} D^* g_s d W_s (1) ), \ldots, ( R^* W_{t_k}, \int_0^{t_k} D^* g_s d W_s (1)  ))$$
follows, and this concludes the proof.
\end{proof}

We close this section with the following partial result. 

\begin{theorem}\label{theo:char}
Under Assumptions \ref{ass:1} and \ref{ass:2} and for any $ p > \alpha + \frac12 , $  any limit $ ( \bar U, \bar \eta ) $ of $ (U^N, \eta^N)$ is solution in $ D ( \R_+, \R^n ) \times C ( \R_+, \W_0^{- 4, p } ) $ of the following system of stochastic differential equations  
\begin{equation}\label{eq:baruoncemore}
 \bar U^i_t = - \alpha  \int_0^t \bar U^i_s ds + h \int_0^t \bar \eta_s (f) ds   -\int_{[0, t] \times \r_+} \bar U^i_{s-} \indiq_{\{ z \le f ( \bar X^{i }_{s-} ) \}} \pi^i ( ds, dz) + h W_t ( 1) , t \geq 0,  \; 1 \le i \le n, 
\end{equation}
and for any $ \varphi  \in \W_0^{5, p }, $ 
\begin{equation}\label{eq:baretabis}
\bar  \eta_t ( \varphi) = \bar \eta_0 (\varphi) + \int_0^t \bar \eta_s ( L_s \varphi ) ds +h \int_0^t g_s ( \varphi ') \bar \eta_s ( f) ds + W_t( R \varphi )+h  \int_0^t  g_s ( \varphi ')  d W_s ( 1) , t \geq 0. 
\end{equation}
\end{theorem}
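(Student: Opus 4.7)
The plan is to combine the tightness results of Theorems \ref{theo:tight} and \ref{theo:tightU} with the joint convergence of the martingale drivers established in the preceding propositions. Since $(U^N)_N$ is tight in $D(\R_+,\R^n)$ and $(\eta^N, R^* W^N, \int_0^\cdot D^*\mu^N_{s-}\, dW^N_s(1))_N$ is tight in $D(\R_+,(\W_0^{-4,p})^3)$, one may extract a common subsequence along which all these processes converge jointly in law; by Skorokhod's representation theorem we may assume the convergence is almost sure on a suitable probability space. Because the limits $W$, $\int_0^\cdot D^*g_s\,dW_s(1)$ and $\bar\eta$ all have continuous paths (the latter by the preceding proposition about the supports of limit laws), the convergence of those components is actually uniform on compact time intervals, while $U^N\to\bar U$ holds in the Skorokhod topology on $D(\R_+,\R^n)$. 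The rest of the proof consists of passing to the limit, term by term, in the decompositions \eqref{eq:12} and \eqref{eq:utn}.

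For the equation \eqref{eq:baretabis}, fix $\varphi\in\W_0^{5,p}$ so that $L_s\varphi\in\W_0^{4,p}$ by Lemma \ref{prop:5}. The initial term $\eta^N_0(\varphi)\to\bar\eta_0(\varphi)$ follows from the classical CLT for the i.i.d.\ sequence $(X^i_0)$. The drift term $\int_0^t\eta^N_s(L_s\varphi)\,ds\to\int_0^t\bar\eta_s(L_s\varphi)\,ds$ follows from the almost sure uniform convergence of $s\mapsto\eta^N_s$ in $\W_0^{-4,p}$ on $[0,t]$ combined with the uniform integrability ensured by \eqref{eq:uniforminTbound} and the continuity of $s\mapsto L_s\varphi$ into $\W_0^{4,p}$. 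The martingale terms $W^N_t(R\varphi)\to W_t(R\varphi)$ and $h\int_0^t \mu^N_{s-}(\varphi')\,dW^N_s(1)\to h\int_0^t g_s(\varphi')\,dW_s(1)$ come from the joint convergence result. Finally, the remainder $R^N_t(\varphi)\to 0$ since by Proposition \ref{prop:10}, $\sqrt{N}\,\|R^N_t\|_{-3,p}$ remains bounded.

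For \eqref{eq:baruoncemore} I work with the decomposition \eqref{eq:utn} established in the proof of Theorem \ref{theo:tightU}. The terms $-\alpha\int_0^t U^{N,i}_s\,ds$ and $hW^N_t(1)$ and $h\int_0^t\eta^N_s(f)\,ds$ pass to the limit by the continuous mapping theorem combined with the uniform integrability bound \eqref{eq:etatNf} and the fact that $|\eta^N_s(f)|\le\|\eta^N_s\|_{-4,p}\|f\|_{4,p}$ with $f\in\W_0^{4,p}$ under the condition $p>\alpha+\frac12$. The jump integral $\int_{[0,t]\times\R_+}U^{N,i}_{s-}\indiq_{\{z\le f(\bar X^i_{s-})\}}\pi^i(ds,dz)$ is driven by the same Poisson measure $\pi^i$ on both sides and has intensity $f(\bar X^i_{s-})$ which does not depend on $N$, so it passes to the limit by dominated convergence once one verifies $U^{N,i}\to\bar U^i$ at the (countably many, $\pi^i$-a.s.\ fixed) jump times of $\bar Z^i$. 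The residual $\bar R^{N,i}_t$ tends to zero in probability uniformly on $[0,T]$, as shown within the proof of Theorem \ref{theo:tightU} (essentially using \eqref{eq:tv}).

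The main obstacle is the bilinear term $h\int_0^t\eta^N_s(f)\,\mu^N_s(\varphi')\,ds$ in \eqref{eq:12}, where one factor is the genuine fluctuation $\eta^N_s$ (only converging distributionally in $\W_0^{-4,p}$) and the other is the empirical measure $\mu^N_s$ (converging in probability to the deterministic $g_s$). I would decompose it as
\begin{equation*}
\int_0^t\eta^N_s(f)\,\mu^N_s(\varphi')\,ds=\int_0^t\eta^N_s(f)\,g_s(\varphi')\,ds+\frac{1}{\sqrt N}\int_0^t\eta^N_s(f)\,\eta^N_s(\varphi')\,ds.
\end{equation*}
The cross term is $O(N^{-1/2})$ thanks to \eqref{eq:etatNf} and an analogous bound for $\eta^N_s(\varphi')$, hence vanishes in the limit, while the first term converges to $\int_0^t\bar\eta_s(f)\,g_s(\varphi')\,ds$ because $s\mapsto g_s(\varphi')$ is deterministic, bounded and continuous by Proposition \ref{prop:proprietegt}, and $\eta^N_s(f)\to\bar\eta_s(f)$ uniformly on $[0,t]$ with uniform integrability. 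The analogous care for $\int_0^t\mu^N_{s-}(\varphi')\,dW^N_s(1)$ has already been dealt with in the joint convergence result by replacing $\mu^N_{s-}$ by $g_s$ modulo a negligible martingale. Finally, the identity \eqref{eq:baretabis} thus obtained for all $\varphi$ in a countable dense subset of $\W_0^{5,p}$ extends to the whole space by the continuity of both sides in $\varphi$.
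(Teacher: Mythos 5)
Your overall plan (tightness $\to$ subsequential limits $\to$ identification by passing to the limit in \eqref{eq:12} and \eqref{eq:utn}) is sound and mirrors the paper's structure, and the treatment of the bilinear term $\int_0^t\eta^N_s(f)\mu^N_s(\varphi')\,ds$ by splitting off the $O(N^{-1/2})$ cross term is exactly what the paper does (absorbing it into $\bar R^N$). However, there is one genuine gap, which is in fact the only real difficulty in identifying the $\bar U^i$ equation, and you gloss over it.

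The problematic step is the jump integral. You claim that $\int_{[0,t]\times\R_+}U^{N,i}_{s-}\indiq_{\{z\le f(\bar X^i_{s-})\}}\pi^i(ds,dz)$ "passes to the limit by dominated convergence once one verifies $U^{N,i}\to\bar U^i$ at the jump times of $\bar Z^i$". This does not work on two counts. First, after Skorokhod representation \emph{all} random objects in the tuple receive new realizations on the new probability space; in particular $\pi^i$ and $\bar X^i$ are no longer fixed, and there is a priori no Poisson random measure on the new space that simultaneously drives $\bar X^i$ and realizes the limit of the jump integrals. Second, even ignoring this, the limit $\bar U^i$ has a genuine jump at every jump time $T_n$ of $\bar Z^i$ (the reset term in \eqref{eq:baruoncemore}), so the $T_n$ are discontinuity points of $\bar U^i$; Skorokhod convergence $U^{N,i}\to\bar U^i$ gives convergence only at continuity points of the limit and certainly does not give $U^{N,i}_{T_n-}\to\bar U^i_{T_n-}$ without an extra argument matching up the jump times of the approximants with those of the limit. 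This is precisely what makes the identification non-trivial. The paper resolves it by adding the pair $(V^{N,i},\bar X^i)$ to the convergent tuple, computing the semimartingale characteristics of $(V^{N,i},\bar X^i)$, showing convergence of characteristics, invoking the Jacod--Shiryaev theorem on convergence of triplets (Thm.~2.4, p.~528) to identify $(\bar V^i,\bar X^i)$ as a semimartingale with the expected characteristics, and then using the semimartingale representation theorem (Thm.~III.2.26, p.~157) to \emph{construct} a Poisson random measure on the limit space with respect to which $\bar V^i=\int\bar U^i_{s-}\indiq_{\{z\le f(\bar X^i_{s-})\}}\pi^i(ds,dz)$. Without an argument of this type, the identity \eqref{eq:identify} and hence \eqref{eq:baruoncemore} is not established.

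A smaller point: when you justify that the cross term $N^{-1/2}\int_0^t\eta^N_s(f)\eta^N_s(\varphi')\,ds$ vanishes you should phrase it as convergence in probability; the available moment bounds \eqref{eq:uniforminTboundeta} are only first-order (cf.\ Remark~\ref{rem:2}), so a Cauchy--Schwarz-style second-moment estimate is not at your disposal. Likewise the claim that $L_s\varphi\in\W_0^{4,p}$ for $\varphi\in\W_0^{5,p}$ is imprecise: $L_s$ maps $\W_0^{5,p}$ into $\W_0^{4,p+\alpha}$, which is a \emph{larger} space, so the dual pairing with $\eta^N_s\in\W_0^{-4,p}$ is not immediate; the paper sidesteps this by working with a $C_c^\infty$ orthonormal basis $(\psi_k)$ of $\W_0^{4,p}$, for which $L_s\psi_k\in C^{3,\alpha}\subset\W_0^{3,p}\subset\W_0^{4,p}$ is automatic.
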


\begin{remark}
We have stated the decomposition \eqref{eq:12} in $ \W_0^{-4, p }.$ However, the operator $ L_s$ appearing in \eqref{eq:baretabis} above reduces regularity by one, such that test functions $ \varphi  \in \W_0^{4, p } $ are reduced to test functions in $ \W_0^{3, p+ \alpha}.$ Yet, we have proven tightness of $ (\eta^N)_N$ only in $ \W_0^{-4,  p}, $ such that we have to state the above decomposition in $\W_0^{-5, p },$ although the limit process $ \bar \eta $ takes values in the smaller space $ \W_0^{4, p }.$ This is analogous to Remark  5.7 in \cite{chevallier}. 
\end{remark}

\begin{proof}
{\bf Step 1.} We have already proven the tightness of the sequence of laws of $ U^N.$  
We now consider the c\`adl\`ag process 
$${\mathcal Y}^N :=  (U^{N,i}, \int_{[0, \cdot ] \times \R_+}  U^{N,i }_{s- }  \indiq_{\{ z \le f ( \bar X^i_{s-} ) \}} \pi^i ( ds, dz ), \bar X^i )_{1 \le i \le n }  $$ 
which belongs to $  D ( \R_+, \R^{3n}).$ 

Using analogous arguments as those in the proof of Theorem \ref{theo:tightU} allows to deduce the tightness of the sequence of processes $ {\mathcal Y}^N $ in $ D( \R_+, \R^{3n}) . $ Details of the proof are omitted.

{\bf Step 2.} Due to the previous step and the continuity of any limit law of $ (\eta^N,  W^N, \int_0^{\cdot } D^* g_s d W^N_s(1) ),$  we know that
$$   \left( \left(U^{N,i}, \int_{[0, \cdot ] \times \R_+}  U^{N,i }_{s- }  \indiq_{\{ z \le f ( \bar X^i_{s-} ) \}} \pi^i ( ds, dz ), \bar X^i \right)_{1 \le i \le n } , \eta^N, W^N, \int_0^{\cdot } D^* g_s d W^N_s(1) \right) $$
is tight in 
$$ D ( \R_+, \R^{3n}  \times \W_0^{- 4, p }\times \W_0^{-4, p } \times \W_0^{-4, p }).$$ In what follows we assume without loss of generality that the above sequence converges to some limit 
$$ \left( (\bar U^i, \bar V^i, \bar X^i)_{1 \le i \le n}, \bar \eta, W, \int_0^\cdot D^* g_s d W_s (1) \right),$$
where, to simplify notation, we use the same letter $ \bar X^i$ to denote the limit process as well as the one defining the third coordinates of $ {\mathcal Y}^N.$

To identify the limit, let $(\psi_k)_k$ be an orthonormal basis of $ \W_0^{4, p },$ composed of $ C^\infty_c-$functions. Define for any $k$ the functional $F_k : D ( \R_+,  \W_0^{- 4, p }\times \W_0^{-4, p } \times \W_0^{-4, p }) \to D ( \R_+, \R ) $ by 
\begin{multline*}
 F_k ( f^1, f^2, f^3 )_t = <f^1_t, \psi_k> - <f^1_0, \psi_k> - \int_0^t <f^1_s, L_s \psi_k> ds -  h \int_0^t <f_s^1, f> < g_s, \psi_k'> ds\\
  -  <f^2_t, R \psi_k > -  h <f_t^3, \psi_k > 
\end{multline*}
and 
$ G : D ( \R_+, \W_0^{- 4, p } \times \W_0^{ - 4 , p }  \times \R ) \to D ( \R_+ , \R ) $ by 
$$G ( f^1, f^2, g^1)_t = g^1_t - g^1_0 +\alpha \int_0^t g^1_s ds - h \int_0^t <f^1_s ,f> ds  - h <f^2_t ,1> .$$
Then the system \eqref{eq:baruoncemore}--\eqref{eq:baretabis} is equivalent to 
$$ \mbox{For all } k \geq 1, \; F_k ( \bar \eta, W, h \int_0^\cdot D^* g_s d W_s ( 1) ) = 0 , \; G ( \bar U^i, \bar \eta, W) =  -\int_{[0, \cdot] \times \r_+} \bar U^i_{s-} \indiq_{\{ z \le f ( \bar X^{i }_{s-} ) \}} \pi^i ( ds, dz)  ,$$
for all $ 1 \le i \le n.$

{\bf Step 2.1.} In this step we show that $F_k ( \bar \eta, W, h \int_0^\cdot D^* g_s d W_s ( 1) ) = 0.$ We first prove that $ F_k$ is continuous at every point 
$ ( f^1, f^2, f^3 ) \in C ( \R_+,  \W_0^{- 4, p }\times \W_0^{-4, p } \times \W_0^{-4, p }) .$ Indeed, the continuity of 
$$ f^1 \mapsto \left( t \mapsto <f^1_t, \psi_k> - <f^1_0, \psi_k> - \int_0^t <f^1_s, L_s \psi_k> ds -   h \int_0^t <f_s^1 ,f> < g_s, \psi_k'> ds\right) $$
at every point $f^1 \in C ( \R_+, \W_0^{- 4, p } )$ follows as in the proof of Theorem 5.6 of \cite{chevallier}.
Similarly, the functionals
$$ f^2 \mapsto \left( t \mapsto  <f^2_t, R \psi_k >\right) \mbox{ and } f^3 \mapsto \left( t \mapsto < f^3_t, \psi_k > \right) $$
are continuous as well at every point $ f^2$ and $ f^3  $ belonging to $  C ( \R_+, \W_0^{- 4, p } ) .$ 

{\bf Step 2.2.} Before proceeding further, we rewrite \eqref{eq:12} as follows
\begin{equation}\label{eq:12bis}
\eta_t^N = \eta_0^N + \int_0^t L_s^* \eta_s^N ds + R^* W_t^N + h \int_0^t D^* g_{s}  d W_s^N(1) + h \int_0^t  \eta_s^N ( f) D^* g_s   ds +\bar R_t^{N } ,
\end{equation}
where, recalling \eqref{eq:en}, 
$\bar R_t^{N }  = R_t^N + h E^N + h \int_0^{\cdot}  \eta_s^N ( f) D^* ( \mu_s^N - g_s)    ds $ has already been shown to converge to $0$ in $ \W_0^{ - 4, p };$ that is, 
$$ \lim_{ N \to \infty} \E \sup_{ t \le T } \| \bar R_t^{N } \|_{ - 4, p } = 0.$$

Writing for short  $M^N = \int_0^\cdot D^* g_s d W^N_s (1) , $  \eqref{eq:12bis} implies that for all $t \geq 0, $ 
$$ F_k ( \eta^N, W^N, M^N ) (t)  = \bar R_t^{N} ( \psi_k) \; \mbox{ and }\; 
 \E ( \sup_{ t \le T } | \bar R_t^{N}|( \psi_k) |^2 )   \to 0$$
as $ N \to \infty.$ 
Therefore, we have convergence in probability,
$$ \sup_{t \le T } | F_k ( \eta^N, W^N, M^N ) (t) | \to 0 .$$
On the other hand, by the continuous mapping theorem, we have convergence in law $ F_k ( \eta^N, W^N, M^N )  \to F_k( \bar \eta, W, \int_0^\cdot D^* g_s d W_s ( 1) ) ;$ this allows to identify the limit which has to equal the zero-process.  

{\bf Step 2.3.} We now turn to the study of $G.$ Firstly, $ G$ is continuous at every point $( f^1, f^2, g^1) \in C( \R_+ , \W_0^{-4, p } \times \W_0^{-4, p } ) \times D ( \r_+, \R) .$ Indeed we only need to check the continuity of  
$$ g^1 \mapsto \left( t \mapsto \int_0^t g^1_s ds \right) $$
at every point $g^1 \in D ( \R_+, \R ) ,$ which follows from the basic properties of the Skorokhod topology. From \eqref{eq:utn} we have that 
$$ G ( \eta^N, W^N, U^{N, i }) = \bar R^{N, i } - \int_{ [0, \cdot] \times \r_+} U^{N, i }_{s-} \indiq_{\{ z \le f ( \bar X^i_{s-}) \}} \pi^i (ds, dz ) ,$$
and by \eqref{eq:lastpoint},   $\bar R^{N, i } $ converges to $ 0$ in probability, for the uniform convergence on finite time intervals, for any fixed $i.$ This implies that 
$$ G ( \bar \eta , W, \bar U^i ) = - \bar V^i.$$ 

It remains to identify 
\begin{equation}\label{eq:identify}
 \bar V^i =  \int_{[0, \cdot ] \times \r_+} \bar U^i_{s-} \indiq_{\{ z \le f ( \bar X^{i }_{s-} ) \}} \pi^i ( ds, dz) , 
\end{equation} 
for each $ 1 \le i \le n.$ 
 
In what follows, we write for short $ V^{N, i } = \int_{[0, \cdot ] \times \r_+}  U^{N,i }_{s-} \indiq_{\{ z \le f ( \bar X^{i }_{s-} ) \}} \pi^i ( ds, dz) .$ 
We already know that $ (U^{N, i }, V^{N, i } , \bar X^i )_{1 \le i \le n }  $ converges in law to $ (\bar U^i, \bar V^i, \bar X^i )_{1 \le i \le n } ,$ where once more, by abuse of notation, we use the same letter $ \bar X^i $ for the limit process of 
the third coordinate. Moreover, $ ( V^{N, i }, \bar X^i )_{1 \le i \le n }  $ is a semimartingale taking values in $\r^{2n}$ with characteristics 
$$ B^{N, i } = \left( \int_0^{\cdot } U^{N, i }_s f ( \bar X^i_s) ds , \int_0^\cdot [ - \alpha \bar X^i_s - \bar X^i_s f( \bar X^i_s) + g_s ( f) ] ds \right) , 1 \le i \le n , $$
$$ C^N = 0 , \nu^N (dt, dv, dx)  = \sum_{i=1}^n  f ( \bar X^i_{t-}) dt \left( \prod_{j=1, j \neq i}^n \delta_{(0, 0)} (d v^j , dx^j) \otimes \delta_{ (  - U^{N, i }_{t-} , - \bar X^i_{t-} ) }(dv^i,dx^i)\right) .$$ 
Clearly we have weak convergence $ ( V^{N, i }, \bar X^i ,  B^{N,i})_{1 \le i \le n}  \to ( \bar V^i, \bar X^i,  \bar B^i)_{1 \le i \le n } $ where 
$$ \bar B^i  =  \left( \int_0^{\cdot } \bar U^i_s f ( \bar X^i_s) ds , \int_0^\cdot [ - \alpha \bar X^i_s - \bar X^i_s f( \bar X^i_s) + g_s ( f) ] ds \right) , $$
by the continuity properties of the Skorokhod topology and since $ ( U^{N, i }, \bar X^i )_{1 \le i \le n} \to ( \bar U^i, \bar X^i)_{1 \le i \le n}.$

It is shown analogously that we have weak convergence $ g * \nu^N \to g * \bar \nu  ,$ for any continuous and bounded test function $g,$ where 
$$ \bar \nu = \sum_{i=1}^n  f ( \bar X^i_{t-}) dt \left( \prod_{j=1, j \neq i}^n \delta_{(0, 0)} (d v^j , dx^j) \otimes \delta_{ (  - \bar U^{ i }_{t-} , - \bar X^i_{t-} ) }(dv^i,dx^i)\right)  .$$ 
Then Jacod and Shiryaev \cite[Theorem 2.4 page 528]{js} implies that necessarily $ (\bar V^i , \bar X^i )_{1 \le i \le n} $ is a semimartingale with characteristics $ ( \bar B, 0, \bar \nu ) .$ Finally, the representation theorem \cite[Theorem III.2.26 page 157]{js} implies that there exist $n$ independent  Poisson random measures which, by abuse of notation, we still denote $ \pi^i (ds, dz ) ,$ having Lebesgue intensity, such that 
$$ \bar X^i_t = \bar X^i_0 - \alpha \int_0^t \bar X^i_s ds + \int_0^t h g_s ( f) ds  -\int_{[0, \cdot ] \times \r_+} \bar X^i_{s-} \indiq_{\{ z \le f ( \bar X^{i }_{s-} ) \}} \pi^i ( ds, dz)$$
and 
$$ \bar V^i_t = \int_{[0, \cdot ] \times \r_+} \bar U^i_{s-} \indiq_{\{ z \le f ( \bar X^{i }_{s-} ) \}} \pi^i ( ds, dz) .$$
This gives the desired identity \eqref{eq:identify} and thus finishes our proof.
\end{proof}

We close this section with the 
\begin{proof}[Proof of Theorem \ref{theo:main}]
Theorem \ref{theo:tight} implies the tightness of $ (\eta^N) $ and Theorem \ref{theo:tightU} the tightness of $ (U^N).$ Moreover, Theorem \ref{theo:char} implies that any limit $ (\bar \eta, \bar U) $ of $ (\eta^N, U^N) $ is solution of the system of differential equations \eqref{eq:baruoncemore}--\eqref{eq:baretabis}. Finally,  Theorem \ref{theo:uniqueness} implies pathwise uniqueness for this limit system, and the Yamada-Watanabe theorem allows to deduce weak uniqueness and thus the uniqueness of the limit law implying the weak convergence.
\end{proof}

\section{Appendix}
\subsection{Useful results on weighted Sobolev spaces}\label{sec:sobolev}
In what follows we collect the most important facts about weighted Sobolev spaces that are stated in Section 2.1 of \cite{fernandez}. First of all, obviously, for all $ k \le k', $ 
\begin{equation}\label{eq:goodtoknow}
 \| \cdot \|_{k, p } \le \| \cdot \|_{k', p }, \mbox{ implying that }  \| \cdot \|_{-k', p } \le \| \cdot \|_{-k, p }.
\end{equation} 
We also have that for all $ p \le p' , $ 
$$ \| \cdot \|_{k, p'} \le C \| \cdot \|_{k, p }, \mbox{ implying that }  \| \cdot \|_{-k , p } \le C \| \cdot \|_{-k, p' }. $$ 
Finally, $ C^{k, \alpha } \subset \W_0^{k, p } $ for any $ p > \alpha + \frac12.$

The following embeddings have been used throughout this paper. 
\begin{enumerate}
\item
{\bf Sobolev embedding}. There exists a constant $C$ such that for all $ m \geq 1, k \geq 0 $ and $ p \geq 0, $ 
$$ \| \psi\|_{C^{ k, p }} \le C \| \psi \|_{m+k, p }.$$
\item
{\bf Maurin's theorem}. The embedding $ \W_0^{m+k , p } \hookrightarrow  \W_0^{k , p + p'} $ is of Hilbert-Schmidt type for any $ m \geq 1, k \geq 0$ and $ p \geq 0, p' >1/2.$ This implies that the embedding is compact and that there exists a constant $C$ such that 
$$ \| \psi\|_{ k, p + p' } \le C \| \psi \|_{ k + m, p }.$$ 
\item
The dual embedding $  \W_0^{- k , p + p'  } \hookrightarrow  \W_0^{- (k+m) , p} $ is of Hilbert-Schmidt type. 
\end{enumerate}
We have also used several times that for any $ k, p \geq 0,$ there exists an orthonormal basis composed of $ C^\infty_c-$ functions $ (\psi_i)_i $ of $ \W_0^{ k, p } $ such that for any element $ w \in \W_0^{- k, p }, $ 
$$ \| w\|_{-k, p }^2 = \sum_i < w , \psi_i>^2.$$

\subsection{Proof of Lemma \ref{lemma:apriori}}
A straightforward adaptation of \cite[Prop.15]{evafournier} yields 
\begin{proposition}
Grant Assumption \ref{ass:1}. Then 
for all $t\geq 0$, all $i=1,\dots,N$,
\begin{eqnarray}
&& X^{N,i}_t \leq  X^{N,i}_0 +  3\bar X^N_0 + 4 h N^N_t, \label{tt1}\\
&&\frac{1}{N} \sum_{j = 1 }^N 
\int_0^t \int_0^\infty (h+X^{N,j}_{s-}) \indiq_{\{ z \le f ( X^{ N, j }_{s-}) \}} \pi^j (ds, dz )
\le 3 \bar X^N_0 + 4 h N_t^N, \label{tt2}
\end{eqnarray}
where $N^N_t:= N^{-1} \sum_{j = 1 }^N \int_{[0, t ] \times \r_+}  
\indiq_{\{ z \le f(2h) \}} \pi^j ( ds, dz ).$
\end{proposition}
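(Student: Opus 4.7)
The strategy is to bound the aggregate number of spikes and the aggregate reset mass simultaneously, exploiting that big potentials necessarily cost a lot when they fire.

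\textbf{Step 1: Pathwise monotone bound on individual potentials.} Since the only positive jumps in \eqref{eq:dyn} come from spikes of the other neurons and the drift and self-reset are non-positive, dropping those non-positive contributions gives
$$X^{N,i}_t \le X^{N,i}_0 + \frac{h}{N}\sum_{j\neq i} Z^{N,j}_t \le X^{N,i}_0 + h S_t,\qquad S_t := \frac1N\sum_{j=1}^N Z^{N,j}_t,$$
with $Z^{N,j}$ as in \eqref{eq:ztn}. Thus \eqref{tt1} will follow from an upper bound on $hS_t$.

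\textbf{Step 2: Mass balance for the mean potential.} Writing $\bar X^N_t = N^{-1}\sum_i X^{N,i}_t$ and averaging \eqref{eq:dyn}, we obtain the identity
$$\bar X^N_t + \alpha\int_0^t \bar X^N_s\,ds + \Delta_t = \bar X^N_0 + h\frac{N-1}{N} S_t, \qquad \Delta_t := \frac1N\sum_{i=1}^N\int_{[0,t]\times\R_+} X^{N,i}_{s-}\indiq_{\{z\le f(X^{N,i}_{s-})\}}\pi^i(ds,dz).$$
Since $\bar X^N_t\ge 0$ and the drift integral is non-negative, this yields the key bookkeeping inequality $\Delta_t \le \bar X^N_0 + h S_t$, and therefore
$$M_t = hS_t + \Delta_t \le \bar X^N_0 + 2hS_t.$$

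\textbf{Step 3: Splitting spikes at the threshold $2h$.} This is the main obstacle and where all the work is. Write $S_t = S^{\le}_t + S^{>}_t$ according to whether $X^{N,j}_{s-}\le 2h$ or $>2h$. On $\{X^{N,j}_{s-}\le 2h\}$ we have $f(X^{N,j}_{s-})\le f(2h)$ because $f$ is increasing, so a direct pathwise domination of the indicators gives
$$S^{\le}_t \le \frac1N\sum_{j=1}^N\int_{[0,t]\times\R_+}\indiq_{\{z\le f(2h)\}}\pi^j(ds,dz) = N^N_t.$$
On $\{X^{N,j}_{s-}>2h\}$, each spike removes at least $2h$ from the neuron, hence $2h S^{>}_t \le \Delta_t \le \bar X^N_0 + hS_t$, so $S^{>}_t \le S_t/2 + \bar X^N_0/(2h)$. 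Combining gives $S_t/2 \le N^N_t + \bar X^N_0/(2h)$, i.e.
$$h S_t \le 2h N^N_t + \bar X^N_0.$$

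\textbf{Step 4: Conclusion.} Plugging the bound $hS_t\le 2h N^N_t+\bar X^N_0$ into Step 1 yields $X^{N,i}_t\le X^{N,i}_0+\bar X^N_0+2hN^N_t$, which is stronger than \eqref{tt1}. Plugging the same bound into the inequality $M_t\le \bar X^N_0+2hS_t$ from Step 2 gives $M_t\le 3\bar X^N_0+4hN^N_t$, which is exactly \eqref{tt2}. The hardest part is the two-sided use of the threshold $2h$ in Step 3: one needs simultaneously that the jump \emph{rate} is controlled on $\{X^{N,j}_{s-}\le 2h\}$ by monotonicity of $f$, and that the jump \emph{size} dominates $2h$ on the complementary set, so that the mass-balance identity of Step 2 can be fed back to close the bootstrap for $S_t$.
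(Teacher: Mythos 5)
Your proof is correct, and its two key ingredients are exactly those of the paper's own argument: (i) the mass-balance inequality obtained by averaging \eqref{eq:dyn} and discarding the nonnegative terms $\bar X^N_t$ and $\alpha\int_0^t\bar X^N_s\,ds$, and (ii) the split of spiking events at the threshold $2h$, using monotonicity of $f$ on $\{X^{N,j}_{s-}\le 2h\}$ and the dominance of the reset over the injected increment on $\{X^{N,j}_{s-}>2h\}$. The difference is in the packaging. The paper condenses step (ii) into the single pointwise inequality $x-h\ge (x+h)/3 - \tfrac{4}{3}h\,\indiq_{\{x\le 2h\}}$ (valid for all $x\ge 0$), applies it to $x=X^{N,i}_{s-}$ inside the integral of the mass balance, and reads off \eqref{tt2} directly; \eqref{tt1} is then deduced from \eqref{tt2}. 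You instead run a two-step bootstrap: you first bound $\Delta_t\le \bar X^N_0 + hS_t$, then bound $S_t^>$ by $\Delta_t/(2h)$ and $S_t^{\le}$ by $N^N_t$, and finally subtract $S_t/2$ from both sides to close the estimate. This last subtraction requires knowing a priori that $S_t<\infty$ almost surely (which holds because the $N$-particle system has a.s.\ finitely many jumps on $[0,t]$, as guaranteed by the cited existence results), a small point the paper's one-shot inequality quietly avoids. In exchange your route yields the slightly sharper bound $X^{N,i}_t\le X^{N,i}_0 + \bar X^N_0 + 2hN^N_t$ for \eqref{tt1}, since you feed $hS_t$ directly into the elementary bound from Step 1 rather than deducing it from \eqref{tt2}.
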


\begin{proof} For the convenience of the reader we briefly sketch how to adapt the proof of \cite{evafournier} to the present frame. Taking the (empirical) mean of
\eqref{eq:dyn}, we find
\begin{equation}\label{eq:16}
 \bar X_t^N \le  \bar X_0^N + \frac{1}{N} \sum_{ i = 1 }^N \int_{[0, t ] \times \r_+}
\Big( h \frac{N-1}{N} - X^{N, i }_{s- } \Big) \indiq_{\{ z \le f ( X^{ N, i }_{s-}) \}} 
\pi^i (ds, dz ) 
\end{equation}
which implies
$$
\frac{1}{N} \sum_{ i = 1 }^N\int_{[0, t ] \times \r_+} ( X^{N, i }_{s- } - h  ) 
\indiq_{\{ z \le f ( X^{ N, i }_{s-}) \}} \pi^i (ds, dz ) \le \bar X_0^N .
$$
Using that $x-h\geq (x+h)/3 - (4/3)h \indiq_{ \{x\le 2h \} }$ for all $x \geq 0$ and that $f$ is non-decreasing,
we deduce that 
\begin{eqnarray*}
&&\frac{1}{N} \sum_{ i = 1 }^N\int_{[0, t ] \times \r_+}  
(h+X^{N,i}_{s-}) \indiq_{\{ z \le f ( X^{ N, i }_{s-}) \}} \pi^i (ds, dz )  \\
&\le&  3 \bar X_0^N 
+  \frac{4h }{N} \sum_{ i = 1 }^N \int_{[0, t ] \times \r_+}  \indiq_{ \{ X^{N, i }_{s- }  \le  2 h \} }   
\indiq_{\{ z \le f ( X^{ N, i }_{s-}) \}} \pi^i (ds, dz )\\
&\le &3 \bar X_0^N 
+  \frac{4h }{N} \sum_{ i = 1 }^N \int_{[0, t ] \times \r_+}
\indiq_{\{ z \le f ( 2h ) \}} \pi^i (ds, dz ).
\end{eqnarray*}
Now, for all $ 1 \le i \le N$, starting from \eqref{eq:dyn},
$$
 X_t^{N, i } \le X_0^{N, i } + \frac{h}{N} \sum_{ j = 1 }^N 
\int_{[0, t ] \times \r_+} \indiq_{ \{ z \le f ( X_{s-}^{N, j } ) \}} \pi^j ( ds, dz ) 
\le X_0^{N, i } 
+ 3 \bar X^N_0 + 4 h  N_t^N,
$$
which concludes. 
\end{proof}

The proof of \eqref{eq:aprioribound1} then follows from the fact that $ N_t^N= U/ N $ where $ U \sim Poiss ( N t f ( 2h ) ) $ and that $ g_0$ is of compact support. \eqref{eq:aprioribound2} is Proposition 14 of \cite{evafournier}. This finishes the proof of Lemma \ref{lemma:apriori}. $\qed$

\subsection{Useful properties of the limit process} 

\begin{lemma}\label{prop:lemma20}
For any $ p \geq 0, $ $g_t$ is continuous in $ \W_0^{-2, p },$ and for all $ t, t+h \le T, $ we have $\| g_{t+h } - g_t \|_{-2, p} \le C_T h.$  
\end{lemma}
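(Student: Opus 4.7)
The plan is to exploit the mild form of the Fokker--Planck equation satisfied by $g_t$, combined with the a priori compact support property of $g_s$ stated in Proposition \ref{prop:proprietegt}, to obtain a pointwise Lipschitz estimate for $t \mapsto g_t(\psi)$ that is uniform over $\psi$ in the unit ball of $\W_0^{2,p}$.

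First, I would start from the identity
\begin{equation*}
g_t(\psi) = g_0(\psi) + \int_0^t g_s(L_s \psi)\, ds,
\end{equation*}
valid for any $\psi \in C^\infty_c$, which follows by applying It\^o's formula to $\psi(\bar X^1_s)$ and taking expectation (the martingale part integrated against $\tilde\pi^1$ has zero mean thanks to the a priori bound \eqref{eq:aprioribound2}, which makes $f(\bar X^1_s)$ bounded on $[0,T]$). By density of $C^\infty_c$ in $\W_0^{2,p}$ and the continuity of $L_s : \W_0^{2,p} \to \W_0^{1,p+\alpha}$ given by Lemma \ref{prop:5}, this identity extends to every $\psi \in \W_0^{2,p}$, so that
\begin{equation*}
g_{t+h}(\psi) - g_t(\psi) = \int_t^{t+h} g_s(L_s \psi)\, ds.
\end{equation*}

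The key step is then to bound $|g_s(L_s\psi)|$ uniformly in $s \le T$ by a constant times $\|\psi\|_{2,p}$. Here I use that by Proposition \ref{prop:proprietegt}, $g_s$ is supported in $[0, \bar C_T]$ for all $s \le T$. Recalling
\begin{equation*}
L_s\psi(x) = -\alpha x \psi'(x) + h g_s(f)\psi'(x) + f(x)\bigl(\psi(0) - \psi(x)\bigr),
\end{equation*}
the Sobolev embedding $\W_0^{2,p} \hookrightarrow C^{1,p}$ yields $\sup_{x \in [0,\bar C_T]} \bigl(|\psi(x)| + |\psi'(x)|\bigr) \le C_T \|\psi\|_{2,p}$, and $f$ is bounded on $[0,\bar C_T]$ by $f(\bar C_T)$. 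Since $g_s(f) \le f(\bar C_T)$ as well, we obtain
\begin{equation*}
|g_s(L_s\psi)| \le \int_0^{\bar C_T} |L_s\psi(x)|\, g_s(dx) \le C_T \|\psi\|_{2,p},
\end{equation*}
where the constant $C_T$ depends only on $\bar C_T$, $f(\bar C_T)$, $\alpha$ and $h$, but not on $s \le T$ nor on $\psi$.

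Plugging this bound into the integral representation gives $|g_{t+h}(\psi) - g_t(\psi)| \le C_T h \|\psi\|_{2,p}$, and taking the supremum over $\psi$ with $\|\psi\|_{2,p} \le 1$ yields $\|g_{t+h} - g_t\|_{-2,p} \le C_T h$, whence the continuity of $t \mapsto g_t$ in $\W_0^{-2,p}$. The only mildly delicate point is the justification of the mild equation for $g_t$ over the full space $\W_0^{2,p}$ rather than just $C^\infty_c$, but this is routine once one observes that both sides of the identity are continuous in $\psi$ with respect to $\|\cdot\|_{2,p}$, thanks to the compact support of $g_s$ and the continuity of $L_s$ given by Lemma \ref{prop:5}.
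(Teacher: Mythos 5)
Your proof is correct and follows essentially the same approach as the paper's: both start from the identity $g_{t+h}(\psi) - g_t(\psi) = \int_t^{t+h} g_s(L_s\psi)\,ds$ and bound the integrand by $C_T\|\psi\|_{2,p}$, exploiting that $\bar X^1_s$ (equivalently, the support of $g_s$) is confined to $[0,\bar C_T]$ together with the Sobolev embedding. The paper passes through the bound $\|L_s\psi\|_{1,p+\alpha} \le C\|\psi\|_{2,p}$ from Lemma~\ref{prop:5} before applying the embedding, whereas you bound $|L_s\psi(x)|$ on $[0,\bar C_T]$ directly term by term; this is a cosmetic difference, and your extra care about extending the mild equation from $C^\infty_c$ to $\W_0^{2,p}$ is a harmless bonus.
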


\begin{proof}
We have for all $ \psi \in \W_0^{2, p }, $ 
$$ g_{t+h } (\psi ) - g_t ( \psi ) = \E [ \psi ( \bar X^1_{t+h}) - \psi ( \bar X^1_t) ] = \int_{t}^{t+h} \E L_s \psi ( \bar X^1_s) ds .$$
Using the Sobolev embedding theorem and  Lemma \ref{prop:5},  since $ |\bar X_s|  \le \bar C_T$ for all $s \le T, $ 
$$ | L_s \psi ( \bar X_s) | \le  \| L_s \psi \|_{C^{0 , p+\alpha} } ( 1 + |\bar X_s|^{p+\alpha} ) \le C_T \| L_s \psi \|_{1, p+\alpha } \le C_T \| \psi\|_{ 2, p} $$
implying that 
$$ |  g_{t+h } (\psi ) - g_t ( \psi ) | \le C_T h \| \psi\|_{2, p },$$
which concludes the proof. 
\end{proof}

We continue this section with the
\begin{proof}[Proof of Proposition \ref{prop:proprietegt}]
Firstly, since $g_0$ is of compact support,  $g_t$ is of compact support as well, for any fixed $t \geq 0.$ Moreover, 
similar arguments as those used in \cite[Theorem 2]{aaee} yield the explicit expression
\begin{equation}\label{eq:density1}
 g_t (y) = \frac{1}{h}e^{ \int_{\beta_t(y)} ^t ( \alpha - f ( \varphi_{\beta_t(y) , s} (0) ) ) ds} \indiq_{\{ y < \varphi_{0, t } (0) \}} + 
 e^{ \int_0^t  ( \alpha - f ( \varphi_{s,t }^{-1} (y)) ) ds } g_0 \circ \varphi_{0, t}^{-1} (y) \indiq_{\{ y \geq \varphi_{0, t } (0) \}},
\end{equation} 
with 
\begin{equation}\label{eq:density2}
 \varphi_{0, t}^{-1} (y)   = (y- \varphi_{0, t } (0) ) e^{ \alpha t }   \mbox{ and } \varphi_{s,t }^{-1} (y) = \varphi_{0, s } \circ \varphi_{0, t}^{-1}   (y) ,  \; y \geq \varphi_{0, t } (0) ,
\end{equation} 
and $ \beta_t (y) $ the unique real in  $]0 , t]$  satisfying 
\begin{equation}
\varphi_{ \beta_t(y ) , t } (0 ) = y,
\end{equation}
for any $ y < \varphi_{0, t } (0) .$ 

The above representation implies in particular that $ g_s (f) > 0 $ for all $ s \geq 0, $ since $ f (x) > 0 $ for all $ x >0.$ The function $ s\mapsto g_s (f) $ being continuous and strictly positive, we deduce that for any fixed $t>0,$ the function $ [0, t ] \ni s \mapsto \varphi_{s, t } (0) = \int_s^t e^{ - \alpha (t-u) } g_u (f) du $ is strictly decreasing and differentiable. As a consequence, its inverse function $ \beta_t $ is differentiable as well. Therefore, for any fixed $ t > 0, $ the Lebesgue density $g_t(y) $ is differentiable at every point $ y \neq \varphi_{0, t } (0) .$ The fact that 
$$
s \mapsto \int_0^\infty (1+ x^p ) |  g_s'(x) | dx \mbox{ is locally bounded}
$$
follows easily from this. 
\end{proof}

We finally give the 
\begin{proof}[Proof of Proposition \ref{prop:Pst}]

{\bf Step 1.} Before starting the proof, let us first mention that a simple change of variables formula implies that for any fixed $ s < t , $ the mapping $ \psi \mapsto [ x \mapsto \psi \circ \varphi_{s, t } (x) ] $ is continuous from $ \W_0^{6, p } \to \W_0^{6, p } ,$ where we recall that $ \varphi_{s, t} (x) = e^{ - \alpha ( t-s) } x + h \int_s^t g_u(f) du .$ Moreover we have 
$$ \| \psi \circ \varphi_{s, t }\|_{6,p} \le  C_T \| \psi \|_{6, p}, $$
for all $ s \le t \le T.$

{\bf Step 2.} Introduce now for any $ 0 \le s \le t$ and $ x \geq 0$  the process
\begin{equation}
 \bar Y_{s, t } ( x) =x + \int_s^t \left( h g_u (f) - \alpha \bar Y_{s, u } (x)) \right) du - \int_{]s, t ] \times \R_+} Y_{s, u-} (x) \indiq_{\{ z \le 1  \}} \pi^1 (du, dz ) ;
\end{equation} 
that is, $ \bar Y_{s, t } (x) $ follows the same dynamic as $Y_{s, t } (x), $ but jumps occur at constant rate $1.$ We still have the upper bound 
\begin{equation}\label{eq:boundbarY}
 \bar Y_{s, t } ( x) \le x  + \bar C_T, \mbox{ for all } s \le t \le T.
\end{equation} 
Let us write for short 
$$ \Pi_t =  \int_{[0, t ] \times \R_+}  \indiq_{\{ z \le 1  \}} \pi^1 (du, dz ) ,$$
that is, $(\Pi_t)_t $ is the Poisson process having intensity $1$ governing the jumps of $ \bar Y.$ Write $T_1 < T_2 < \ldots < T_n < \ldots $ for the successive jumps of $ (\Pi_t)_t.$ Then Girsanov's theorem for jump processes, see \cite{jacod75}, implies that 
$$ P_{s, t } \psi (x) = \E \psi ( Y_{s, t } (x) ) = \E \left( \psi ( \bar Y_{s, t } (x) ) \prod_{n :  T_n \in ]s, t ] } f ( \bar Y_{s, T_n -}(x) )e^{ - \int_s^t  [f( \bar Y_{s, u} ( x) )  - 1 ] du }\right)  .$$  
We notice that for all $ t < T_1 ( s) := \inf \{ T_n : T_n > s \} , $ $ \bar Y_{s, t } (x)  = \varphi_{s,t } ( x) .$ Therefore, 
\begin{multline*}
 P_{s, t } \psi (x)  = \psi ( \varphi_{s,t } (x) ) e^{ - \int_s^t (f ( \varphi_{s, u } ( x)) - 1 )  du }  \P ( t < T_1 (s) ) \\
 + e^{t-s} \E \left( f (\varphi_{s,T_1(s) }  (x) ) e^{ - \int_s^{T_1(s) }  f ( \varphi_{s, u } ( x)  ) du } R_{T_1(s) ,t} (\psi) ; t \geq T_1 ( s) \right) ,
\end{multline*} 
where 
$$  R_{T_1(s) ,t} (\psi)  = \psi ( \bar Y_{s, t} (x) ) \prod_{n :  T_n \in ]T_1(s) , t ] } f ( \bar Y_{s, T_n -}(x) )e^{ - \int_{{T_1(s) }}^t  f( \bar Y_{s, u} ( x) )   du}  .$$ 
Using the strong Markov property at time $T_1(s) $ and the fact that at time $ T_1 ( s), $ $ \bar Y_{s, T_1(s) } ( x) = 0 $ is reset to $0 $ and thus forgets its starting position $x$ at this time, we obtain 
\begin{multline*} 
\E \left( f (\varphi_{s,T_1(s) }  (x) ) e^{ - \int_s^{T_1(s) }  f ( \varphi_{s, u } ( x)  ) du } R_{T_1(s),t } (\psi)  ; t \geq T_1 ( s)  \right)\\
= 
\int_0^{t-s} e^{- v} f (\varphi_{s, s+v }  (x) ) e^{ - \int_s^{s+v}  f ( \varphi_{s, u } ( x)  ) du }  \bar R_{v, t} (\psi)   dv , 
\end{multline*} 
where
$$ \bar R_{v, t} (\psi)  = \E  \left(  \psi ( \bar Y_{s+v , t} ( 0 ) )  \prod_{n :  T_n \in ]s+ v , t ] } f ( \bar Y_{s+ v, T_n -}(0) )e^{ - \int_{s+v }^t  f( \bar Y_{s+v , u} ( 0) )   du} \right) .$$ 
Summarizing, we have 
\begin{eqnarray*}
 P_{s, t } \psi (x)&= &\psi ( \varphi_{s,t } (x) ) e^{ - \int_s^t f ( \varphi_{s, u } ( x)  ) du }  +e^{t-s}  \int_0^{t-s} e^{- v}f  (\varphi_{s,s+v }  (x) ) e^{ - \int_s^{s+v}  f ( \varphi_{s, u } ( x)  ) du } \bar R_{v, t } (\psi)  dv \\
 &:=&  P_{s, t }^1 \psi (x) + P_{s, t }^2 \psi ( x), 
\end{eqnarray*} 
where, using \eqref{eq:boundbarY},  
\begin{equation}\label{eq:goodbound1}
 \sup_{v \le t -s} |\bar R_{v,t } (\psi)| \le  C_t \| \psi \|_{1, p }. 
\end{equation} 

Clearly, $\psi \in C^6 $ implies that $ P_{s, t } \psi \in C^6 $ as well, since $f \in C^6.$ It is also clear at this stage that $ \psi \in C^6_c $ implies 
$P^1_{s, t } \psi \in C^6_c, $ having a support that depends on $ s$ and $t. $ 

{\bf Step 3.} 
We now investigate the dependence on $x$ of the first six derivatives of $ P_{s,t } \psi ( x) $ with respect to $x.$ Firstly, recalling \eqref{eq:goodbound1}, 
$$ \| P_{s, t } \psi  \|_{0, p } \le C_t \| \psi \|_{1, p }.$$
Let us now study the successive derivatives of $ P^1_{s, t} \psi.$ We have
$$ \frac{\partial}{\partial x} P_{s, t}^1 \psi ( x) = \left[ \psi '  ( \varphi_{s,t } (x) ) e^{ - \alpha ( t-s) } -  \psi( \varphi_{s,t } (x) )( \int_s^t f' ( \varphi_{s, u } (x) )e^{ - \alpha (u-s)}   du)  \right] e^{ - \int_s^t f ( \varphi_{s, u } ( x)  ) du } .$$
Since by Step 1, $ \| \psi \circ \varphi_{s, t }\|_{1,p} \le  C_T \| \psi \|_{1, p}, $ we only have to investigate the second term of the above expression. We use that $f$ is non-decreasing, that $ f' ( x) \le C (1 + x^\alpha ) ,$ $ f( x) \geq c_1 x^\beta \indiq_{\{ x \geq K\}}$ and that for all $ s \le u \le t \le T, $ 
$$ \varphi_{s, u } ( x) \geq e^{ - T} x \mbox{ such that  } f ( \varphi_{s, u} ( x) ) \geq f( e^{-T} x ) \geq c_1 e^{ - \beta T} x^\beta \indiq_{\{  x \geq e^T K\}}, $$
to deduce the upper bound 
$$ e^{ - \int_s^t f( \varphi(_{s, u } (x) ) du }  \le e^{ - c_1 e^{- \beta T} (t-s) x^\beta  }  \indiq_{\{  x \geq e^T K\}} + \indiq_{\{  x < e^T K\}},$$
implying 
$$ \sup_x  \left(\left(\int_s^t  | f' ( \varphi_{s, u } (x) )| e^{ - \alpha (u-s)}  du\right)  \; e^{ - \int_s^t f ( \varphi_{s, u } ( x)  ) du } \right) = C_T < \infty  $$ 
and thus
$$ \| P_{s, t}^1 \psi \|_{1, p } \le C_T \| \psi\|_{1, p}.$$ 
Similar arguments give 
$  \| P_{s, t}^1 \psi \|_{6, p } \le C_T \| \psi\|_{6, p}.$

Finally, the same arguments as above give that for all $ s \le t, $ 
$$ x \mapsto  \int_0^{t-s} e^{- v}f  (\varphi_{s,s+v }  (x) ) e^{ - \int_s^{s+v}  (f ( \varphi_{s, u } ( x)  ) du } =: F_{s, t } ( x) \in C^6_b $$ 
and for all $ \gamma > 0 $ and all $ 0 \le k \le 6, $  
$$\lim_{x \to \infty } x^\gamma | F_{s, t }^{(k)} ( x) |  = 0$$ 
such that 
$$  \| P_{s, t}^2 \psi \|_{6, p } \le C_t \| \psi\|_{6, p} \mbox{ and even }  \| x^\gamma P_{s, t}^2 \psi \|_{6, p } \le C_t ( \gamma)  \| \psi\|_{6, p} ,$$
where $ x^\gamma P_{s, t}^2 \psi  $ denotes the function $ x \mapsto x^\gamma P_{s, t } \psi ( x) .$ This concludes the proof. 
\end{proof}

\textbf{Acknowledgments.}
This work has been conducted as part of  the  FAPESP project Research, Innovation and Dissemination Center for Neuromathematics(grant 2013/07699-0) and of the ANR project ANR-19-CE40-0024.

\end{document}